\newcommand{\nocontentsline}[3]{}
\newcommand{\tocless}[2]{\bgroup\let\addcontentsline=\nocontentsline#1{#2}\egroup}
\definecolor{MutedBlue}{RGB}{40,90,160}
\newcommand{\Z}{\mathbb{Z}}
\newcommand{\R}{\mathbb{R}}
\newcommand{\Q}{\mathbb{Q}}
\def\C{\mathbb{C}}
\newcommand{\CP}{\mathbb{C}P}
\newcommand{\rank}{\operatorname{rank}}
\let\int\relax
\newcommand{\int}{\mathring}
\DeclareMathOperator{\lk}{{lk}}
\DeclareMathOperator{\writhe}{writhe}
\DeclareMathOperator{\rot}{rot}
\DeclareMathOperator{\tb}{tb}
\newtheoremstyle{thm}{2.5 pt}{2.5 pt}{\itshape}{}{\bfseries}{.}{.5em}{}
\theoremstyle{thm}
\newtheorem{theorem}{Theorem}[section]
\newtheorem*{theorem*}{Theorem} 
\newtheorem{corollary}[theorem]{Corollary}
\newtheorem{proposition}[theorem]{Proposition}
\newtheorem*{question*}{Question} 
\newtheorem*{thom*}{Theorem \thetheorem~(Thom Conjecture)}
\newtheorem{problem}[theorem]{Problem}
\newtheorem*{mingenusprob*}{The Minimal Genus Problem}
\newtheoremstyle{def}{2.5 pt}{2.5 pt}{}{}{\bfseries}{.}{.5em}{}
\theoremstyle{def}
\newtheorem{definition}[theorem]{Definition}
\newtheorem{remark}[theorem]{Remark}
\newtheorem{example}[theorem]{Example}
\newtheorem{exercise}[theorem]{Exercise}
\begin{document}




	\noindent \textbf{\huge A knot-theoretic tour of dimension four}  
	
	\smallskip

		\textbf{\large M\'arton Beke and Kyle Hayden}

\vspace{-1ex}
		
		\emph{\large Singularities and Low Dimensional Topology Winter School\\ January 23-27, 2023} 
	
	


	\setlength{\parskip}{0pt}
	
	\bigskip
	
	{\small \tableofcontents}
	
\section*{\textbf{Introduction}}	
	
	
		\setlength{\parskip}{.5\baselineskip plus 2pt}

These notes  follow a lecture series at the \emph{Singularities and low dimensional topology} winter school at the  R\'enyi Institute in January 2023, with a target audience of graduate students in singularity theory and low-dimensional topology. The lectures  discuss the basics of four-dimensional manifold topology, connecting this rich subject to knot theory on one side and to contact, symplectic, and complex geometry (through Stein surfaces) on the other side of the spectrum. For a more complete treatment of the topics discussed, we refer the reader to the beautiful books by Gompf and Stipsicz  \cite{gompfstipsicz} and Saveliev \cite{saveliev}. For the complete collection of lecture notes from the winter school, see \cite{singularities:book}.

	We will begin by investigating the basic anatomy and algebraic topology of 4-manifolds. The first lecture will focus on using handle diagrams to determine algebro-topological data, and the second lecture will focus on the intersection form of a 4-manifold.  As motivation for these first two lectures, our goal will be to assemble the ingredients of the following:

	\begin{theorem*}[Freedman \cite{freedman}]\label{thm:non-smoothable}
		There exists a closed, simply connected, topological 4-manifold that does not admit a smooth structure.
	\end{theorem*}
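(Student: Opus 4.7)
The plan is to exhibit a specific closed, simply connected topological 4-manifold and then use its intersection form to obstruct any smooth structure. The argument pairs two deep inputs: a topological realization theorem of Freedman (which produces closed, simply connected 4-manifolds from algebraic data) and Rokhlin's theorem (which constrains the signature of smooth, closed spin 4-manifolds to be divisible by $16$). The bridge between these is the standard fact that a simply connected 4-manifold whose intersection form is even admits a spin structure.

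Concretely, I would first choose my algebraic input. I want a unimodular, symmetric, \emph{even} bilinear form over $\Z$ whose signature is not divisible by $16$. The $E_8$ form---the unique positive-definite, even, unimodular form of rank $8$---has signature $8$ and so qualifies. Next, I would apply Freedman's theorem to realize $E_8$ as the intersection form of some closed, simply connected topological 4-manifold $M_{E_8}$. Finally, I would argue by contradiction: if $M_{E_8}$ carried a smooth structure, then since it is simply connected with an even intersection form, it would be spin; Rokhlin's theorem would then force $16 \mid \sigma(M_{E_8}) = 8$, which is absurd. Hence $M_{E_8}$ admits no smooth structure.

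The main obstacle is not the logical skeleton above, which is quite short, but the two black-box theorems it relies on. Freedman's realization theorem rests on the full force of topological surgery in dimension four, in particular the disk embedding theorem, while Rokhlin's theorem requires a spin-geometric input such as the index theorem for the Dirac operator (or its original Pontryagin-Thom-flavored proof). In the context of these notes, the right strategy is to quote both theorems and instead spend the preceding lectures developing the intersection form and spin structures carefully enough that each step of the short deduction---"$E_8$ is even and unimodular," "even implies spin in the simply connected case," and "Freedman's theorem realizes $E_8$"---is genuinely meaningful to the reader.
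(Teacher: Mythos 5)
Your proposal is correct and reaches the same contradiction via Rohlin's theorem, but it invokes a different theorem of Freedman than the paper does. You cite Freedman's realization theorem directly: every even unimodular form (in particular $E_8$) is the intersection form of some closed, simply connected topological 4-manifold. The paper instead builds the manifold explicitly. It starts with the \emph{smooth} $E_8$-plumbing $X$ (a compact 4-manifold with boundary), observes that $\partial X$ is an integer homology 3-sphere (the Poincar\'e sphere) because $E_8$ is unimodular, and then applies Freedman's theorem that every $\Z HS^3$ bounds a compact, contractible \emph{topological} 4-manifold $C$. Gluing $X \cup -C$ yields the desired closed, simply connected topological manifold with intersection form $E_8$. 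Both routes ultimately rest on Freedman's topological surgery machinery, and the realization theorem you quote is in fact proved via exactly this ``cap off with a contractible piece'' strategy, so the two arguments are close cousins. The paper's presentation has the pedagogical advantage of making the resulting manifold concrete---one can see that a smooth $E_8$-plumbing occupies most of it and only the contractible cap is genuinely topological---whereas your version is a cleaner black-box deduction. One small point worth making explicit in either version: Rohlin's theorem as stated applies to spin 4-manifolds, and the bridge ``simply connected with even $Q_X$ $\Rightarrow$ spin'' that you mention is needed; the paper records this in a remark after Theorem~\ref{thm:rohlin}.
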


	The third lecture will focus on studying surfaces in 4-manifolds and the minimal genus problem, which asks for the minimal genus of an embedded surface representing a fixed second homology class. This will culminate in a sketch of the following landmark theorem:

	\begin{theorem*}[Donaldson \cite{donaldson}, Freedman \cite{freedman}]\label{thm:exoticR4}
		There exist smooth 4-manifolds that are homeomorphic but not diffeomorphic to $\R^4$.
	\end{theorem*}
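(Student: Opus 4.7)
The plan is to combine the obstruction from Donaldson's theorem (which forbids certain smooth intersection forms) with Freedman's existence theorems (which produce topological manifolds and embeddings) to construct a topologically embedded $\R^4$ inside a smooth $4$-manifold whose inherited smooth structure cannot be diffeomorphic to standard $\R^4$. The argument runs in two stages: first a closed-manifold topological-versus-smooth mismatch, and then a localization of this mismatch to an $\R^4$-scale neighborhood.

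First, I would assemble the global mismatch. From the previous lecture we have the closed, simply connected topological $4$-manifold $\overline{|E_8|}$ with intersection form $-E_8$, which admits no smooth structure by Donaldson's diagonalization theorem. Take as smooth $4$-manifold $X = \CP^2 \# 9\overline{\CP^2}$, whose intersection form $\langle 1\rangle \oplus 9\langle -1\rangle$ is odd, indefinite, of rank $10$ and signature $-8$. By Serre's classification of indefinite unimodular forms over $\Z$, this form is isomorphic to $-E_8 \oplus \langle 1\rangle \oplus \langle -1\rangle$, so Freedman's homeomorphism classification of simply connected closed topological 4-manifolds yields
\[
h \colon \overline{|E_8|} \#_{\t{top}} \CP^2 \#_{\t{top}} \overline{\CP^2} \xrightarrow{\;\approx\;} X.
\]
The connect-sum $3$-sphere on the left is carried by $h$ to a locally flat topologically embedded $3$-sphere $\Sigma_0 \subset X$. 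If $\Sigma_0$ were topologically ambient isotopic inside $X$ to some smoothly embedded $3$-sphere $\Sigma'$, then capping off the bounded region of $\Sigma'$ by a smooth $4$-disk would produce a smooth closed $4$-manifold homeomorphic to $\overline{|E_8|}$, contradicting Donaldson. Thus $\Sigma_0$ is unavoidably wild inside the smooth manifold $X$.

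Next, I would localize this obstruction. By Freedman's topological handle theory---specifically the theorem that a Casson handle is homeomorphic to an open $2$-handle $D^2 \times \R^2$---one can extract an open subset $V \subset X$ with $V \supset \Sigma_0$ that is homeomorphic to $\R^4$. Endowed with the smooth structure inherited from $X$, this $V$ is homeomorphic to $\R^4$, yet no diffeomorphism $V \cong_{\t{diff}} \R^4$ can exist: under such a diffeomorphism $\Sigma_0$ becomes a topologically embedded $3$-sphere in a standard $\R^4$, and hence is topologically ambient isotopic in $V$ to any smoothly embedded round $3$-sphere enclosing it; transporting this isotopy into $X$ would produce a smooth $3$-sphere topologically isotopic to $\Sigma_0$, contradicting the previous step. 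Hence $V$ is an exotic $\R^4$.

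The main obstacle is the localization. The closed-manifold mismatch is essentially formal, packaging Donaldson's theorem and Serre's classification together with Freedman's homeomorphism classification. The deep input is extracting an open topological $\R^4$ inside $X$ that engulfs the wild $\Sigma_0$; this is where Freedman's full surgery-theoretic machinery---in particular the homeomorphism between Casson handles and standard open $2$-handles---is required, converting the global smooth-vs.-topological discrepancy into a genuine exotic smooth structure on $\R^4$.
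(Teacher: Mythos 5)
Your proposal aims to reconstruct the historical Donaldson--Freedman route to exotic $\R^4$, which is not the argument the paper gives. The paper's sketch is knot-theoretic: it takes a knot $K$ that is topologically slice (by Freedman's $\Delta_K\equiv 1$ criterion) but not smoothly slice (here $P(3,-5,-7)$, via the Thom conjecture), embeds the $0$-trace of $K$ topologically into $\R^4$ along a topological slice disk, pushes forward the standard smooth structure on the trace, and uses Quinn's smoothing theorem to extend this smooth structure over the noncompact complement. The obstruction to standardness is then a Fox--Milnor observation: the resulting smooth copy of $\R^4$ contains a piecewise-linear $2$-sphere whose single singularity is a cone on $K$, so a diffeomorphism to standard $\R^4$ would force $K$ to be smoothly slice. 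This sidesteps entirely the localization step that you correctly identify as the crux of your approach.

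Unfortunately, your localization step contains a genuine error, not merely a gap. You assert that there is an open set $V\subset X$ with $V\supset\Sigma_0$ and $V$ homeomorphic to $\R^4$; no such $V$ can exist. If $V$ were homeomorphic to $\R^4$ and $\Sigma_0\subset V$ were a locally flat $3$-sphere, then by the topological Schoenflies theorem (Brown) $\Sigma_0$ would bound a compact topological $4$-ball $D$ inside $V$. But $\Sigma_0$ separates the closed manifold $X$ into two compact pieces $A$ and $B$, homeomorphic respectively to $\overline{|E_8|}$ and $\CP^2\#\overline{\CP}^2$ with an open ball removed; the set $\mathring{D}$ is nonempty, open by invariance of domain, and closed in whichever component of $X\setminus\Sigma_0$ contains it, so $D$ would have to equal $A$ or $B$. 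Both have nontrivial $H_2$, so neither is a ball, a contradiction. Your Stage~1 is essentially correct --- Freedman's classification, Serre's classification of indefinite forms, and Donaldson's diagonalization theorem do combine to show that the separating sphere cannot be isotoped to a smooth one --- but that fact does not hand you an open $\R^4$ engulfing it. The actual Donaldson--Freedman construction of an exotic $\R^4$ via Casson handles proceeds by a different and substantially more delicate mechanism, and the exoticity of the resulting $\R^4$ is detected not by an embedded bad $3$-sphere but by the non-existence of smooth compact engulfments of a suitable compact subset.
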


	The fourth and final lecture will offer a brief look at contact structures on 3-manifolds and Stein structures on 4-manifolds. Constraints on smoothly embedded surfaces (in the form of an adjunction inequality) will provide a key tool for the minimal genus problem.

	\subsubsection*{Conventions} Unless otherwise stated, homology is taken with $\Z$ coefficients. We will typically assume our manifolds are connected, except when considering disconnected $n$-manifolds as boundaries of connected $(n+1)$-manifolds (e.g., in the context of cobordism). The symbol $\cong$ will be used to denote whatever notion of equivalence is most natural in a given context (e.g., diffeomorphism for smooth manifolds, homeomorphism for topological manifolds, isomorphism for groups or lattices, etc).
	
	\bigskip
	
	\medskip
	
	\section*{\textbf{Lecture 1: Handles, homotopy, and homology}}
	\addtocounter{section}{1}
	
	\medskip

	
	
{1.0.\hspace{1pt} \textbf{The goal.}} 	This lecture will focus on describing 4-manifolds and understanding their basic algebraic topology in terms of handle decompositions. As motivation for the first two lectures, our goal will be to assemble the ingredients of the following:
	
	\begin{theorem}[Freedman \cite{freedman}]\label{thm:non-smoothable}
		There exists a closed, simply connected, topological 4-manifold that does not admit a smooth structure.
	\end{theorem}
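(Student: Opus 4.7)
The plan is to produce the $E_8$-manifold explicitly via Freedman's classification and then obstruct the existence of a smooth structure on it by a signature/parity argument. The strategy splits into a topological \emph{existence} step and a differential-topological \emph{obstruction} step.

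For existence, I would first develop (in Lecture 2) the intersection form $Q_X$ of a closed, oriented 4-manifold $X$, viewed as a symmetric unimodular bilinear form on $H_2(X;\Z)$. I would then invoke Freedman's classification theorem, which realizes every such symmetric unimodular form as the intersection form of some closed, simply connected, topological 4-manifold. Applied to the unique positive-definite, even, unimodular lattice of rank $8$---namely $E_8$---this produces a closed, simply connected, topological 4-manifold $M$ with $Q_M \cong E_8$.

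For the obstruction, suppose $M$ admits a smooth structure. Since $E_8$ is an \emph{even} form, the link between the parity of $Q_X$ and the second Stiefel--Whitney class $w_2(X)$ (to be established in Lecture 2) forces $M$ to be spin. Rokhlin's theorem then asserts that the signature of any closed smooth spin 4-manifold is divisible by $16$; but $\sigma(E_8) = +8$, contradiction.

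The main obstacle is Freedman's existence result itself: its proof lies well outside the elementary handle-theoretic and intersection-form tools developed in these lectures, requiring Casson handles and an intricate infinite-process topological construction with no smooth counterpart. I would take it as a black box. The obstruction half is comparatively easy: the identification of evenness of $Q_X$ with spin-ness falls out of the intersection-form framework, and Rokhlin's theorem can be stated and attributed. A more powerful alternative is Donaldson's diagonalization theorem (forthcoming in Lecture 3)---any closed smooth simply connected 4-manifold with definite intersection form has diagonalizable form over $\Z$, whereas $E_8$ is manifestly non-diagonalizable.
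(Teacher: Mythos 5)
Your proof is correct, and the obstruction half (Rohlin's theorem applied to the even form $E_8$ with signature $8$) matches the paper's exactly. However, the existence half takes a genuinely different route. You invoke Freedman's full realization theorem — every unimodular symmetric bilinear form over $\Z$ arises as the intersection form of some closed, simply connected topological 4-manifold — as a black box. The paper instead builds the manifold concretely: it starts with the smooth $E_8$-plumbing manifold $X$ (which it has in hand via an explicit Kirby diagram), notes that $\partial X$ is the Poincar\'e homology sphere because $E_8$ is unimodular, and then caps off using a different Freedman black box, namely that every integer homology 3-sphere bounds a compact contractible \emph{topological} 4-manifold (Theorem~\ref{thm:contractible}). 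The closed manifold is then $X \cup_{\partial} (-C)$, and since $C$ is contractible its intersection form is still $E_8$. Both routes lean on deep Freedman results, but the paper's choice has pedagogical advantages: the relevant Freedman input is a narrower statement, the resulting manifold is concrete (smooth away from the contractible topological cap, which localizes exactly where smoothness must fail), and the ``cap off a $\Z HS^3$'' move immediately feeds into the discussion of the Rohlin invariant $\mu(Y)$ in the addendum. Your observation that Donaldson diagonalization gives an alternative obstruction is also correct and is the content of a later exercise in the notes.
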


	\subsection{Handles, homotopy, and homology}\label{subsec:homology}
	Let $X$ be a smooth 4-manifold equipped with a handle decomposition.  Recall that an \textit{$n$-dimensional $k$-handle} is a copy of $D^k \times D^{n-k}$, which may be attached to the boundary of an $n$-manifold $X$ by fixing an embedding $\partial D^k \times D^{n-k} \hookrightarrow \partial X$. A \textit{handle decomposition} of $X$ is a decomposition of $X$ into a union of $n$-manifolds $X_0 =\emptyset  \subset X_1 \subset X_2 \subset \cdots$ such that $X_i$ is obtained from $X_{i-1}$ by a handle attachment.

	\begin{remark}[Anatomy of a handle]
		The \textit{core} of a $k$-handle $h=D^k\times D^{n-k}$ is the central $D^k\times 0$, its boundary is  the \textit{attaching sphere} $S^{k-1}\times 0$, and $S^{k-1}\times D^{n-k}$ is the \textit{attaching region}.
		Dually, we call $0 \times D^{n-k}$ the \textit{cocore}, its boundary $0 \times S^{n-k-1}$ the \textit{belt sphere}, and the corresponding component $D^k \times S^{n-k-1}$ of $\partial h$ is called the \textit{belt region}.
	\end{remark}

	For this lecture, let $X$ be a smooth, compact 4-manifold equipped with a (finite) handle decomposition. As a matter of  convenience, we will assume that $X$ is connected, hence admits a handle decomposition that has a unique 0-handle. Moreover, we will assume that handles are attached in order of increasing index and that all handles of a given index are attached simultaneously. We will be focusing on 4-manifolds built without 3-handles, but we note that if a closed 4-manifold contains a single 4-handle,  then the 3-handles are essentially uniquely determined by the handles of index $\leq 2$  \cite{LP72}. 
	
	Our first goal is to use the handle decomposition to read off the algebraic topology of $X$.  The key insight is that the handle decomposition ``collapses'' onto a cell decomposition for a topological space homotopy equivalent to the manifold $X$. Roughly speaking, each $k$-handle $D^k \times D^{4-k}$ collapses onto the $k$-cell $D^k \times 0$ (i.e., the handle's core disk), dragging all later cell attachments along by homotopy.

	\begin{example}\label{ex:wedge} 
		If $X$ consists only of one 0-handle and $m$ handles of index $k=1$ or $k=2$, then $X$ is homotopy equivalent to the $m$-fold wedge of $k$-spheres $\vee^m S^k$. (Collapse the 0-handle to a point, and all attaching regions $\partial D^k \times D^{4-k}$ collapse to that point, too.) Note that each $S^k$ can be viewed as the union of the core $k$-disk of a $k$-handle and a (singular) $k$-disk formed as the cone of the attaching sphere inside $B^4= \operatorname{Cone}(S^3)$;  this is depicted schematically for $m=1$ and $k=2$ in Figure~\ref{fig:trace-with-cone}. Alternatively, it will often be useful to replace this singular $k$-disk with a smooth $k$-manifold in $B^4$ bounded by the attaching sphere in $S^3$.

		\begin{figure}[!t]
			\center
			\def\svgwidth{.95\linewidth} 
			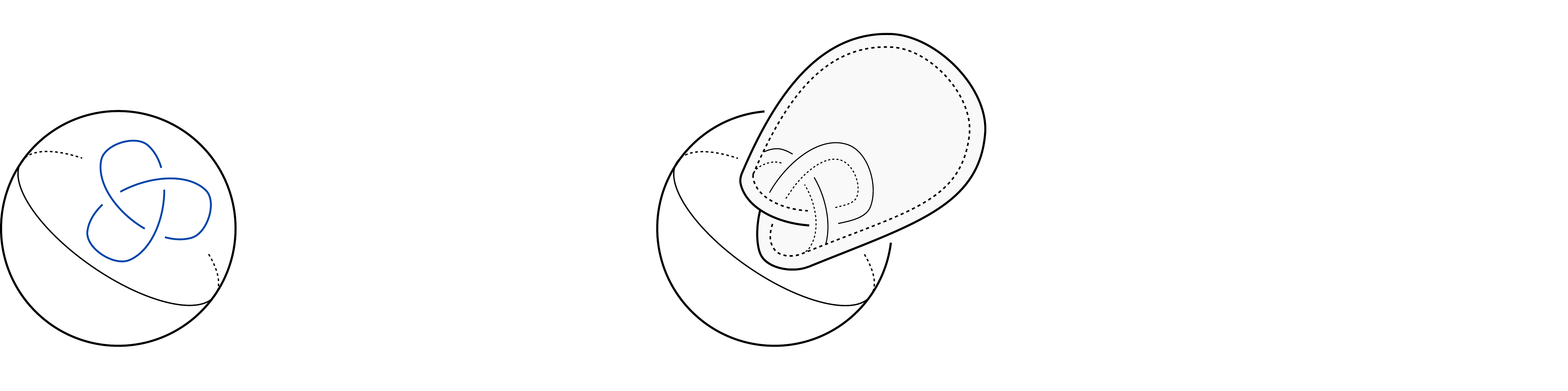
			
			\medskip
			
			\caption{Exhibiting a singular  2-sphere inside a 4-manifold built from one 0-handle and one 2-handle, known as a \emph{knot trace} (cf. Example~\ref{ex:wedge}).}\label{fig:trace-with-cone}
		\end{figure}
	\end{example}

	\begin{exercise}\label{exer:trace}
		The 4-manifold obtained by attaching an $n$-framed 2-handle to $B^4$ along a knot $K$ is commonly called the \emph{$n$-trace of $K$} and denoted $X_n(K)$. Using handle slides and cancellations, show that the 4-manifold depicted on the left side of Figure~\ref{fig:not942} (which uses dotted circle notation for  1-handles, as in \cite[\S5.4]{gompfstipsicz}) can be expressed as a 0-framed knot trace in two different ways.  \emph{(One of these knots is known as $9_{42}$ in the knot tables, while the other is identified as \texttt{o9\_{34801}} in  SnapPy  \cite{snappy}.)}
	\end{exercise}

	\begin{figure}[h]\center
		\def\svgwidth{.8\linewidth} 
		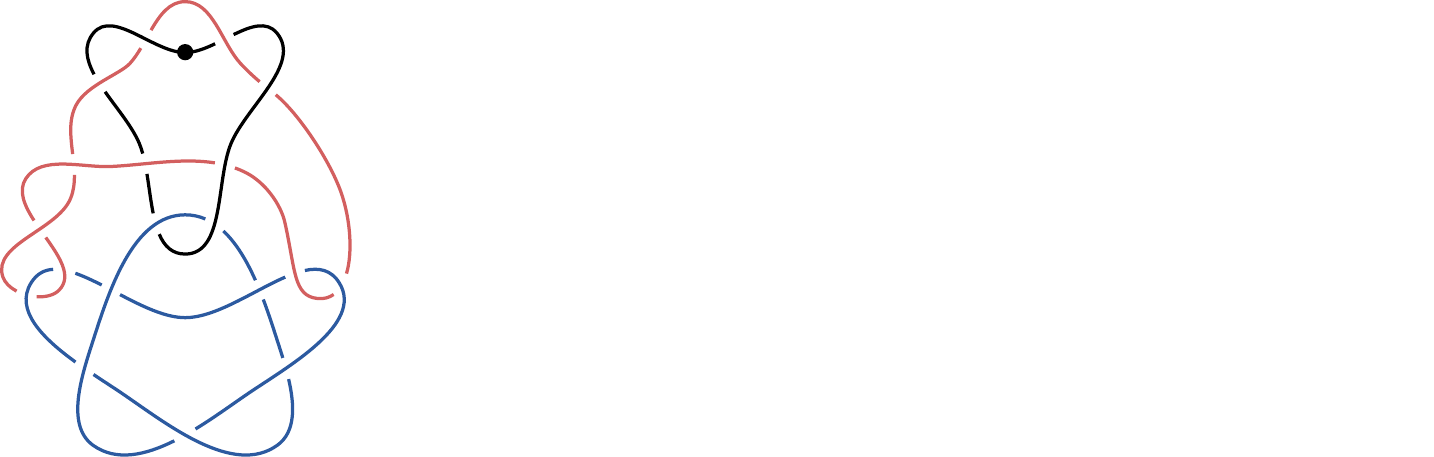 
		\caption{Kirby diagrams for a 4-manifold that can be expressed as the 0-trace of two different knots.}\label{fig:not942}
	\end{figure}
	
	Next, we consider a collection of 4-manifolds that will serve as running examples for our discussion.

	\begin{figure}
		\center
		\def\svgwidth{.675\linewidth} 
		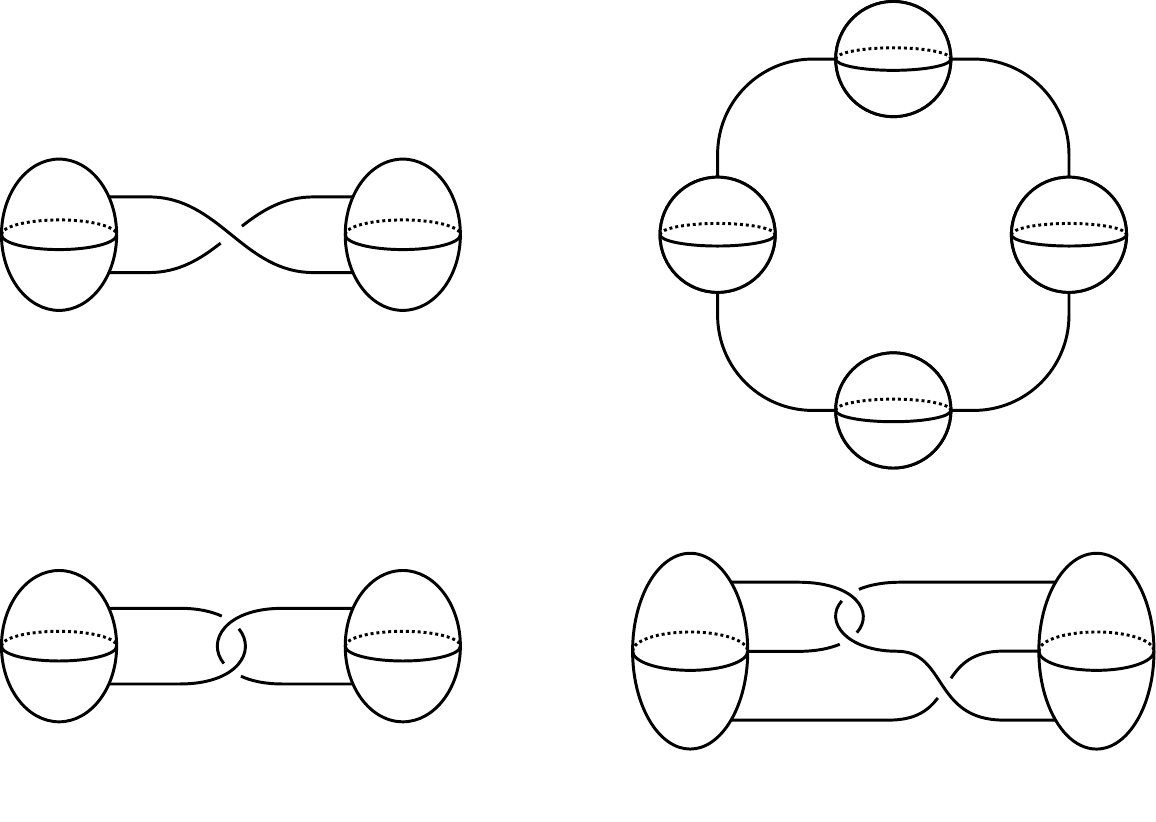
		\caption{Handle diagrams for the 4-manifolds in Examples~\ref{ex:running-ex}, \ref{ex:1-handle-examples}, and \ref{ex:H2}.} \label{fig:1-handle-examples}
	\end{figure}

	\begin{example}\label{ex:running-ex}
		Consider the 4-manifolds represented by the Kirby diagrams in Figure~\ref{fig:1-handle-examples}: {(a)} the $D^2$-bundle over $\mathbb{R}P^2$ of Euler number $n$; {(b)} the $D^2$-bundle over $T^2$ of Euler number $n$; {(c)} a self-plumbing of $S^2 \times D^2$, which may be viewed as regular neighborhood of an immersed 2-sphere with square zero in a 4-manifold; {(d)} a contractible 4-manifold known as a \textit{Mazur manifold}.
	\end{example}
	
	Now let $X$ be any smooth, connected 4-manifold with a handle decomposition that has a unique 0-handle. Moreover, assume that handles are attached in order of increasing index and that all handles of a given index are attached simultaneously.

	In general, $\pi_1(X)$ is determined by the union of handles of index $\leq 2$. After choosing orientations for the handles' attaching spheres, we can extract a presentation of $\pi_1(X)$ as follows.
	
	\begin{itemize}
		\item  \textit{Generators $\approx$ 1-handles:} Each generator is represented by an oriented \textit{core} curve that passes once over the given 1-handle (Figure~\ref{fig:1-handle-generator}).

		\item \textit{Relations $\approx$ 2-handles:} Each relation is determined by a 2-handle by expressing its oriented \textit{attaching} curve (up to homotopy) as a word in the 1-handles' core curves (i.e., the order and sign with which 1-handles are encountered as one traverses the attaching curve). 
	\end{itemize}
	
	\begin{figure}
		\center
		\includegraphics[width=.35\linewidth]{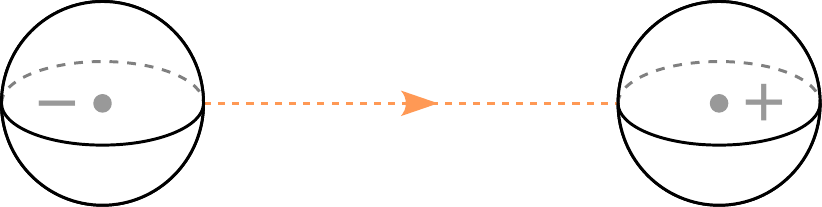}
		\caption{The two spheres are identified, and the other part of the circle travels through the core of the handle which we don't picture.}\label{fig:1-handle-generator}
	\end{figure}

	We note that  changing the attaching sphere's orientations will affect the presentation of the group but not its isomorphism type. Also note that the relations are independent of the framings of the 2-handles.

	\begin{example}\label{ex:1-handle-examples} For the 4-manifolds in Figure~\ref{fig:1-handle-examples}: {(a)} $\pi_1 \cong \langle x \mid x^2 \rangle \cong \Z/2\Z$ \ {(b)} $\pi_1 \cong \langle x,y \mid xyx^{-1} y^{-1}\rangle \cong \Z \oplus \Z$ \ {(c)} $\pi_1 \cong \langle x \mid x x^{-1} \rangle \cong \Z$ \ {(d)} $\pi_1 \cong \langle x \mid  x x^{-1} x \rangle \cong 1$
	\end{example}
	
	When calculating $\pi_1(X)$ from a handle diagram, it can be convenient to modify the 2-handles' attaching curves up to homotopy. For example, Figure~\ref{fig:homotopy} uses a crossing change to show that the attaching curve for the 2-handle in Figure~\ref{fig:1-handle-examples}(c) is nullhomotopic, hence contributes a trivial relation.
	
	\begin{figure}[b]
		\center
		\includegraphics[width=\linewidth]{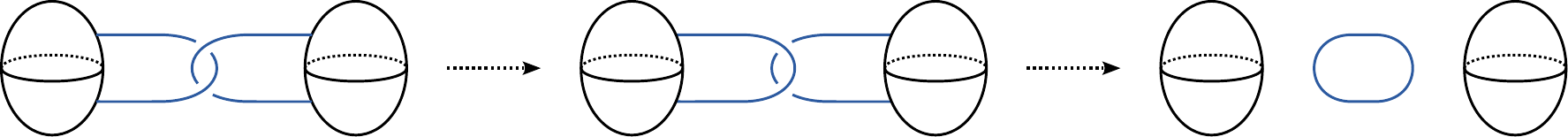}
		\caption{Modifying an attaching curve by a homotopy that involves a crossing change.}\label{fig:homotopy}
	\end{figure}

	\begin{exercise}  
		\begin{enumerate}
			\item [\normalfont\textbf{(a)}]  Use handle diagrams to build a 4-manifold $X$ such that $\pi_1(X)$ is nonabelian. 
			
			\item[\normalfont\textbf{(b)}] Prove that every finitely presented group arises as $\pi_1(X)$ for a smooth, compact, orientable 4-manifold $X$.
			
			\item [\normalfont\textbf{(c)}] Upgrade (b)  to achieve a \textit{closed} 4-manifold. (Hint: Don't start over.)
			
		\end{enumerate}
		
	\end{exercise}
	
	To calculate $H_1(X)$, we simply abelianize $\pi_1(X)$. The homology groups $H_k(X)$ with $k\geq 2$ can be calculated using cellular homology: the $k^\text{th}$ chain group $C_k(X)$ is generated by $k$-handles, and the boundary formula for a $k$-handle $h$  is given by
	$$\partial_k h = \sum (B_i \cdot A) \, h_i$$
	where the sum ranges over the $(k-1)$-handles $h_i$ and $A$ denotes the attaching sphere of the $k$-handle $h$, $B_i$ denotes the belt sphere of $h_i$, and $B_i \cdot A$ denotes  their signed intersection.
	
	\begin{example}\label{ex:H2} For the 4-manifolds in parts (a) and (d) of Figure~\ref{fig:1-handle-examples}, the groups $H_2(X)$ are trivial. In parts (b) and (c), we have $H_2(X) \cong \Z$. Note that, because these examples have no 3- or 4-handles,  the chain groups $C_3(X)$ and $C_4(X)$ are trivial; as a consequence, we have $H_2(X) \cong \ker \partial_2  \subset C_2(X)$. 
	\end{example}
	
	\begin{remark}
		We have now shown that the manifold $X$ in Figure~\ref{fig:1-handle-examples}(d) has $\pi_1(X)=1$ and $H_k(X)=1$ for all $k \geq 1$, hence applying  the Hurewicz and Whitehead  theorems implies that $X$ is contractible.
	\end{remark}

	\subsection{The intersection form} The most important algebro-topological invariant of a 4-manifold $X$ is its  cohomology ring, especially the multiplicative structure on second cohomology. 

	\begin{definition}
		Given a compact, orientable 4-manifold $X$, fix an orientation on $X$ by choosing a fundamental class $[X]$ generating $H_4(X,\partial X)\cong \Z$. The \textit{intersection form}
		of $X$ is the symmetric bilinear form 
		$$Q_X : H^2(X,\partial X) \otimes H^2(X,\partial X) \to \Z \qquad \quad (a,b) \mapsto \langle a \smile b, [X]\rangle. $$
	\end{definition}
	
	\begin{remark}
		Note that the cup product is symmetric on $H^2(X,\partial X;\Z)$. Analogous forms can be defined on $H^n$ for any $2n$-dimensional manifold; the form will be symmetric if $n$ is even and skew-symmetric if $n$ is odd.  
	\end{remark}
	
	The next result shows that intersection forms completely determine the homeomorphism type of closed, simply connected 4-manifolds.

	\begin{theorem}[Freedman \cite{freedman}]
		A pair of smooth, closed, simply connected 4-manifolds $X$ and $X'$ are homeomorphic $\iff$ $Q_{X_1}$ and $Q_{X_2}$ are isomorphic. 
	\end{theorem}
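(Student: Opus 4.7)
The plan is to handle the two directions separately. The forward implication ($\Rightarrow$) is routine functoriality: an orientation-preserving homeomorphism $f\colon X\to X'$ induces an isomorphism $f^*$ on $H^2$ that respects cup products and carries $[X']$ to $[X]$, hence intertwines $Q_X$ and $Q_{X'}$. All the substantive content lies in the reverse direction ($\Leftarrow$), and I would split it into two stages: first build a homotopy equivalence $X\simeq X'$ out of an isometry of intersection forms, and then upgrade that homotopy equivalence to a homeomorphism.

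For stage one, I would use the CW-structural rigidity of closed, simply connected $4$-manifolds. Such an $X$ has the homotopy type of a CW complex with a single $0$-cell, no $1$-cells (because $\pi_1=1$), exactly $b_2(X)$ $2$-cells assembled into a wedge $W\simeq \vee^{b_2} S^2$, and a single $4$-cell attached along a map $\phi\colon S^3\to W$. By Hilton's formula, $\pi_3(W)$ decomposes into copies of $\pi_3(S^2)$ (one per wedge factor, detecting Hopf invariants) and copies of $\pi_3(S^3)$ (one per unordered pair, detecting Whitehead products), and the class $[\phi]$ is encoded by precisely this data --- which is exactly the datum of the symmetric bilinear form $Q_X$ on $H^2$. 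Hence an isometry $Q_X\cong Q_{X'}$ matches the attaching maps and yields a homotopy equivalence realized by some map $f\colon X\to X'$.

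Stage two is where Freedman's deep input enters: one must deform $f$ to a homeomorphism. Conceptually this is a topological surgery problem, governed by the simply connected surgery exact sequence. Because both manifolds are smooth, the Kirby--Siebenmann obstruction vanishes; the algebraic surgery obstruction in $L_4(\Z)\cong\Z$ is detected by the signature, which matches because the forms do; so no algebraic obstruction remains. Geometrically, what is left is to cancel excess intersections among immersed $2$-spheres representing a basis of $H_2$, and for this one needs to upgrade algebraic Whitney disks to embedded ones. The essential ingredient is Freedman's theorem that \emph{Casson handles} --- infinite towers of immersed $2$-handles with kinks --- are homeomorphic (as pairs) to standard open $2$-handles, proved via intricate Bing-style decomposition-space techniques. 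Granting this, the topological Whitney trick goes through in dimension four for simply connected targets, and $f$ becomes homotopic to a homeomorphism.

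The main obstacle is precisely the Casson-handle theorem invoked in stage two; this is the technical heart of Freedman's work --- involving reimbedding theorems for Casson towers, a delicate infinite construction, and decomposition theory on $\R^4$ --- and it is emphatically not a routine calculation. In a lecture-series setting one would black-box it and focus instead on the relatively elementary surrounding arguments: the homotopy-theoretic realization of an isometry of forms, and the formal bookkeeping in the surgery exact sequence that turns these ingredients into the classification statement.
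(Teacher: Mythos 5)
The paper does not prove this statement; it cites Freedman as a black box, which is of course the only sensible thing to do in a lecture series of this scope. So there is no paper proof to compare against, and what follows is an assessment of your sketch on its own terms.

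Your two-stage decomposition — homotopy classification first, then Freedman's upgrade to a homeomorphism — is the right high-level picture. Stage one is accurate and is due to J.H.C. Whitehead and Milnor, predating Freedman by decades: the Hilton splitting of $\pi_3(\vee^{b_2}S^2)$ into Hopf-invariant and Whitehead-product summands encodes exactly the symmetric form $Q_X$, and an isometry of forms produces a map of mapping cones that is a degree-one $H_*$-isomorphism, hence a homotopy equivalence. (One small omission: the statement implicitly needs an orientation convention, since an orientation-reversing homeomorphism carries $Q_X$ to $-Q_{X'}$.) Stage two correctly identifies the Casson-handle/disk-embedding theorem as the technical heart, and correctly notes that smoothness kills the Kirby--Siebenmann obstruction. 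However, your surgery-exact-sequence bookkeeping is a bit loose: equality of signatures does not by itself make $\mathcal{S}^{\mathrm{TOP}}(X')$ a singleton, since the normal invariants $[X',G/\mathrm{TOP}]$ still carry an $H^2(X';\Z/2)$ summand in the kernel of the surgery obstruction map (this is where the KS ambiguity lives for genuinely topological $4$-manifolds). The cleaner and more standard route is: (i) Wall's 1964 theorem that smooth, closed, simply connected $4$-manifolds with isomorphic intersection forms are smoothly $h$-cobordant, and (ii) Freedman's topological $h$-cobordism theorem (the actual output of the Casson-handle work in this setting), which says such an $h$-cobordism is a topological product. Either packaging relies on the same deep input, which you have correctly flagged as the thing to black-box.
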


	How do we calculate intersection forms? Note that, by Poincar\'e-Lefschetz duality, we have $H^2(X,\partial X) \cong H_2(X)$, so we can view this as a bilinear form on homology
	$$Q_X : H_2(X) \otimes H_2(X) \to \Z \qquad \quad (\alpha,\beta) \mapsto \langle PD(\alpha) \smile PD(\beta), [X]\rangle. $$
	When $X$ is a smooth manifold, we can represent classes in $H_2(X)$ by smoothly embedded, oriented surfaces (Exercise~\ref{ex:represent}) and recast $Q_X$ as counting signed intersections  between them (cf. Proposition~\ref{prop:linking}). We will implement this using a diagrammatic approach below.
	
	\subsubsection{Linking numbers}
	Let $K_1$ and $K_2$ be disjoint, oriented knots in $S^3$. A basic exercise shows that any knot complement in $S^3$ has $H_1 \cong \Z$ and is generated by the meridian of $K_1$, leading to the following definition. 
	
	\begin{definition}
		Given oriented knots $K_1, K_2\subset S^3$, their \textit{linking number} $\lk(K_1,K_2)$ is the integer corresponding to the class  $[K_2] \in H_1(S^3 \setminus K_1) \cong \Z$, where $1 \in \Z$ corresponds to a positively oriented meridian of $K_1$.
	\end{definition}

	\begin{remark} The linking number can be expressed in other ways:
		\begin{enumerate}
			\item[(a)]  It can be computed diagrammatically via the formula
			$$\lk(K_1,K_2)= \frac{1}{2} \sum \{\text{signed crossings between $K_1$ and $K_2$}\}$$
			where the signs are determined via the right-hand rule as below. This makes it clear that the linking number is symmetric: $\lk(K_1,K_2)=\lk(K_2,K_1)$. 
			\begin{figure}[!ht]
				\center
				\includegraphics[width=.3\linewidth]{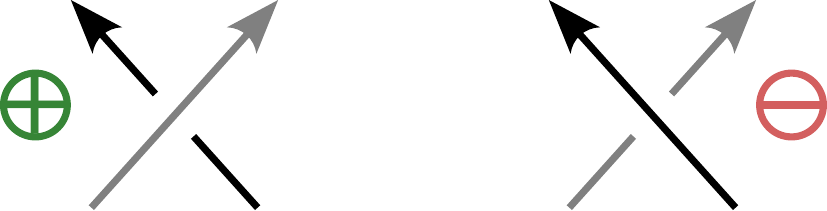}
			\end{figure}
			
			\item[(b)] The class $[K_2] \in H_1(S^3 \setminus K_1)$ can be calculated by choosing an oriented Seifert surface $\Sigma_1$ for the oriented knot $K_1$, perturbing $K_2$ to be transverse to $\Sigma_1$, then counting the signed intersection number $K_2 \cdot \Sigma_1$.
		\end{enumerate}
	\end{remark}
	
	\begin{example}\label{ex:linking}
		Consider the oriented link $L= K_1 \cup K_2 \cup K_3$ shown in Figure~\ref{fig:linking-ex}. A direct calculation using either of the above methods shows
		$$\lk(K_1,K_2)=2 \qquad \quad \lk(K_1,K_3) =0 \qquad \quad \lk(K_2,K_3)=0.$$
		In particular, note that  all four crossings between $K_1,\ K_2$ are positive, so $\lk(K_1,K_2)=2$. The component $K_3$ is unknotted and bounds an obvious disk in the page; it has two oppositely-signed intersection points with $K_1$, so $\lk(K_1,K_3)=0$. Finally, we also have $\lk(K_2,K_3)=0$ because $K_2$ and $K_3$ have no crossings.
	\end{example}
	
	\begin{figure}[h]
		\center
		\includegraphics[width=.85\linewidth]{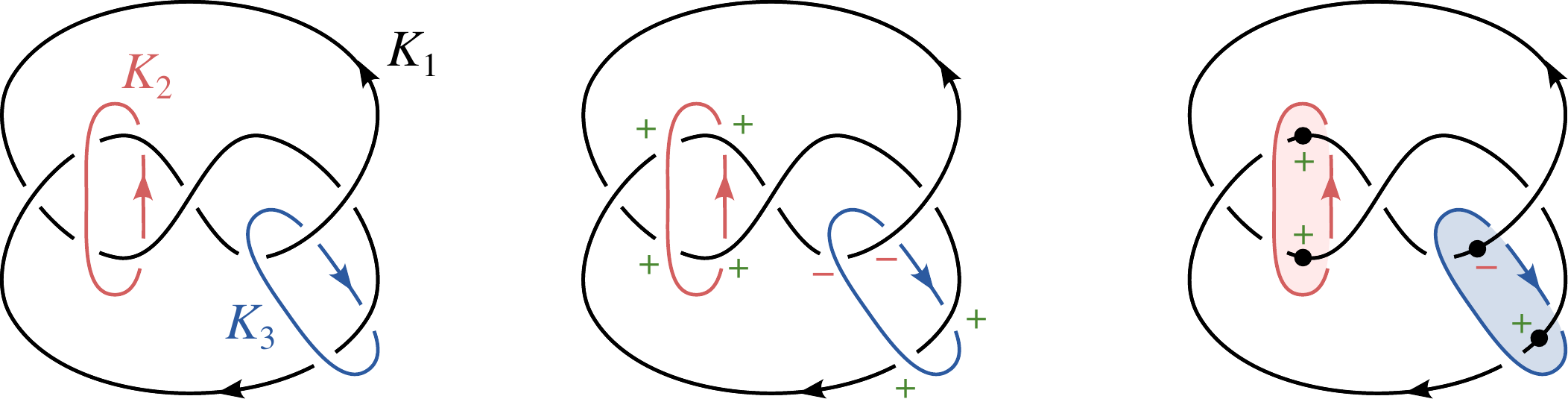}
		\caption{Calculating linking numbers for Example~\ref{ex:linking}.}\label{fig:linking-ex}
	\end{figure}

	Framings on link components can be interpreted as self-linking numbers, enabling us to organize the linking data of a framed, oriented link into a matrix as follows.

	\begin{definition}
		Given a framed, oriented $n$-component link $L \subset S^3$ (e.g., attaching data for 2-handles) with components $K_i$, its \emph{linking matrix} is the symmetric $n \times n$ matrix $A$ whose $ij^\text{th}$ entry is
		$$a_{ij} = \begin{cases} \lk(K_i,K_j) & i \neq j \\ \operatorname{framing}(K_i) & i = j. \end{cases}$$
	\end{definition}
	
	\begin{remark}
		A more uniform definition could be obtained by using the framings to define parallel pushoffs $K_i'$ for each $K_i$, then  setting $a_{ij}=\lk(K_i,K_j')$ for all $i$ and $j$.
	\end{remark}

	\begin{figure}[b]
	\center
		\includegraphics[width=.21\linewidth]{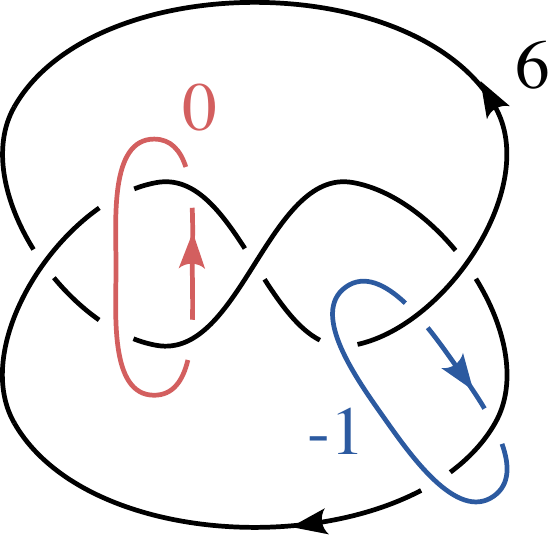}
		\caption{A framed, oriented link.} \label{fig:framingex}
	\end{figure}

	\begin{example}\label{ex:framed}
		The diagram in  Figure~\ref{fig:framingex} shows a copy of the link from Example~\ref{ex:linking} equipped with framings on the link components. With the orientations and notation from Figure~\ref{fig:linking-ex}, the linking matrix is
		$$\left[\ \begin{matrix}  6 & 2 & 0 \\ 2 & 0 & 0 \\ 0 & 0 & -1  \end{matrix}\ \right].$$

	\end{example}

	\subsubsection{Calculating intersection forms from handle diagrams} 
	
	The next proposition explains our interest in the linking matrix:

	\begin{proposition}[Linking matrices and intersection forms]\label{prop:linking}
		If $X$ is obtained from $B^4$ by attaching 2-handles along a framed, oriented $n$-component link $L$, then $H_2(X)\cong \Z^n$ and $Q_X$ is presented by the linking matrix of $L$.
	\end{proposition}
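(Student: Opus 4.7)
The plan is to compute $H_2(X)$ directly from the cellular chain complex of the handle decomposition, then to realize an explicit basis by smoothly embedded closed surfaces and evaluate their pairwise intersections using a collar model of $\partial B^4$ together with the Seifert-surface formula for the linking number.

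Since $X$ has one $0$-handle, $n$ 2-handles, and no handles of any other index, the cellular chain complex satisfies $C_0(X)\cong\Z$, $C_1(X)=0$, $C_2(X)\cong\Z^n$, and $C_k(X)=0$ for $k\geq 3$. The boundary $\partial_2\colon C_2(X)\to C_1(X)=0$ is zero, so $H_2(X)\cong\Z^n$ with generators $e_1,\ldots,e_n$ indexed by the 2-handles. Following Example~\ref{ex:wedge}, I would represent each $e_i$ by the closed oriented surface $S_i = c_i\cup \Sigma_i$, where $c_i\subset h_i$ is the core disk and $\Sigma_i$ is a Seifert surface for the attaching circle $K_i$ pushed slightly into the interior of $B^4$, with orientations matching along $K_i$.

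For the off-diagonal entries, I would compute $Q_X(e_i,e_j)=S_i\cdot S_j$ for $i\neq j$ by transverse perturbation: the cores $c_i,c_j$ lie in disjoint handles and each $\Sigma_k$ can be arranged in the interior of $B^4$ away from all attaching regions, so the only possible intersections sit in $\Sigma_i\cap \Sigma_j$. To evaluate this, place both surfaces inside a collar $S^3\times[0,1]\subset B^4$ of $\partial B^4=S^3\times\{1\}$ by writing $\Sigma_k=(K_k\times[t_k,1])\cup(\Sigma_k^{S^3}\times\{t_k\})$, where $\Sigma_k^{S^3}\subset S^3$ is a Seifert surface for $K_k$ and $t_i<t_j$ are distinct heights. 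A direct check shows that all potential intersections vanish except for $(K_i\times[t_i,1])\cap(\Sigma_j^{S^3}\times\{t_j\})=(K_i\cap\Sigma_j^{S^3})\times\{t_j\}$, whose signed count equals $\lk(K_i,K_j)$ by Remark~(b).

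For the diagonal entries I would push $S_i$ off itself using the 2-handle's framing: the framing prescribes a parallel copy $K_i'\subset S^3$, a disjoint parallel core $c_i'\subset h_i$, and a parallel Seifert surface $\Sigma_i'$. The cores $c_i$ and $c_i'$ are disjoint horizontal slices of $D^2\times D^2$ and contribute nothing, so the collar calculation gives $S_i\cdot S_i'=\lk(K_i,K_i')$, which is exactly the framing of $K_i$. The main obstacle is verifying the geometric identity $\Sigma_i\cdot\Sigma_j=\lk(K_i,K_j)$ inside $B^4$; once the collar model is set up and the Seifert-surface interpretation of linking number is invoked, linearity of the intersection pairing together with the cellular computation of $H_2(X)$ finishes the proof mechanically.
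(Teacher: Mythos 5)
Your proposal is correct and follows essentially the same route as the paper: represent the generators of $H_2(X)\cong\Z^n$ by Seifert surfaces (pushed into the interior) capped off with the 2-handles' core disks, then compute off-diagonal entries by perturbing one surface and identifying the remaining intersections with $K_i\cap\Sigma_j$ (equivalently $\lk(K_i,K_j)$), and diagonal entries by a framed parallel pushoff. Your collar-at-heights model is a more explicit version of the paper's ``push the interior slightly into $B^4$'' step, and your cellular-chain-complex computation of $H_2$ is equivalent to the paper's appeal to $X\simeq\vee^n S^2$; the only loose ends are the usual corner-rounding and a sign/orientation check, which the paper also elides.
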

	
	\begin{proof}[Sketch] As mentioned in Example~\ref{ex:wedge}, $X$ is homotopy equivalent to $\vee^n S^2$, implying  $H_2(X) \cong \Z^n$. The intersection form $Q_X$ is determined by its behavior on any chosen basis for $H_2(X)$, and we can choose a natural one as follows.  For each component $K_i$ in $L$, fix an oriented Seifert surface $\Sigma_i \subset S^3$ for $K_i$. (For later convenience, we will assume $\Sigma_i$ is in general position with respect to $L \setminus K_i$ so that it meets each other link component transversely.) Let $\hat \Sigma_i$ be the union of $\Sigma_i$ with the core disk $D_i = D^2\times 0$ inside the 2-handle attached along $K_i$. The homology classes $[\hat \Sigma_i]$ of these closed surfaces $\hat \Sigma_i$ form a basis for $H_2(X)$. (Indeed,  the homotopy equivalence collapsing  $X$ to $\vee^n S^2$ carries $\hat \Sigma_i$ to the $i^\text{th}$ wedge summand $S^2$ via a degree-one map.)

		For $i \neq j$, any intersection between $\hat \Sigma_i$ and $\hat \Sigma_j$ must occur in $S^3$. Let $\Sigma_j'$ be a surface obtained from $\Sigma_j$ by pushing the interior $\mathring{\Sigma}_j$ slightly into $B^4$ so that $\Sigma_j' \cap \partial B^4 = K_j$. This modified closed surface  $\hat \Sigma_j'= \Sigma_j' \cup_{K_j} D_j$  satisfies $[\hat \Sigma_j']=[\hat \Sigma_j]$ and is transverse to $\hat \Sigma_i$, meeting it only where $K_j$ intersects $\Sigma_i$. Thus we have
		\begin{align*}
			[\hat \Sigma_i]\cdot[\hat \Sigma_j] = [\hat \Sigma_i]\cdot[\hat \Sigma_j'] =\hat \Sigma_i \cdot \hat \Sigma_j' = \Sigma_i \cdot K_j = \lk(K_i,K_j).
		\end{align*}
		
		To find $[\hat \Sigma_i]\cdot [\hat \Sigma_i]$, we  need a perturbed copy of $\hat \Sigma_i$ that is transverse to $\hat \Sigma_i$. In the 2-handle $D^2 \times D^2$ attached along $K_i$, choose a perturbed copy of the core disk, expressed as  $D_i' = D^2 \times \{\epsilon\}$ for a point $\epsilon \in D^2 \setminus \{0\}$. The intersection $D_i' \cap S^3$ is an $n$-framed pushoff of $K_i$, which we will denote by $K_i'$. Let $\Sigma_i'$ denote a copy of $\Sigma_i$ dragged along during the isotopy from $K_i$ to $K_i'$, with its interior pushed into $B^4$ as above. Then $\hat \Sigma_i'=\Sigma_i' \cup D_i'$ satisfies $[\hat \Sigma_i']=[\hat \Sigma_i]$ and is transverse to $\hat \Sigma_i$, allowing us to compute
		$$\qquad \qquad
			[\hat \Sigma_i]\cdot[\hat \Sigma_i] = [\hat \Sigma_i]\cdot[\hat \Sigma_i'] =\hat \Sigma_i \cdot \hat \Sigma_j' = \Sigma_i \cdot K_i' = \lk(K_i,K_i')=\operatorname{framing}(K_i)\in \Z. \  \qed $$
\renewcommand{\qedsymbol}{}		
	\end{proof}
	\renewcommand{\qedsymbol}{$\square$}
	
	\vspace{-3.125ex}
	
	While it may seem restrictive, many interesting examples of 4-manifolds can be built by attaching 2-handles to $B^4$. If a 4-manifold $X$ has a unique 0-handle and some number of 4-handles, we may still apply Proposition~\ref{prop:linking}  because removing (the interiors of) the 4-handles does not affect $H^2(X)$; see Exercise~\ref{exer:glue} below. Similar techniques apply   in the presence of 1- and 3-handles. In this case, the 2-handles' attaching curves still provide a basis for the chain group $C_2(X)$; we must extract a basis for $H_2(X)$ as in \S\ref{subsec:homology}, with each basis element expressed as a linear combination of the 2-handles' attaching curves. Restricting the linking form to the subspace spanned by these elements yields a presentation of the intersection form on $H_2(X)$.

	\begin{exercise} \label{ex:represent} Let $X$ be any smooth, compact 4-manifold (possibly with boundary). Prove that each class in $H_2(X)$ is represented by a smoothly embedded surface in $X$.
		
		\emph{Hint: After choosing a handle structure on $X$, each class in $H_2(X)$ corresponds to some linear combination of 2-handles.   To build a representative surface, begin with an appropriate collection of disjoint copies of the cores of these 2-handles, then build the rest of your surface (while ensuring it is embedded).}
	\end{exercise}

	\begin{remark}\label{rem:transpose}
		We consider intersection forms up to congruence: If we change our basis for $\Z^n$ by a matrix $A$, then any bilinear form $Q$ on $\Z^n$ transforms as $Q \rightsquigarrow A^T Q A$. 
	\end{remark}

	\begin{exercise}\label{ex:sphere-bundle} Let $M_n$ denote the 4-manifold built from one 0-handle and two 2-handles as given in Figure~\ref{fig:hopf}. 
	
	\vspace{-0.5ex}
	
		\begin{enumerate}[itemsep=-0.5ex]
			\item[\textbf{(a)}] Show that $Q_{M_n} \cong \displaystyle \begin{bmatrix} 0 & 1 \\ 1 & n \end{bmatrix}$ (for a natural  choice of basis).

			\item[\textbf{(b)}] Show that $\partial M_n \cong S^3$.
			\item[\textbf{(c)}] In either order, do the following:
			\begin{enumerate}
				\item [(\emph{i})] \emph{Linear algebra:} Find a basis for $H_2(M_n;\Z)$ in which $Q_{M_n}$ is presented by 
				$$\begin{bmatrix} { } 0 & 1 { } \\  { } 1 & 0 { }\end{bmatrix} \quad \text{if }  n \text{ is even}  \qquad \text{ or } \qquad \begin{bmatrix} { } 0 & 1 { } \\ { } 1 & 1 { } \end{bmatrix}  \quad \text{if }  n \text{ is odd}. $$ 
				
				\vspace{-0.5ex}
				
				\item[(\emph{ii})] \emph{Kirby calculus:} Use handle slides to show $M_n \cong M_{m}$  $\Leftrightarrow$ $m \equiv n \pmod 2$.
			\end{enumerate}
		\end{enumerate}
	\end{exercise}

	\begin{figure}[h!]\center
		\includegraphics[width=.2\linewidth]{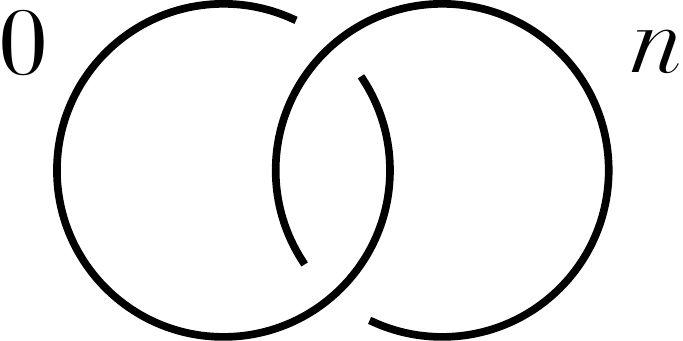}%
		\caption{A framed Hopf link.}\label{fig:hopf}
	\end{figure}

	The first part of the next exercise implies that we can also use Proposition~\ref{prop:linking} for 4-manifolds built from one 0-handle, some number of 2-handles, and a single 4-handle.
	
	\begin{exercise} \label{exer:glue}
		\begin{enumerate}
			\item[\textbf{(a)}] Given a closed 4-manifold $X$, show that $H_2(X \setminus \mathring{B}^4) \cong H_2(X)$ and $Q_{X \setminus B^4} \cong Q_X$. 
			\item[\textbf{(b)}] Given 4-manifolds $X$ and $X'$, show that $H_2(X \# X') \cong H_2(X) \oplus H_2(X')$ and $Q_{X \# X'} \cong Q_X \oplus Q_{X'}$. 
			\item[\textbf{(c)}] Generalize this to the case where $\partial X= - \partial X'$ is a 3-manifold $Y$ with $H_1(Y)=0$  and we glue $X \cup_Y -X'$.
		\end{enumerate}
	\end{exercise}

	\begin{example} \textbf{(a)} $\pm \CP^2$ (where $-\CP^2=\overline{\mathbb{C}P}^2$) is obtained from $B^4$ by attaching a $\pm1$-framed 2-handle along an unknot, then attaching a 4-handle, so $Q_{\CP^2} = \langle \pm 1\rangle$. 
		
		\textbf{(b)} $S^2 \times S^2$ corresponds to a 0-framed Hopf link (i.e., $n=0$ in Exercise~\ref{ex:sphere-bundle}) plus a 4-handle, so $Q_{S^2 \times S^2}$ is given by the \emph{standard hyperbolic form}
		$$H =\begin{bmatrix} 0 & 1 \\ 1 & 0 \end{bmatrix}.$$
	\end{example}

	\begin{exercise}
		Prove (or at least convince yourself) that every symmetric matrix $Q$ over $\Z$ arises as the linking matrix of a framed link in $S^3$, and hence that $Q$ arises as the intersection form of a compact 4-manifold with boundary.
	\end{exercise}

	\bigskip
	\medskip
	
	\addtocounter{section}{1}
	\section*{\textbf{Lecture 2: Intersection forms and smooth 4-manifolds}}
	\setcounter{subsection}{0}
	\setcounter{theorem}{0}

	\medskip
		
 I once had the pleasure of taking a topology class taught by Paul Melvin. I was an undergrad, and I confessed to him that I couldn't really picture 3-manifolds other than $\R^3$ --- and maybe $S^3$, too. He nodded consolingly and paused a while, then said, ``Well, I like to picture them as the boundaries of 4-manifolds.''
	
This lecture will take a closer look at the intersection forms of 4-manifolds, including 4-manifolds with boundary. And Paul was right: we will encounter an important interplay between 3- and 4-manifold topology, beginning with Rohlin's theorem \cite{rohlin} and the Poincar\'e homology sphere, and connecting all the way through to Manolescu's relatively recent disproof of the Triangulation Conjecture \cite{manolescu}.

\medskip
\smallskip
	
	{2.0. \textbf{The goal.}} 	
	Our goal is to outline the existence of closed, simply connected topological 4-manifolds that do not admit smooth structures. We will be able to sketch this after laying out some (elementary) properties of symmetric bilinear forms and citing some (non-elementary!) results about the ones that arise as intersection forms of smooth, closed, simply-connected 4-manifolds.

	\subsection{Basic properties  for intersection forms} We begin with  observations and definitions that apply to any symmetric bilinear form $Q$ on $\Z^n$.
	
	\begin{itemize}
		\item The \emph{rank} of $Q$ is simply $n$, the dimension of the underlying lattice. 
		\item Fix a basis for $\Z^n$ to view $Q$ as a symmetric matrix. The eigenvalues of a symmetric matrix are real, so we can let $b^+(Q)$ and $b^-(Q)$ denote the numbers of (strictly) positive and (strictly) negative eigenvalues, respectively, counted with multiplicity. Note that $b^\pm$ are independent of the basis for $\Z^n$, hence are invariants of $Q$ itself.
		
		\item  The \emph{signature} of $Q$ is $\sigma(Q)=b^+(Q)-b^-(Q)$.
		
		\item  $Q$ is $(\pm)$ \emph{definite} if $\sigma(Q) = \pm \rank(Q)$ and is \emph{indefinite} otherwise.
		
		\item  $Q$ is \emph{unimodular} if $\det Q = \pm 1$, which ensures $Q$ is invertible over $\Z$. More generally, we say $Q$ is \emph{nondegenerate} if $\det Q \neq 0$, which only ensures $Q$ is invertible over $\Q$.
		
		\item The \emph{type} of $Q$ is \emph{even} if $Q(x,x)\equiv 0 \! \mod 2$ for all $x \in \Z^n$ and is \emph{odd} otherwise.
	\end{itemize}
	
	\begin{remark}\label{rem:basis}
		The determinant of a bilinear form over $\Z$ is well-defined. This is because any change-of-basis matrix $A$ over $\Z$ is  unimodular, hence $\det(Q)=\det(A^T Q A)$.  When working with bilinear forms over $\Q$ or $\R$, this fails, but  the \textit{sign} of the determinant is still well-defined.  
	\end{remark}
	
	\begin{example}
		For the 4-manifold $X$ from Example~\ref{ex:linking}, we had
		$$Q_X =\left[\ \begin{matrix}  6 & 2 & 0 \\ 2 & 0 & 0 \\ 0 & 0 & -1  \end{matrix}\ \right].$$
		Observe that $\det(Q_X)=4$, so $Q_X$ is nondegenerate but not unimodular. We can easily see that $-1$ is an eigenvalue, so the positive determinant implies that we must have $b_-=2$ and $b_+=1$. Therefore $\sigma(Q_X)=-1$, and $Q_X$ is indefinite. Since $e_3 \cdot e_3=-1 \not \equiv 0$ mod 2, $Q_X$ is odd.
	\end{example}
	
	\begin{exercise}
		{\normalfont \textbf{(a)}} Repeat the above for the intersection forms on $\pm \mathbb{C}P^2$ and $S^2 \times S^2$. 
		
		\qquad {\normalfont \textbf{(b)}} How do these invariants behave under connected sums of 4-manifolds?
	\end{exercise}
	
	\begin{exercise}
		A form $Q$ on $\Z^n$ is even $\iff$ every matrix for $Q$ has even diagonal $\iff$ at least one matrix for $Q$ has even diagonal.
	\end{exercise}

	\begin{figure}[h]
	\center
		\includegraphics[width=.225\linewidth]{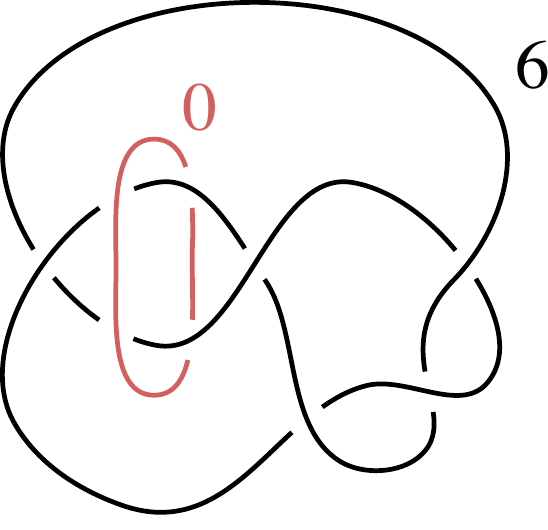}
		\caption{A 4-manifold with even intersection form.}\label{fig:blown}
	\end{figure}

	\begin{example}\label{ex:even}
		Take the 4-manifold $X$ in Example~\ref{ex:framed}, whose intersection form was not even, and blow down the $(-1)$-framed unknotted component. We obtain the handle diagram shown in Figure~\ref{fig:blown}, representing a new 4-manifold $X'$ with $\partial X'=\partial X$. Using Remark~\ref{rem:basis} and the fact that all framing coefficients in the diagram are even, we conclude that $X'$ is even.
	\end{example}

	We will not prove the following result, but we will briefly make use of it later. 
	
	\begin{theorem}\label{thm:bound}
		Every closed, connected, orientable 3-manifold bounds a smooth, simply connected 4-manifold  $X$ built from one 0-handle and a number of 2-handles. Moreover, we may choose $X$ so that $Q_X$ is even. 
	\end{theorem}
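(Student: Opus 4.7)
The plan is to combine the Lickorish--Wallace surgery theorem with Kaplan's result on even surgery presentations. By Lickorish--Wallace, every closed, connected, orientable 3-manifold $Y$ arises as integer Dehn surgery on some framed link $L\subset S^3$. Equivalently, $Y=\partial X_L$ where $X_L$ is obtained from $B^4$ by attaching 2-handles along the components of $L$ with the prescribed framings. Because $X_L$ has only a 0-handle and 2-handles, the collapsing argument of Example~\ref{ex:wedge} shows that $X_L$ is homotopy equivalent to $\vee^n S^2$, so in particular $X_L$ is simply connected. By Proposition~\ref{prop:linking}, $Q_{X_L}$ is presented by the linking matrix of $L$, whose diagonal entries are the framings $n_i$.

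It remains to arrange for $Q_{X_L}$ to be even, that is, for every $n_i$ to be even. Two Kirby moves preserve the boundary $\partial X_L=Y$: handle slides, which change $X_L$ only by a diffeomorphism and act on the linking matrix by a $\Z$-congruence $A\mapsto E^T A E$ for an elementary matrix $E$; and blow-ups, which connect-sum $X_L$ with $\pm\CP^2$, appending $\pm 1$ on the diagonal of the linking matrix without changing $\partial X_L$. Under a slide of $K_i$ over $K_j$, the new framing is $n_i+n_j+2\lk(K_i,K_j)$, whose parity is $n_i+n_j\pmod{2}$, so these moves allow us to shuffle the parities of the framings while fixing the boundary 3-manifold.

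The main obstacle is combinatorial: the naive attempt of introducing one $\pm 1$-framed unknot per odd-framed component, then sliding, simply trades one odd framing for another, since the auxiliary unknot itself carries an odd framing. The content of Kaplan's lemma is that one can nevertheless always achieve all even framings, by pairing up the odd-framed components and, for each such pair, introducing a small cluster of auxiliary $\pm 1$-framed unknots with a prescribed linking pattern; a carefully chosen sequence of handleslides then makes the framings of both the original pair and the auxiliary components even simultaneously. I would sketch this pairing procedure on a small example (two odd components) to make the parity bookkeeping transparent, and cite the original source for the full combinatorial verification. Applying the procedure to every pair of odd components yields a surgery presentation of $Y$ with all even framings, and the resulting $X$ is the desired simply connected 4-manifold with even intersection form bounding $Y$.
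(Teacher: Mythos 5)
Your proposal takes the same two-step route the paper's sketch describes: Lickorish--Wallace realizes $Y$ as integral surgery on a framed link $L\subset S^3$, giving a simply connected $X$ built from one $0$-handle and $2$-handles via Example~\ref{ex:wedge} and Proposition~\ref{prop:linking}, and then Kirby moves fixing $\partial X$ are used to make the linking matrix even, with the combinatorics deferred to Kaplan. That said, your heuristic description of the mechanism is not what actually drives Kaplan's argument or what the paper's sketch points to. The obstruction to evenness is not carried by the set of odd-framed components, and those need not come in pairs. The relevant object is the \emph{characteristic sublink} $L'\subset L$, the unique (mod $2$) sublink satisfying $\lk(K_i,L')\equiv \lk(K_i,K_i)\pmod 2$ for every component $K_i$; this depends on the full linking pattern, not just the diagonal. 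It can contain even-framed components and exclude odd-framed ones --- for instance, the Hopf link with framings $(1,0)$ has characteristic sublink equal to the $0$-framed component alone, even though the other component is the odd-framed one. Kaplan's procedure uses blow-ups, handle slides, and blow-downs to shrink and eventually eliminate the characteristic sublink, rather than a pairing of odd-framed components with auxiliary clusters. Since you explicitly hand off the detailed bookkeeping to the source, your argument is not wrong, but the ``pairing'' picture would break down if you tried to make it rigorous, and the characteristic-sublink framing is the one to carry forward.
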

	
	The key idea behind realizing 3-manifolds as boundaries of 4-manifolds is to show that any closed, orientable 3-manifold $Y$ can be realized as Dehn surgery on an integrally-framed link $L$ in $S^3$. Then view $L$ as instructions for attaching 2-handles to $B^4$, yielding a 4-manifold $X$ with $\partial X=Y$. If $X$ has $Q_X$ odd, it turns out that there must be a particular type of non-empty sublink of $L$ (called a ``characteristic sublink'') that is responsible for $Q_X$ being odd, like the $(-1)$-framed unknot in Example~\ref{ex:even}. As in that example, there is always a sequence of blow-ups and blow-downs that eventually eliminate all nonempty characteristic sublinks, resulting in a new 4-manifold with even intersection form.

	The simplest symmetric bilinear form that is unimodular, positive definite, and even  turns out to be an 8-dimensional form known as $E_8$, presented by the matrix
	
	$$
	E_8 \  = \  \left[\ \begin{matrix}
		2 & 1 & 0 & 0 & 0 & 0 & 0 & 0
		\\
		1 & 2 & 1 & 0 & 0 & 0 & 0 & 0
		\\
		0 & 1 & 2 & 1 & 0 & 0 & 0 & 0 
		\\
		0 & 0 & 1 & 2 & 1 & 0 & 0 & 0 
		\\
		0 & 0 & 0  & 1 & 2 & 1 & 0 & 1
		\\
		0 & 0 & 0 & 0 & 1 & 2 & 1 & 0 
		\\
		0 & 0 & 0 & 0 & 0 & 1 & 2 & 0
		\\
		0 & 0 & 0 & 0 & 1 & 0 & 0 & 2
	\end{matrix}\ \right].
	$$
	
	\bigskip
	
	\begin{example}\label{ex:E8}
		We can realize $E_8$ as the intersection form of the compact 4-manifold with boundary shown in the top left corner of Figure~\ref{fig:E8}, known as the $E_8$-plumbing manifold. The boundary of the $E_8$-plumbing manifold is known as the \emph{Poincar\'e homology sphere}, denoted $P$. In Figure~\ref{fig:E8}, we see that $P$ can also be obtained from $(+1)$-framed Dehn surgery on the right-handed trefoil.
	\end{example}
	
	\begin{figure}\center
		\def\svgwidth{\linewidth} 
		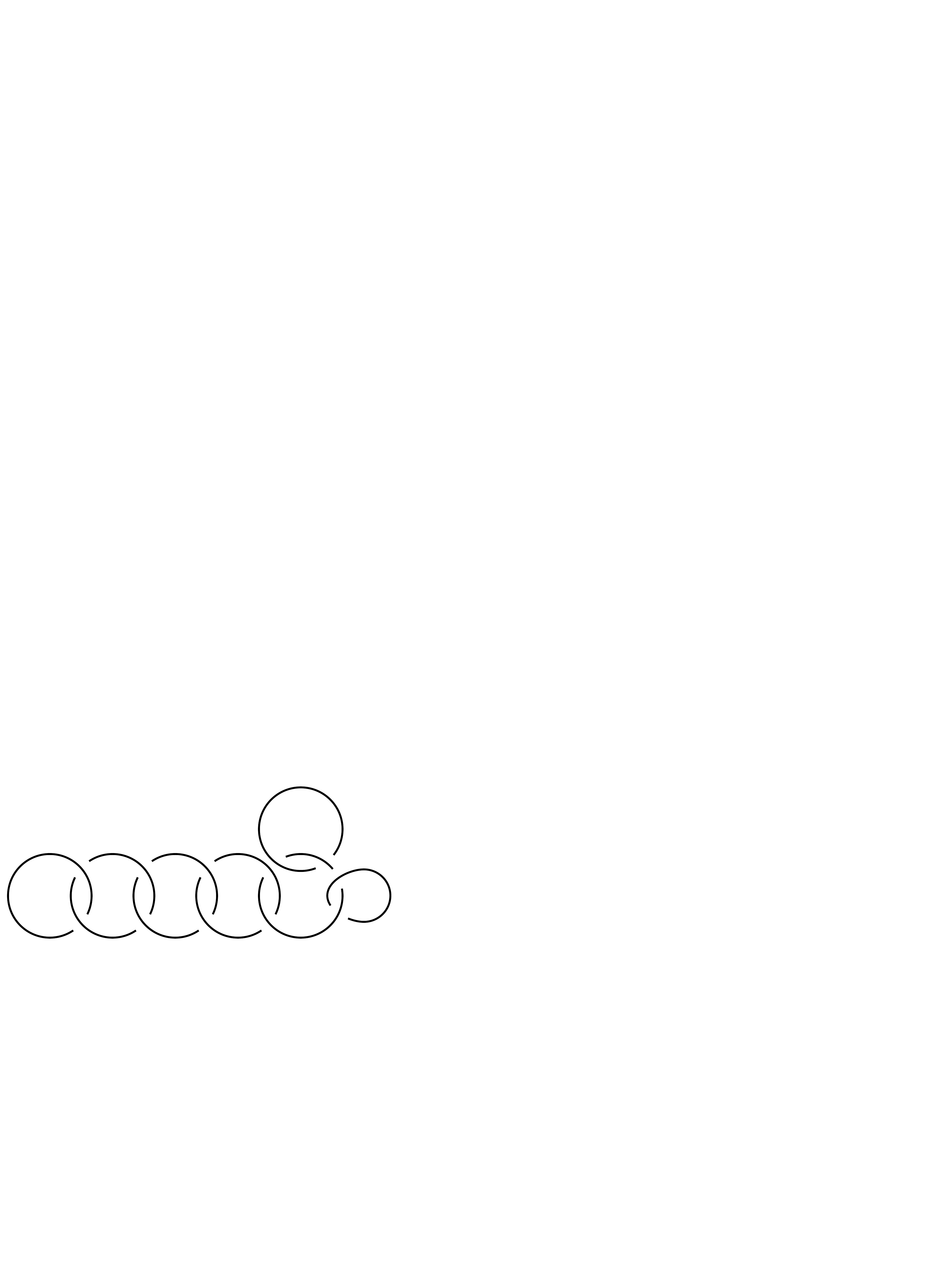 
		\medskip
		\caption{Relating the boundaries of the $E_8$-plumbing manifold and surgery on the trefoil knot.}\label{fig:E8}
	\end{figure}
	
	\begin{exercise}
		Carefully check the handle calculus in Figure~\ref{fig:E8}, paying  attention to the framings (especially near the end).
	\end{exercise}

	\begin{definition}
		Let $Y$ be a closed 3-manifold. We say that $Y$ is an \emph{integer homology 3-sphere} (abbreviated as $\Z HS^3$) if $H_*(Y;\Z) \cong H_*(S^3;\Z)$. More generally, given a coefficient ring $R$, we say $Y$ is an \emph{$R$-homology 3-sphere} if $H_*(Y;R) \cong H_*(S^3;R)$.
	\end{definition}

	\begin{exercise} {\normalfont\textbf{(a)}}
		$Q_X$ is unimodular $(\det(Q_X) = \pm 1)$ if and only if $\partial X$ is a $\Z HS^3$.
		
		\quad {\normalfont\textbf{(b)}}
		$Q_X$ is nondegenerate $(\det(Q_X)  \neq 0)$ if and only if  $\partial X$ is a $\Q HS^3$.
	\end{exercise}

	\subsection{Constraints on intersection forms} In what follows, $Q$ denotes a unimodular, symmetric bilinear form over $\Z$. First, a constraint that's purely algebraic:
	
	\begin{theorem}
		If $Q$ is even, then $\sigma(Q)$ is divisible by 8.
	\end{theorem}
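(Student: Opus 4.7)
The plan is to deduce this from a classical identity of van der Blij relating signature to characteristic vectors. Recall that a vector $w \in \Z^n$ is \emph{characteristic} for a symmetric bilinear form $Q$ on $\Z^n$ if $Q(x,x) \equiv Q(x,w) \pmod{2}$ for every $x$. Since the map $x \mapsto Q(x,x) \bmod 2$ is $\Z/2$-linear (the cross terms $2Q(x,y)$ being even), and since $Q$ is unimodular, this linear functional is represented by a unique class $[w] \in \Z^n / 2\Z^n$. Crucially, if $Q$ is even then $Q(x,x) \equiv 0 \pmod 2$ for every $x$, so $w = 0$ is characteristic.

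The heart of the argument is van der Blij's lemma: for every characteristic vector $w$ of a unimodular symmetric bilinear form $Q$ on $\Z^n$, one has $Q(w,w) \equiv \sigma(Q) \pmod{8}$. Granting this, the theorem is immediate --- taking $w = 0$ yields $0 = Q(0,0) \equiv \sigma(Q) \pmod{8}$. So the real task is to establish van der Blij's lemma.

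To prove the lemma, I would evaluate the Gauss sum $G(Q) = \sum_{x \in \Z^n/2\Z^n} e^{\pi i Q(x,x)/2}$ in two complementary ways. First, the substitution $x \mapsto x + w$, combined with the characteristic condition, rewrites $G(Q)$ as $e^{\pi i Q(w,w)/4}$ times a factor that depends on $Q$ only through data controlling $Q(w,w) \bmod 8$. Second, one reduces $Q$ to a canonical form (diagonal over the odd primes, and a direct sum of standard pieces over $\Z_2$) and computes each factor directly to obtain $G(Q) = 2^{n/2} e^{\pi i \sigma(Q)/4}$, where the signature enters because an orthogonal diagonalization over $\R$ contributes a sign $(-i)^{b^-}$ versus $i^{b^+}$ at each one-dimensional piece. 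Equating the two expressions for $G(Q)$ yields $Q(w,w) \equiv \sigma(Q) \pmod{8}$.

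The main obstacle is the $2$-adic local analysis needed for the second calculation: unlike at odd primes, unimodular forms over $\Z_2$ admit exceptional even pieces (the $2$-adic avatars of the hyperbolic form $H$ and of $E_8$ itself), and carefully accounting for their contribution to $G(Q)$ requires the full local classification of symmetric bilinear forms over $\Z_2$. Once this local input is in hand, the Gauss sum evaluation and the resulting congruence --- and hence the theorem --- fall out cleanly.
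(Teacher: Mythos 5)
The paper states this theorem without proof, so there is no ``paper's own argument'' to compare against; I'll assess your proposal on its own terms.

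Your overall strategy is correct. Reducing the statement to van der Blij's lemma --- $Q(w,w) \equiv \sigma(Q) \pmod{8}$ for any characteristic vector $w$ of a unimodular form --- and then observing that evenness makes $w = 0$ characteristic is exactly the standard route, and it cleanly yields the conclusion. You were also right to invoke unimodularity: the paper's surrounding text (``In what follows, $Q$ denotes a unimodular, symmetric bilinear form over $\Z$'') makes this hypothesis implicit, and without it the statement is false (e.g., $Q = \langle 2 \rangle$ is even with $\sigma = 1$).

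That said, the Gauss sum route you sketch for van der Blij's lemma is the harder of the two standard proofs, and your sketch of it is loose in a way worth flagging. With $G(Q) = \sum_{x \in \Z^n/2\Z^n} i^{Q(x,x)}$, the substitution $x \mapsto x + w$ and the characteristic property give $G(Q) = i^{Q(w,w)}\,\overline{G(Q)}$, which combined with $G(Q) = 2^{n/2} e^{\pi i \sigma(Q)/4}$ only yields $\sigma(Q) \equiv Q(w,w) \pmod 4$ directly; upgrading to mod $8$ requires pinning down a sign, and that is precisely where the $2$-adic local analysis you mention becomes essential (and nontrivial). A cleaner and more self-contained argument avoids Gauss sums entirely: (i) check that $Q(w,w) \bmod 8$ is independent of the choice of characteristic $w$ --- if $w' = w + 2v$, then $Q(w',w') - Q(w,w) = 4Q(w,v) + 4Q(v,v) \equiv 8Q(v,v) \equiv 0 \pmod 8$ using the characteristic property; (ii) note that both $\sigma$ and ``$Q(w,w) \bmod 8$'' are additive under orthogonal sum; (iii) use the classification of indefinite odd unimodular forms to conclude that $Q \oplus \langle 1 \rangle \oplus \langle -1 \rangle$ is diagonalizable, and verify the congruence (in fact equality) for diagonal forms, where the characteristic vector can be taken to have all entries $\pm 1$. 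This gives van der Blij, and hence the theorem, with far less machinery than the full $2$-adic local classification your sketch leans on.
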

	
	Note that the $E_8$ form satisfies this, of course, because $\sigma(E_8)=8$. Rohlin proved a stronger constraint for intersection forms of smooth 4-manifolds.
	
	\begin{theorem}[Rohlin \cite{rohlin}]\label{thm:rohlin} If $X$ is a smooth, closed, simply connected 4-manifold with even intersection form $Q_X$, then $\sigma(Q_X)$ is divisible by 16.
	\end{theorem}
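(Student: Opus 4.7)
The plan is to derive the divisibility from the Atiyah-Singer index theorem for the Dirac operator on a spin 4-manifold. The first step is to promote the even-form hypothesis into a spin structure on $X$. For simply connected $X$, the Stiefel-Whitney class $w_2(X) \in H^2(X;\Z/2)$ is characterized via Wu's formula by the identity $\langle w_2(X), \alpha \rangle \equiv Q_X(\alpha,\alpha) \pmod 2$ for every $\alpha \in H_2(X;\Z/2)$; evenness of $Q_X$ forces the right-hand side, and hence $w_2(X)$, to vanish. Combined with $w_1(X)=0$ from orientability, this yields a spin structure on $X$.

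With a spin structure fixed, one obtains complex half-spinor bundles $S^{\pm}$ and an elliptic Dirac operator $D : \Gamma(S^+) \to \Gamma(S^-)$. The next step is to invoke Atiyah-Singer; in dimension four, Hirzebruch's signature formula and the $\hat{A}$-genus combine to give
\[
\operatorname{ind}(D) \; = \; \hat{A}(X)[X] \; = \; -\,\sigma(X)/8.
\]
On its own this already recovers the divisibility of $\sigma(X)$ by $8$ asserted in the preceding purely algebraic theorem (since $Q_X$ is even unimodular).

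The decisive step is quaternionic: in dimension four, $S^{\pm}$ carry parallel $\C$-antilinear bundle endomorphisms squaring to $-1$, with which $D$ commutes. Consequently $\ker D$ and $\operatorname{coker} D$ are quaternionic vector spaces, so their complex dimensions are even and $\operatorname{ind}(D) \in 2\Z$. Comparing this with the index formula yields $\sigma(X)/8 \in 2\Z$, which is the claim.

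The main obstacle is expository: none of the inputs --- spin structures, the Dirac operator, Atiyah-Singer, or the quaternionic structure on spinors --- is within the scope of these lectures, so a genuinely self-contained proof along these lines is not feasible at this level and the result must be cited as a black box. An alternative route, closer to Rohlin's original argument and more in the spirit of these notes, would instead use a Kervaire-Milnor-type congruence expressing $\sigma(X) \pmod{16}$ in terms of the Arf invariant of a smoothly embedded characteristic surface representing a lift of $w_2(X)$; when $X$ is spin, the zero class is characteristic, so the empty surface suffices and its Arf invariant vanishes trivially.
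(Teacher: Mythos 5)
The paper does not actually prove Rohlin's theorem; Theorem~\ref{thm:rohlin} is stated with only a citation to \cite{rohlin}, accompanied by the remark that for simply connected $X$ the spin condition coincides with $Q_X$ being even. So there is no in-paper proof to compare against, and your proposal supplies an argument where the source offers none.

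Your argument itself is the standard modern proof and is correct. The passage from ``$Q_X$ even'' to ``$w_2(X)=0$'' via Wu's identity $\langle w_2(X),\alpha\rangle \equiv Q_X(\alpha,\alpha) \pmod 2$ does use simple connectivity in a slightly hidden way: one needs every class in $H^2(X;\Z/2)$ to be the mod-2 reduction of an integral class (so that evenness of the integral form forces $w_2$ to pair trivially with everything) and one needs the mod-2 intersection pairing to be nondegenerate. Both follow from $H_1(X)=0$ (hence $H^3=0$ and $H_2$ free), so it is worth flagging where the hypothesis enters. The index computation $\operatorname{ind}(D) = \hat A(X)[X] = -\sigma(X)/8$ and the quaternionic structure on $S^\pm$ in dimension $4$ (making $\ker D$ and $\operatorname{coker} D$ even-dimensional over $\C$) are exactly the Atiyah--Hirzebruch refinement and together give $\sigma(X)\equiv 0\pmod{16}$. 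You are right that this is necessarily a black-box argument in the context of these notes, which is presumably why the authors chose to cite rather than prove the result.

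One small caution on your concluding remark: the ``Kervaire--Milnor-type congruence with the empty characteristic surface'' is a pleasant way to \emph{remember} why Rohlin's theorem should follow from the characteristic-surface refinement $\sigma(X) - [\Sigma]^2 \equiv 8\,\operatorname{Arf}(\Sigma) \pmod{16}$, but it is not a shortcut to a proof: the refinement (due to Freedman--Kirby and Guillou--Marin) is itself proved \emph{assuming} Rohlin's theorem in the spin case, so invoking it with $\Sigma=\emptyset$ is circular. Rohlin's original argument is genuinely different, going through a careful study of stable homotopy groups of spheres and framed cobordism; it does not reduce to the trivial Arf computation you describe.
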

	\begin{remark} Rohlin's theorem holds for any
            closed \emph{spin} 4-manifold. For a simply connected 4-manifold $X$ (possibly with nonempty boundary), the spin condition is equivalent to $Q_X$ being even.
	\end{remark}
	
	Note that Rohlin's theorem does not hold for 4-manifolds with nonempty boundary; indeed, the $E_8$-plumbing manifold from Example~\ref{ex:E8} provides a counterexample. Moreover, it follows that the Poincar\'e homology sphere cannot bound a smooth integer homology ball, otherwise we could glue it to the $E_8$-plumbing manifold to produce a smooth, closed 4-manifold that violates Rohlin's theorem (see Exercise~\ref{exer:glue}(c)). Remarkably, Freedman showed that this ``closing off'' strategy can always be applied in the topological setting.
	
	\begin{theorem}[Freedman \cite{freedman}]\label{thm:contractible}
		Every integer homology sphere bounds a compact, contractible topological 4-manifold.
	\end{theorem}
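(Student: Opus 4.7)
The plan is to produce a contractible topological $4$-manifold bounding $Y$ by starting from a smooth, simply connected bounding manifold provided by Theorem~\ref{thm:bound} and then systematically killing its second homology via Freedman's topological surgery machinery.

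Fix a smooth $W^4$ with $\partial W=Y$, built from one $0$-handle and some $2$-handles. Then $\pi_1(W)=1$, $H_k(W)=0$ for $k\notin\{0,2\}$, and $H_2(W)\cong \Z^n$. Since $Y$ is a $\Z HS^3$, the intersection form $Q_W$ is unimodular. A contractible bound requires killing $H_2(W)$ without disturbing $\partial W$. Smoothly this is obstructed (for example, the Poincar\'e homology sphere bounds the $E_8$-plumbing but cannot bound a smooth integer homology ball on pain of violating Rohlin's Theorem~\ref{thm:rohlin}), so the argument must be genuinely topological.

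The key input is Freedman's topological disk embedding theorem in the simply-connected category: an immersed framed $2$-sphere in a simply-connected topological $4$-manifold whose algebraic self-intersection vanishes and which admits a framed immersed dual sphere is homotopic to a locally flat topological embedding. To apply this, one first stabilizes $W$ to $N:=W\,\natural\,(M\setminus\mathring{B}^4)$, where $M$ is the closed simply-connected topological $4$-manifold of intersection form $-Q_W$ supplied by Freedman's classification theorem. Then $\partial N=Y$ and $Q_N\cong Q_W\oplus(-Q_W)$ is hyperbolic of rank $2n$, admitting a Lagrangian basis $\{e_1,\ldots,e_n\}$ together with geometric duals $\{f_1,\ldots,f_n\}$ satisfying $e_i\cdot f_j=\delta_{ij}$. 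Simple connectivity of $N$ plus general position realize each $e_i$ and $f_i$ by framed immersed $2$-spheres with the prescribed intersection data, and the embedding theorem promotes these to pairwise-disjoint, locally flat topological embeddings. A sequence of topological surgeries on the Lagrangian and (carefully reorganized) dual spheres then kills $H_2$ while keeping $\pi_1$ trivial: each surgery on $e_i$ cuts down $H_2$, and the geometric dual $f_i$ provides the disk that makes the newly created $\pi_1$-generator null-homotopic. A Mayer--Vietoris computation that uses $Y$ being a $\Z HS^3$ then gives $H_*(C)\cong H_*(\mathrm{pt})$ for the outcome $C$, and Hurewicz together with Whitehead upgrade this to contractibility, with $\partial C=Y$ preserved throughout.

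The main obstacle, and by far the deepest ingredient, is the topological disk embedding theorem itself (and, as a corollary of the same machinery, Freedman's classification of closed simply-connected topological $4$-manifolds by intersection forms, invoked above to produce $M$). Both rest on the construction and analysis of \emph{Casson handles}: infinite towers of kinky handles designed to absorb the self-intersections of an immersed disk. The crux is Freedman's celebrated theorem that every Casson handle is homeomorphic (though emphatically not diffeomorphic) to a standard open $2$-handle $D^2\times\mathring{D}^2$, proved via delicate Bing-style decomposition space theory applied to the tower. Granting this homeomorphism, the surgery bookkeeping above is essentially routine handle theory; all of the difficulty is concentrated in Freedman's topological embedding machinery.
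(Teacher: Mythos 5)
Your proposal is correct, but it takes a genuinely different route from the one sketched in the paper. Both rest on Freedman's Casson-handle machinery and the resulting disk/sphere embedding theorem in simply connected topological $4$-manifolds, but the way the contractible manifold is ``closed off'' differs. The paper follows the Freedman--Quinn approach: surger $Y\times[-1,1]$ (first along loops, then along topologically embedded $2$-spheres) to produce a simply connected topological $\mathbb{Z}$-homology cobordism $W$ from $Y$ to $Y$ with $H_2(W)=0$; stack infinitely many copies $W\cup W\cup\cdots$; and take the one-point compactification. The delicate point there is verifying that the result is a manifold at the added point, which is where the decomposition-space/end analysis is concentrated. You instead begin with a smooth bounding $4$-manifold from Theorem~\ref{thm:bound}, stabilize by boundary-connect-summing with a closed simply connected topological $4$-manifold carrying the opposite intersection form (which invokes the existence half of Freedman's classification theorem), and then kill a Lagrangian by topological surgery; the role played in the paper by the stacking-and-compactification step is played in your version by the classification theorem. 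Two small points to tighten: (i) $Q_W\oplus(-Q_W)$ is the even hyperbolic form precisely when $Q_W$ is even, so use the ``moreover'' clause of Theorem~\ref{thm:bound} to arrange this (although metabolicity together with geometric duals, which holds regardless, is all the surgeries actually need); and (ii) one surgers only the Lagrangian spheres $e_i$ --- the sphere embedding theorem makes the $e_i$ disjointly embedded, each equipped with a transverse sphere $f_i$, but the $e_i$ and $f_i$ cannot all be pairwise disjoint --- and the transverse spheres are not themselves surgered but serve to keep $\pi_1$ trivial after each surgery.
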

	
	Given an integer homology 3-sphere $Y$, the proof of this theorem in \cite{freedman-quinn} goes roughly as follows: Beginning with $Y \times [-1,1]$, one can perform a sequence of surgeries on loops and topologically embedded spheres in the interior of $Y \times [-1,1]$ (i.e., replacing copies of $S^1 \times D^3$ with $D^2 \times S^2$, or vice versa) to produce a new 4-manifold $W$ with  $\partial W = Y \sqcup -Y$, $\pi_1(W)=0$, and $H_2(W)=0$. Next, glue infinitely many of these end-to-end to produce an infinite stack  $W \cup W \cup \cdots$ and take its 1-point compactification. The resulting  space has boundary $Y$, is contractible, and can be shown to be a topological manifold (i.e., even at the compactification point).

	\begin{proof}[Sketch for Theorem~\ref{thm:non-smoothable}]
		Let $X$ denote the $E_8$-plumbing manifold. Since $E_8$ is unimodular, $\partial X$ is a $\Z HS^3$ and thus bounds some   contractible topological 4-manifold  $C$  (Theorem~\ref{thm:contractible}). Then  $X \cup -C$ is a closed, simply connected topological 4-manifold that still has intersection form $E_8$ (because $C$ is contractible). By Rohlin's theorem (Theorem~\ref{thm:rohlin}), this 4-manifold $X \cup -C$ cannot admit a smooth structure.
	\end{proof}

	\begin{exercise}
		Find another closed, simply connected topological 4-manifold (other than $E_8$) that does not admit a smooth structure.
	\end{exercise}
	
	\subsection{Addendum: Rohlin's invariant and beyond}
	
	As mentioned in Theorem~\ref{thm:bound}, every closed, orientable, connected 3-manifold $Y$ bounds a smooth, simply connected, compact 4-manifold with $Q_X$ even. Provisionally, we can then define a quantity
	$$\mu(Y)=\frac{1}{8} \sigma(X) \mod 2.$$
	When $Y$ is a $\Z HS^3$, this quantity is an invariant of $Y$: If $Y$ bounds $X$ and $X'$ with the desired properties, then applying Rohlin's theorem to $X \cup -X'$ gives$$\sigma(X)-\sigma(X')=\sigma(X \cup -X') \equiv 0 \mod 16 \implies \frac{1}{8}\sigma(X) \equiv \frac{1}{8}\sigma(X') \mod 2.$$
	
	\begin{remark}We can obtain an invariant in the general setting  where  $H_1(Y) \neq 0$ by considering \emph{spin} structures. The even type of $X$ ensures that $X$ admits a spin structure, and it is unique if $H^1(X;\Z/2\Z)=0$. The spin structure on $X$ restricts to a spin structure $s$ on $Y$. The quantity $\mu(Y,s)$ is defined as above, but only  using 4-manifolds $X$ over which $s$ extends.
	\end{remark}
	
	\begin{example}
		For the Poincar\'e homology sphere $P=\Sigma(2,3,5)$ we can use the $E_8$-plumbing as $X$, so
		$$\mu(P)=\frac{1}{8} \sigma(E_8) =\frac{1}{8} (8) = 1 \pmod 2$$
	\end{example}

	\begin{exercise}
		If $Y$ is a $\Z HS^3$ with $\mu(Y)\equiv 1$, then $Y$ cannot bound a smooth integral homology 4-ball ($\Z HB^4$).
	\end{exercise}

	A priori, the obstruction from Rohlin's invariant $\mu$ does not obstruct $Y \# Y$ from bounding a $\Z HB^4$. However, in 2013, Manolescu used Pin(2)-equivariant Seiberg-Witten theory to prove:

	\begin{theorem}[Manolescu \cite{manolescu}]
		If $Y$ is a $\Z HS^3$ with $\mu (Y) \equiv 1 \! \mod 2$, then $Y \# Y$ does not bound a $\Z HB^4$.
	\end{theorem}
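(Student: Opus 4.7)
The plan is to reduce the statement to the existence of an integer-valued invariant $\beta(Y)\in\Z$ of integer homology $3$-spheres with three properties: (i) $\beta$ descends to a well-defined function on the integer homology cobordism group $\Theta^{3}_{\Z}$; (ii) $\beta(-Y)=-\beta(Y)$; and (iii) $\beta(Y)\equiv\mu(Y)\pmod 2$. Granted such a $\beta$, the theorem is essentially formal. By the very definition of $\Theta^{3}_{\Z}$, the assumption that $Y\#Y$ bounds a $\Z HB^{4}$ is equivalent to $[Y]=-[Y]$ in $\Theta^{3}_{\Z}$, that is, to $Y$ being integer homology cobordant to $-Y$. Combining (i) with (ii) yields $\beta(Y)=\beta(-Y)=-\beta(Y)$, so $\beta(Y)=0$, contradicting (iii) together with the hypothesis $\mu(Y)\equiv 1\pmod 2$.

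Essentially all the work lies in constructing $\beta$, and my plan is to extract it from $\mathrm{Pin}(2)$-equivariant Seiberg-Witten Floer theory. The key geometric input is that on a $\Z HS^{3}$ the unique spin structure equips the spinor bundle with a quaternionic structure, so the Seiberg-Witten configuration space carries an action not merely of $S^{1}$ (the standard gauge group) but of the non-abelian group $\mathrm{Pin}(2)=S^{1}\cup j\cdot S^{1}\subset\mathbb{H}^{\times}$, with $j$ implementing charge conjugation. Following the finite-dimensional approximation framework of Bauer-Furuta and its Floer-theoretic extension, one attaches to $Y$ a $\mathrm{Pin}(2)$-equivariant stable homotopy type, from which $\beta(Y)$ is read off as a specific numerical level of the resulting $\mathrm{Pin}(2)$-equivariant (co)homology, detected by the nontrivial action of $j$ (so that $\beta$ is genuinely sensitive to the larger symmetry rather than reducing to a known $S^{1}$-equivariant invariant).

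The hard part is establishing (i)-(iii) simultaneously. For (i), one builds $\mathrm{Pin}(2)$-equivariant cobordism maps from a smooth cobordism and checks that an integer homology cobordism induces an equivalence compatible with the extraction of $\beta$; the arguments parallel the $S^{1}$-equivariant Seiberg-Witten story but must respect the larger symmetry group. Property (ii) should come from a Poincar\'e-Lefschetz duality between the Floer theories of $Y$ and $-Y$, interwoven with the $j$-action in such a way that the chosen level flips sign rather than remaining fixed. The most delicate step is (iii): one has to identify the $\mathrm{Pin}(2)$-equivariant grading shift with an index-theoretic quantity computed on any smooth spin $4$-manifold $X$ bounding $Y$, and, via the Atiyah-Singer index theorem combined with Rohlin's theorem (Theorem~\ref{thm:rohlin}), match this modulo $2$ with $\sigma(X)/8$, i.e.\ with $\mu(Y)$. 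Once (i)-(iii) are in hand, the short formal argument above finishes the proof.
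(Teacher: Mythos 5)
The paper itself does not prove this theorem; it is stated and cited to Manolescu without argument, since a proof lies far beyond the scope of these lecture notes. So there is no in-paper proof to compare against, but your blind sketch is a faithful outline of Manolescu's actual argument. The formal reduction is exactly right: if $Y\#Y$ bounds a $\Z HB^4$, then $2[Y]=0$ in the homology cobordism group $\Theta^3_\Z$, so $Y$ is homology cobordant to $-Y$; any invariant $\beta$ satisfying your (i)--(iii) then forces $\beta(Y)=0$ and hence $\mu(Y)\equiv 0\pmod 2$. Note that nothing in this deduction requires $\beta$ to be a homomorphism, which is fortunate because Manolescu's $\beta$ is only a function on $\Theta^3_\Z$, not a group homomorphism. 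You have also correctly located the source of $\beta$: it is the middle of the three correction terms $\alpha,\beta,\gamma$ extracted from the $\mathrm{Pin}(2)$-equivariant Seiberg--Witten Floer stable homotopy type, where the extra $j\in\mathrm{Pin}(2)$ symmetry comes from the quaternionic structure on spinors over a spin 3-manifold. The orientation-reversal behavior in Manolescu's paper is $\alpha(-Y)=-\gamma(Y)$, $\gamma(-Y)=-\alpha(Y)$, and $\beta(-Y)=-\beta(Y)$; only $\beta$ is genuinely anti-symmetric, which is precisely why it is the one needed here, so your emphasis on property (ii) is well placed. The proof of (iii) does run through a relative index computation and Rohlin's theorem, as you indicate. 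In short: the proposal is correct and reflects the standard route; the paper offers no competing argument.
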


	Combining this with work of Galewski-Stern and Matumoto, this enabled Manolescu to resolve the higher-dimensional Triangulation Conjecture:

	\begin{theorem}[\cite{manolescu}]
		In all dimensions $ \geq 5$, there exists a closed, connected, topological manifold that is not triangulable (i.e., not homeomorphic to any simplicial complex).
	\end{theorem}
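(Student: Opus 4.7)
The plan is to deduce the existence of non-triangulable manifolds from a reformulation that converts the problem into a statement about the homology cobordism group of integer homology 3-spheres. Let $\Theta^H_3$ denote the set of $\Z HS^3$'s modulo the relation $Y_1 \sim Y_2$ iff $Y_1 \# (-Y_2)$ bounds a smooth $\Z HB^4$; this is an abelian group under connected sum, with the class of $S^3$ as identity. The key observation is that the Rohlin invariant descends to a well-defined homomorphism $\mu \colon \Theta_3^H \to \Z/2$, because if $Y_1 \# (-Y_2)$ bounds a smooth $\Z HB^4$ then one can glue even 4-manifolds bounding $Y_1$ and $-Y_2$ across this ball and apply the argument already used to show that $\mu$ is well-defined.

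Next I would invoke (not reprove) the Galewski-Stern and Matumoto reduction. Their work, via obstruction theory and the Kirby-Siebenmann invariant, identifies the obstruction to triangulating a closed topological $n$-manifold with $n\geq 5$ with a class living in $H^5(M;\ker \mu)$, and shows that triangulable manifolds exist in every such dimension if and only if the short exact sequence
\[
0 \longrightarrow \ker \mu \longrightarrow \Theta_3^H \xrightarrow{\ \mu\ } \Z/2 \longrightarrow 0
\]
splits, i.e., if and only if there exists a class $[Y] \in \Theta_3^H$ of order $2$ with $\mu([Y]) \equiv 1 \pmod 2$. Concretely, such a splitting would require a $\Z HS^3$ $Y$ with $\mu(Y) \equiv 1$ such that $Y \# Y$ bounds a smooth $\Z HB^4$.

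The final step is to apply Manolescu's theorem, which asserts exactly the nonexistence of such a $Y$: for any $\Z HS^3$ with $\mu(Y) \equiv 1 \pmod 2$, the double $Y \# Y$ does not bound a $\Z HB^4$. (Concretely, the Poincar\'e homology sphere $P$ satisfies $\mu(P)=1$, so $P$ gives an element of $\Theta_3^H$ with nontrivial Rohlin invariant but which by Manolescu has infinite order, or at least order $> 2$.) Thus the sequence above does not split, and by the Galewski-Stern-Matumoto reduction, for each $n \geq 5$ there must exist a closed, connected topological $n$-manifold that is not homeomorphic to any simplicial complex.

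The main obstacle is not the argument above, which is a short logical chain once the inputs are in hand, but rather each of the two inputs themselves: the Galewski-Stern-Matumoto machinery (a deep piece of geometric topology translating triangulability into a homology-cobordism question via the Kirby-Siebenmann class and a Bockstein argument) and Manolescu's theorem (whose proof requires Pin(2)-equivariant Seiberg-Witten Floer homotopy and the construction of the numerical invariants $\alpha, \beta, \gamma$ refining $\mu$). Both of these are taken as black boxes; the contribution of this proof is simply to observe that they combine to resolve the Triangulation Conjecture negatively in all dimensions $\geq 5$.
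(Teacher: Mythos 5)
The paper does not prove this theorem; it is stated as a cited result, with only the one-line remark that it follows by combining Manolescu's theorem (the preceding result in the text) with the work of Galewski-Stern and Matumoto. Your proposal correctly spells out the standard reduction that the paper leaves implicit: defining the homology cobordism group $\Theta^H_3$, observing that Rohlin's invariant descends to a homomorphism $\mu\colon \Theta^H_3 \to \Z/2$, invoking the Galewski-Stern-Matumoto machinery to convert the triangulation question in all dimensions $\geq 5$ into the question of whether the short exact sequence $0 \to \ker\mu \to \Theta^H_3 \to \Z/2 \to 0$ splits, and then applying Manolescu's theorem, which says precisely that no $Y$ with $\mu(Y)\equiv 1$ can have $Y\#Y$ bound a $\Z HB^4$, hence no order-two splitting element exists. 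This is the correct chain of reasoning, and your proposal is appropriately explicit about which ingredients are being used as black boxes.

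One small caveat: the parenthetical about the Poincar\'e homology sphere $P$ slightly muddies the logic. The non-splitting argument does not rest on showing $P$ itself has order $>2$; it requires that \emph{no} element of $\Theta^H_3$ with nontrivial Rohlin invariant has order two. You state this correctly in the sentence before, so the parenthetical is merely an imprecise illustration rather than an error, but as written it could give the impression that exhibiting a single high-order element suffices. Likewise, the description of the obstruction class in $H^5(M;\ker\mu)$ is a paraphrase: it is the image of the Kirby-Siebenmann class $\Delta(M)\in H^4(M;\Z/2)$ under the Bockstein associated to the coefficient sequence. Neither of these affects the correctness of the overall argument.
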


	\medskip
	\bigskip
	
	\section*{\textbf{Lecture 3: The minimal genus problem}}

	\medskip	
	\addtocounter{section}{1}
	\setcounter{subsection}{0}
	\setcounter{theorem}{0}

	Previous lectures covered the core algebraic topology of smooth 4-manifolds and surveyed some important theorems that (to a significant degree) reduce the study of closed, simply connected, topological 4-manifolds to algebraic topology. In this lecture, we will see how embedded surfaces can shed light on the topology of a 4-manifold $X$ --- seeing beyond the algebraic topology and continuous topology, peering directly into the smooth structure on $X$.  The central question in this lecture is motivated by the fact that any second homology class  in a smooth 4-manifold can be represented by a smoothly embedded surface (cf. Exercise~\ref{ex:represent}).
	
	\begin{problem}[The Minimal Genus Problem]
		Given a smooth 4-manifold $X$ and a class $\alpha \in H_2(X;\Z)$, what is the minimal genus realized by a closed, orientable, smoothly embedded surface $\Sigma \subset X$ with $[\Sigma]= \alpha \in H_2(X;\Z)$?
	\end{problem}
	
\smallskip
	
	{3.0. \textbf{The goal.}} 	
		We will practice describing embedded surfaces (and their tubular neighborhoods) in 4-manifolds. We will review various constraints on their genera, culminating in a sketch of the following landmark theorem. 
	
	\begin{theorem}[Donaldson \cite{donaldson}, Freedman \cite{freedman}]\label{thm:exoticR4}
		There exist smooth 4-manifolds that are homeomorphic but not diffeomorphic to $\R^4$.
	\end{theorem}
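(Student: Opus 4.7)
The plan is to combine the topological machinery of Freedman cited in the earlier lectures — in particular, that every unimodular symmetric bilinear form over $\mathbb{Z}$ is realized as the intersection form of a closed, simply-connected topological 4-manifold, unique up to homeomorphism — with the following \textbf{Donaldson Diagonalizability Theorem}, which I would take as a black box: if $N$ is a smooth, closed, simply-connected 4-manifold with positive-definite intersection form, then $Q_N \cong n\langle 1\rangle$. Equivalently, no smooth closed simply-connected 4-manifold has intersection form $\pm E_8$ as its whole intersection form. This single gauge-theoretic input is what separates the smooth and topological categories in dimension four.

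First I would exhibit a smooth, closed, simply-connected 4-manifold $M$ whose intersection form contains $-E_8$ as an orthogonal summand; the canonical choice is the $K3$ surface, which is smooth (in fact algebraic) with $Q_{K3} \cong 2(-E_8) \oplus 3H$. By Freedman's realization, there is a closed topological 4-manifold $Z$ with $Q_Z \cong -E_8$, and by Freedman's classification the $K3$ surface is homeomorphic to the topological connect sum $Z \# Z'$ with $Q_{Z'} \cong (-E_8) \oplus 3H$. This decomposition exhibits a topologically embedded separating 3-sphere $\Sigma \subset K3$. If $\Sigma$ were smoothly embedded, or smoothly isotopic to any smooth 3-sphere, then the $Z$-side would cap off to a smooth, closed, simply-connected 4-manifold with intersection form $-E_8$, contradicting Donaldson. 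So $\Sigma$ is a genuinely topological splitting sphere.

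To extract an exotic $\mathbb{R}^4$, I would take a suitable open neighborhood $R$ of the $Z$-side of $\Sigma$ inside the smooth $K3$. Using Freedman's collar structure around topologically locally flat 3-spheres, $R$ can be arranged so that it is homeomorphic to $\mathbb{R}^4$. The smooth structure $R$ inherits from $K3$ cannot be standard: in standard $\mathbb{R}^4$ every compact subset sits inside a smoothly embedded closed 4-ball, so if $R$ were standardly diffeomorphic to $\mathbb{R}^4$, one could smoothly enclose the $Z$-side by a smooth $S^3$ and thereby build a smooth, closed, simply-connected 4-manifold with $-E_8$ intersection form — violating Donaldson a second time. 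Hence $R$ is homeomorphic but not diffeomorphic to $\mathbb{R}^4$.

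The main obstacle is not the combinatorial argument above but the assembly of the cited black boxes. Donaldson's theorem rests on the moduli space of anti-self-dual $SU(2)$-instantons and a delicate cobordism/compactification argument, far beyond the scope of these notes. Freedman's realization, classification, and collaring results rely on Casson handles and the disk-embedding theorem, comparably deep. Granting these, the subtlest \emph{topological} step in the above is arranging $R$ to be homeomorphic to all of $\mathbb{R}^4$ rather than merely an open subset of it; this is the content of the standard end-sum / Casson-handle-trapping construction. Once that is in hand, the exotic smooth structure on $\mathbb{R}^4$ drops out of the mismatch between Freedman's generosity and Donaldson's rigidity.
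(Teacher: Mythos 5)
Your proposal takes a genuinely different route from the paper's. The paper constructs an exotic $\R^4$ directly from a knot $K$ that is topologically but not smoothly slice (e.g.\ $K = P(3,-5,-7)$, shown topologically slice via Freedman's $\Delta_K \equiv 1$ criterion and not smoothly slice via the Thom Conjecture): one topologically embeds the $0$-trace $X_0(K)$ into $\R^4$ using a locally flat slice disk, pushes forward the standard smooth structure on the trace, extends it over the complement using Quinn's smoothing theorem, and observes that a diffeomorphism with the standard $\R^4$ would force $K$ to be smoothly slice by the Fox--Milnor observation (equivalently, the Trace Embedding Lemma) --- a contradiction. Your argument is instead the classical Donaldson-diagonalization-plus-Freedman-classification approach, which the paper explicitly flags as the historical original in the remark following its proof. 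Both routes rest on gauge theory, just through different inputs: Kronheimer--Mrowka's Thom Conjecture for the paper, Donaldson's diagonalizability theorem for you.

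There is, however, a real gap in your extraction step. You propose to take ``a suitable open neighborhood $R$ of the $Z$-side of $\Sigma$'' and claim it can be arranged to be homeomorphic to $\R^4$. But the $Z$-side is a punctured copy of $Z$, with $H_2 \cong \Z^8$, and any open neighborhood of it in $K3$ deformation retracts onto it; so $R$ has nontrivial $H_2$ and cannot be homeomorphic to $\R^4$. No amount of ``Freedman collar'' around the locally flat $\Sigma$ repairs this, because the obstruction is homological, not about smoothness near the sphere. The genuine construction traps the smooth/topological mismatch in a small, topologically trivial region via Casson handles or end sums, which is not the same as taking a neighborhood of one side of the Freedman splitting. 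You do flag this as the subtle step, but the mechanism you invoke does not produce an open subset homeomorphic to $\R^4$. The paper's knot-trace argument sidesteps exactly this difficulty: the candidate exotic smooth structure is built on $\R^4$ itself from the start, with Quinn's theorem supplying the smoothing and the sliceness obstruction supplying the distinction.
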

	
	In contrast, in all other dimensions $n \neq 4$, there is a unique smooth structure on $\R^n$. 
	
	\subsection{The minimal genus problem} We begin with a few examples.
	
	\begin{example}
		For $X=S^2 \times S^2$,  the natural generators $\alpha=[S^2 \times \{pt\}]$ and $\beta=[\{pt\} \times S^2]$ of $H_2(X) \cong \Z \oplus \Z$ are  represented by smoothly embedded 2-spheres.
	\end{example}
	
	\begin{exercise} Consider $S^2 \times S^2$ and the generators  $\alpha,\beta \in H_2(S^2 \times S^2)$ as above.
		
		\begin{enumerate}
			\item[\textbf{(a)}] What is the homology class of the diagonal embedding $S^2 \hookrightarrow S^2 \times S^2$ given by $x \mapsto (x,x)$? What about $x \mapsto (x,-x)$ and $x\mapsto (-x,x)$?
			\item[\textbf{(b)}] For $\alpha,\beta \in H_2(S^2 \times S^2)$ as above, try to find or estimate the minimal genus of $\alpha + n \beta$ for $n \in \Z$. 
		\end{enumerate}
	\end{exercise}

Let us consider another important 4-manifold: $\CP^2$. The generator of $H_2(\CP^2) \cong \Z$ is represented by the complex projective line $\CP^1 \subset \CP^2$, which is a smooth 2-sphere.  What about the class $d [\CP^1] \in H_2(\CP^2)$ for $d \geq 2$? The next result asserts that a minimal representative is always realized by a degree-$d$ complex curve $\Sigma_d$ given by 
	$$\{[x : y : z] \in \CP^2 : x^d + y^d + z^d = 0 \}.$$
	
	\smallskip
		
	\begin{theorem}[Thom Conjecture, Kronheimer-Mrowka \cite{k-m:thom}]\label{thm:thom}
		If $\Sigma$ is a smooth oriented surface embedded in $\CP^2$ representing $d[\CP^1] \in H_2(\CP^2)$ with $d\neq 0$, then $$g(\Sigma) \geq (d-1)(d-2)/2.$$
	\end{theorem}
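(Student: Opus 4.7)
The plan is to deduce this genus bound from a Seiberg-Witten adjunction inequality, the gauge-theoretic cousin of the Stein/Bennequin-style inequality previewed in Lecture 4. The version I will invoke reads as follows: for a closed symplectic $4$-manifold $X$ with canonical class $K_X$, every smoothly embedded, oriented, connected surface $\Sigma \subset X$ with $[\Sigma] \cdot [\Sigma] \geq 0$ satisfies
$$
2 g(\Sigma) - 2 \,\geq\, [\Sigma] \cdot [\Sigma] + K_X \cdot [\Sigma],
$$
provided $\Sigma$ is oriented so that its symplectic pairing is positive. This bound is sharp, saturated by every smoothly embedded complex curve via the classical adjunction formula.

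Specializing to $X = \CP^2$: the intersection form is $\langle 1 \rangle$ on $H_2(\CP^2) \cong \Z\langle H \rangle$ with $H = [\CP^1]$, and the canonical class is $K_{\CP^2} = -3H$. Orient $\Sigma$ so that $[\Sigma] = dH$ with $d > 0$ (the case $d < 0$ is handled by reversing orientation). Then $[\Sigma]\cdot [\Sigma] = d^2$ and $K_{\CP^2} \cdot [\Sigma] = -3d$, so the inequality becomes
$$
2 g(\Sigma) - 2 \,\geq\, d^2 - 3d,
$$
which rearranges to $g(\Sigma) \geq (d-1)(d-2)/2$. The bound is realized by the Fermat curves $\Sigma_d$.

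The genuinely hard part is the adjunction inequality itself, which is where all of the content of the Kronheimer-Mrowka theorem lives. The main technical obstacle is that $b^+(\CP^2) = 1$, so the clean $b^+ \geq 2$ Seiberg-Witten package does not apply verbatim: one must track chamber structure and wall-crossing in order to isolate the signed form of the inequality above, rather than the symmetrized $|K_X \cdot [\Sigma]|$ variant --- which would be too strong and is in fact false here (it would force $g \geq (d+1)(d+2)/2$). A clean workaround is to blow up some number of times and apply the $b^+ \geq 2$ machinery in $\CP^2 \# k \overline{\CP^2}$ to a proper transform of $\Sigma$ (which has the same genus and compatible self-intersection and canonical pairing), then descend. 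The original Kronheimer-Mrowka argument instead runs $\mathrm{PU}(2)$-monopole and instanton Floer techniques directly on $\CP^2$. Either way, this gauge-theoretic input is the whole story; once it is in hand, the specialization above is immediate, and the most striking case $g(\Sigma) = 0$ with $d \geq 3$ --- which rules out smoothly embedded $2$-spheres of degree at least three in $\CP^2$ --- drops out as a direct corollary.
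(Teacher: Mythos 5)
The paper does not prove this theorem; it states it as a landmark result and attributes it to Kronheimer--Mrowka, only later deriving the special case $d=3$ from Rohlin's theorem (Proposition~\ref{prop:cubic}) by a completely different route. So your proposal, which reduces the Thom Conjecture to an adjunction inequality and then specializes, goes further than the text. The arithmetic is correct: with $K_{\CP^2} = -3H$ and $[\Sigma] = dH$, $d > 0$, the inequality $2g - 2 \geq d^2 - 3d$ does rearrange to $g \geq (d-1)(d-2)/2$, and your observation that the symmetrized $|K_X \cdot [\Sigma]|$ variant would give the false bound $g \geq (d+1)(d+2)/2$ is a good sanity check.

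There are, however, two real problems in the part of your sketch that addresses where the hard content lives. First, the proposed workaround --- blow up to reach $\CP^2 \# k\,\overline{\CP^2}$ and then invoke the clean $b^+ \geq 2$ Seiberg--Witten package --- does not work: blowing up increases $b^-$ but leaves $b^+$ unchanged, so $\CP^2 \# k\,\overline{\CP^2}$ still has $b^+ = 1$ for every $k$. The wall-crossing/chamber analysis for $b^+ = 1$ is unavoidable in this problem, and handling it correctly is a genuine part of Kronheimer--Mrowka's argument rather than something one can sidestep by a blow-up. Second, the attribution of the original proof to $\mathrm{PU}(2)$-monopole and instanton Floer techniques is historically wrong: the 1994 Kronheimer--Mrowka proof of the Thom Conjecture is one of the first and most celebrated applications of the Seiberg--Witten equations themselves. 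The instanton and $\mathrm{PU}(2)$-monopole machinery belongs to other parts of their work (e.g., structure theorems for Donaldson invariants and, much later, Property P and instanton knot homology), not to this theorem.
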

	
	For example, if $d=2$, we get $g=0$, so $2[\CP^1]$ is also represented by a smooth 2-sphere. This is illustrated in Figure~\ref{fig:2CP1} using Kirby diagrams: Beginning with the standard diagram of $\CP^2$, take two disjoint copies of the core disk of the 2-handle (equipped with the same orientation). The boundaries of these disks form an oriented Hopf link $L$ in the boundary of the 0-handle, and we observe that $L$ bounds an annulus. Gluing this annulus to the two disks yields a smooth 2-sphere in the class $2[\CP^1]$.
	
	\begin{figure}
		\center
		\includegraphics[width=.85\linewidth]{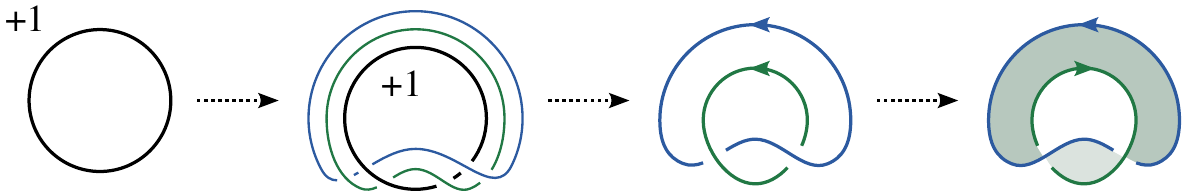}
		\caption{Constructing a smooth 2-sphere in $\CP^2$.}\label{fig:2CP1}
	\end{figure}
	
	While Theorem~\ref{thm:thom} was proven using gauge theory, the special case of $d=3$ (i.e., $g=1$) can be obtained using Rohlin's theorem (Theorem \ref{thm:rohlin}):
	
	\begin{proposition}\label{prop:cubic}
		The class $3[\CP^1] \in H_2(\CP^2)$ cannot be represented by a smoothly embedded 2-sphere.
	\end{proposition}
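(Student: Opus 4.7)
Assume for contradiction that $\Sigma \subset \CP^2$ is a smoothly embedded 2-sphere with $[\Sigma] = 3[\CP^1]$, so $\Sigma \cdot \Sigma = 9$. The plan is to build from $\Sigma$ a closed smooth \emph{spin} 4-manifold of signature $-8$, contradicting Rohlin's theorem (Theorem~\ref{thm:rohlin}, in the spin generalization noted in the subsequent Remark).

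First I would upgrade $\Sigma$ to a \emph{characteristic} sphere with trivial normal bundle by blowing up. Choose $9$ distinct points on $\Sigma$ and blow up $\CP^2$ at each to obtain $\tilde X = \CP^2 \,\#\, 9\,\overline{\CP^2}$, with hyperplane class $H$ and exceptional classes $E_1, \ldots, E_9$. The proper transform $\tilde\Sigma$ is a smoothly embedded 2-sphere in the class $3H - E_1 - \cdots - E_9$, whose self-intersection drops to $9 - 9 = 0$. Checking mod $2$ on the basis $\{H, E_1, \ldots, E_9\}$ gives $[\tilde\Sigma] \cdot H \equiv 3 \equiv 1 \equiv H^2$ and $[\tilde\Sigma] \cdot E_j \equiv 1 \equiv E_j^2 \pmod 2$, so $[\tilde\Sigma]$ is characteristic; equivalently, $PD[\tilde\Sigma] \equiv w_2(\tilde X) \pmod 2$.

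Next I would surger $\tilde\Sigma$ away. Since its normal bundle is trivial, excise $\nu(\tilde\Sigma) \cong S^2 \times D^2$ and glue in $D^3 \times S^1$ along the common boundary $S^2 \times S^1$ to form a closed 4-manifold $X'$. The complement $Y = \tilde X \setminus \nu(\tilde\Sigma)$ is spin because $w_2(\tilde X) = PD[\tilde\Sigma]$ vanishes upon restriction to $Y$ (a Poincar\'e dual of a submanifold dies on its complement), and $D^3 \times S^1$ is obviously spin; since $H^1(Y;\Z/2)=\Z/2$ yields two spin structures on $Y$, I can pick the one whose boundary restriction extends across $D^3 \times S^1$, making $X'$ spin. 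For the signature, note that $[\tilde\Sigma]\cdot E_1 = 1$ and $E_1^2 = -1$, so $\{[\tilde\Sigma], E_1\}$ spans a unimodular direct summand of $Q_{\tilde X}$ with matrix $\left[\begin{smallmatrix} 0 & 1 \\ 1 & -1 \end{smallmatrix}\right]$ and signature $0$; a Mayer--Vietoris computation shows that the surgery kills precisely this summand, giving $\sigma(X') = \sigma(\tilde X) = 1 - 9 = -8$. Because $-8 \not\equiv 0 \pmod{16}$, the manifold $X'$ violates Rohlin's theorem, ruling out $\Sigma$.

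The most delicate step is the spin-structure bookkeeping: one must check that the two spin structures on $Y$ really do restrict to the two distinct spin structures on $S^2 \times S^1$, so that a match with the unique extendable spin structure on $\partial(D^3 \times S^1)$ can be arranged (this comes down to a careful comparison of framings on the normal $S^1$-bundle of $\tilde\Sigma$). The remaining ingredients (the blow-up, the characteristic check, and the identification of the hyperbolic summand killed by surgery) are elementary linear algebra on $H_2(\tilde X)$, after which Rohlin's theorem closes the argument immediately.
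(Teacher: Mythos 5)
Your argument is correct and arrives at the contradiction with Rohlin's theorem by a genuinely different route from the paper. The paper blows up $\CP^2$ \emph{eight} times so that the proper transform becomes a $+1$-sphere, removes its tubular neighborhood (a punctured $\CP^2$ with boundary $S^3$) and caps off with $B^4$; the resulting closed, simply connected $4$-manifold has intersection form $E_8$, computed explicitly in Exercise~\ref{ex:deg3}. You instead blow up \emph{nine} times so that the proper transform is a characteristic $0$-sphere, and surger it out along its $S^2\times D^2$ neighborhood. Your construction sidesteps the explicit $E_8$ basis computation, at the cost of producing a manifold that is not obviously simply connected; this forces you to invoke the spin form of Rohlin's theorem (which the paper does record in the remark after Theorem~\ref{thm:rohlin}), rather than the simply connected statement proved there. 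The signature bookkeeping is cleanest via Novikov additivity: $\sigma(\tilde X)=\sigma(Y)+\sigma(S^2\times D^2)=\sigma(Y)$ and $\sigma(X')=\sigma(Y)+\sigma(D^3\times S^1)=\sigma(Y)$, so $\sigma(X')=-8$ with no need to identify ``the summand killed'' precisely.

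One small inaccuracy worth flagging: you assert $H^1(Y;\Z/2)\cong\Z/2$, but in fact $H^1(Y;\Z/2)=0$. Since $[\tilde\Sigma]\cdot E_1=1$, the class $[\tilde\Sigma]$ is primitive in the unimodular lattice $H_2(\tilde X)$, so the Mayer--Vietoris connecting map $H_2(\tilde X)\to H_1(S^2\times S^1)\cong\Z$, $\alpha\mapsto\alpha\cdot[\tilde\Sigma]$, is surjective and hence $H_1(Y)=0$. This only makes your life easier: $Y$ carries a \emph{unique} spin structure, and the ``delicate matching'' you worry about at the end is vacuous because \emph{both} spin structures on $S^2\times S^1$ extend over $D^3\times S^1$ (the restriction $H^1(D^3\times S^1;\Z/2)\to H^1(S^2\times S^1;\Z/2)$ is an isomorphism). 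So whichever spin structure $Y$ induces on its boundary, it extends across the surgery piece, and $X'$ is spin.
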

	
	\begin{proof}
		Towards a contradiction, suppose $\Sigma \subset \CP^2$ is a smooth 2-sphere with $[\Sigma]=3[\CP^1]$. Note that $$[\Sigma] \cdot [\Sigma] = (3[\CP^1]) \cdot (3[\CP^1]) = 9 ([\CP^1] \cdot [\CP^1])= 9,$$
		so  $\Sigma$ has  a tubular neighborhood diffeomorphic to the $D^2$-bundle over $S^2$ with Euler number $9$. This has the Kirby diagram shown on the left side of the figure below. Blowing up $\CP^2$ eight times yields $\CP^2 \# 8 \overline{\CP}^2$. In this new 4-manifold, we can consider the ``proper transform'' of $\Sigma$, a smooth 2-sphere $\Sigma'$ of square $+1$ in the homology class $3[\CP^1]+\sum_{i=1}^8 e_i$, where $e_i$ is the homology class of $\overline{\CP}^1$ in the $i^\text{th}$ copy of $\overline{\CP}^2$; this sphere $\Sigma'$ is depicted on the right side of Figure~\ref{fig:proper-transform}  as the union of a gray disk and the core of the $(+1)$-framed 2-handle. 
		
		\begin{figure}[h]
			\center
			\includegraphics[width=.45\linewidth]{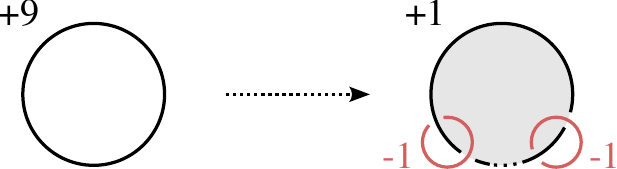}
			\caption{The proper transform of a sphere after blowing up.}\label{fig:proper-transform}
		\end{figure}
		
		Let $X$ denote the smooth, closed, simply connected 4-manifold obtained by blowing down $\Sigma'$ in $\CP^2 \# \, 8 \, \overline{\CP}^2$. It can be shown that the intersection form of $Q_X$ is the $E_8$ lattice (Exercise~\ref{ex:deg3}), hence $X$ has $\sigma=-8$ with an even intersection form, contradicting Rohlin's theorem.
	\end{proof}

	\begin{exercise}\label{ex:deg3}
		Let us calculate the intersection form of $X$. Since $\Sigma'$ is a sphere of square $+1$, it has a tubular neighborhood bounded by $S^3$, hence $H_2(\CP^2 \# 8 \overline{\CP}^2)$ splits into a subspace $V$ spanned by $v_0:=[\Sigma']$ and an orthogonal subspace $V^\perp$. After blowing down, we have $H_2(X) \cong \Z^8$ and $Q_X$ is the  restriction of $Q_{\CP^2 \# \, 8 \, \overline{\CP}^2}$ to $V^\perp$.

		\begin{enumerate}
			\item[\textbf{(a)}] Show that the classes $e_i$ do not lie in $V^\perp$, but the classes $v_i:= e_i-e_{i+1}$ do indeed lie in $V^\perp$ for $i=1,\ldots,7$. Also show that $v_i \cdot v_i = -2$.
			
			\item[\textbf{(b)}] Show that the class  $[\CP^1]$ does not lie in $V^\perp$, then find a linear combination   $v_8:=[\CP^1] + \sum_{i=1}^8 c_i e_i $ for some $c_i \in \Z$ that lies in $V^\perp$. Can you find a solution such that $v_8 \cdot v_8 = -2$? 
			
			\item[\textbf{(c)}] Verify that, with your choice of $v_8$ from (b), the collection $\{v_0, v_1,\ldots,v_8\}$ gives a basis for $H_2(\CP^2 \# 8 \overline{\CP}^2)$ (e.g., by showing that the associated change-of-basis matrix $B$ has determinant $\pm 1$).
			
			\item[\textbf{(d)}] In this new basis (up to reordering), show that $Q_{\CP^2 \# 8 \overline{\CP}^2}$ is given by $\begin{bmatrix} 1 & 0 \\ 0 & E_8 \end{bmatrix}$.  (Hint: Don't forget about Remark~\ref{rem:transpose}.)
			
			\item[\textbf{(e)}] Conclude that $Q_X \cong E_8$.
		\end{enumerate}
		
	\end{exercise}
	
	This is an example of a more general approach to minimal genus problems: If we wish to show that a second homology class $\alpha$ in a 4-manifold $X$  cannot be represented by a smooth surface of genus $g$, suppose that such a surface exists and use it to construct an auxiliary smooth 4-manifold that violates Rohlin's theorem or another smooth obstruction. An example of this concerns surfaces $\Sigma \subset X$ whose homology classes are \emph{characteristic}, i.e., $[\Sigma] \cdot [\Sigma'] \equiv [\Sigma']\cdot [\Sigma']$ mod 2 for all surfaces $\Sigma' \subset X$.
	
	\begin{theorem}[Kervaire-Milnor \cite{kervaire-milnor}]
		Let $X$ be a smooth, closed, simply connected 4-manifold. If $\Sigma \subset X$ is a smoothly embedded 2-sphere that is characteristic, then $$[\Sigma]\cdot[\Sigma]\equiv 0 \mod 16.$$
	\end{theorem}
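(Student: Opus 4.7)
The plan is to generalize the strategy from the proof of Proposition~\ref{prop:cubic}: from the hypothetical characteristic sphere $\Sigma$, build an auxiliary smooth, closed, simply connected, spin 4-manifold whose signature Rohlin's theorem forces to be divisible by $16$, and then convert that divisibility into the claimed congruence for $n := [\Sigma] \cdot [\Sigma]$. For concreteness I would assume $n > 0$; the case $n < 0$ is symmetric (blowing up with $\CP^2$'s instead of $\overline{\CP}^2$'s), and $n = 0$ is immediate.

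First, blow up $X$ at $n-1$ distinct points chosen on $\Sigma$ to form $X_1 := X \, \# \, (n-1)\,\overline{\CP}^2$. The proper transform $\tilde{\Sigma} \subset X_1$ is a smoothly embedded 2-sphere in the class $[\Sigma] - \sum_{i=1}^{n-1} e_i$, where the $e_i$ denote the exceptional classes; a direct calculation gives $[\tilde{\Sigma}] \cdot [\tilde{\Sigma}] = n - (n-1) = 1$. Then blow down $\tilde{\Sigma}$ to produce a smooth, closed, simply connected 4-manifold $X_2$. Signature bookkeeping under blow-up and blow-down gives $\sigma(X_2) = \sigma(X) - (n-1) - 1 = \sigma(X) - n$.

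The crux is verifying that $X_2$ is spin. A characteristic class of $X_1$ is $[\Sigma] + \sum_i e_i$ (since each $e_i$ is characteristic in its $\overline{\CP}^2$-summand), and this is congruent modulo $2$ to $[\tilde{\Sigma}] = [\Sigma] - \sum_i e_i$, so $\tilde{\Sigma}$ itself represents the characteristic class of $X_1$. Blowing down $\tilde{\Sigma}$ collapses precisely this class, leaving $X_2$ with trivial $w_2$ --- that is, with an even intersection form. Rohlin's theorem (Theorem~\ref{thm:rohlin}) applied to $X_2$ then yields $\sigma(X_2) \equiv 0 \pmod{16}$, which unpacks to the Kervaire--Milnor congruence $[\Sigma]\cdot[\Sigma] \equiv \sigma(X) \pmod{16}$. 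Under the (implicit) hypothesis $\sigma(X) \equiv 0 \pmod{16}$ --- automatic when $X$ is itself spin --- this reduces to the stated conclusion.

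The main obstacle is the characteristic-class bookkeeping that identifies $X_2$ as spin; one must argue carefully that the blow-down really trivializes $w_2$ and that no hidden cohomological subtlety creeps in through the $(n-1)$ blow-ups. A useful sanity check is the cubic case from Proposition~\ref{prop:cubic}: with $X = \CP^2$ and $n = 9$, eight blow-ups followed by one blow-down produce a spin $X_2$ with $Q_{X_2} \cong E_8$ and $\sigma(X_2) = -8 = \sigma(\CP^2) - 9$, exactly matching the signature formula above and exhibiting concretely the Rohlin obstruction at work.
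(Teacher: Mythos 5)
The paper states this theorem without proof, so there is nothing to compare word-for-word; but your argument is a correct implementation of exactly the strategy the surrounding text sketches (and that Proposition~\ref{prop:cubic} carries out in the special case $X = \CP^2$, $n = 9$): blow up along the sphere, blow down the resulting $(\pm1)$-sphere, and invoke Rohlin. The bookkeeping is all accurate --- $[\tilde\Sigma]$ is characteristic in $X_1$ because the $e_i$ are characteristic in the $\overline{\CP}^2$ summands, the orthogonal splitting $H_2(X_1) \cong \langle[\tilde\Sigma]\rangle \oplus \langle[\tilde\Sigma]\rangle^\perp$ identifies $Q_{X_2}$ with the restriction to $\langle[\tilde\Sigma]\rangle^\perp$, and that restriction is even precisely because $[\tilde\Sigma]$ is characteristic --- so $X_2$ is spin and Rohlin's theorem gives $\sigma(X) - n \equiv 0 \pmod{16}$.

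More importantly, you have caught a genuine error in the printed statement. As written, the theorem is false: $\CP^1 \subset \CP^2$ is a characteristic sphere of square $1 \not\equiv 0 \pmod{16}$. What Kervaire and Milnor actually proved is the congruence your argument produces, namely $[\Sigma]\cdot[\Sigma] \equiv \sigma(X) \pmod{16}$, and your closing remark correctly identifies the missing $\sigma(X)$ term. One small caveat: the $n = 0$ case is ``immediate'' only for the (faulty) conclusion $[\Sigma]\cdot[\Sigma]\equiv 0$; to obtain the correct conclusion $\sigma(X) \equiv 0 \pmod{16}$ when $[\Sigma]\cdot[\Sigma]=0$ you still need one blow-up (producing a $(-1)$-sphere $[\Sigma]-e_1$) and one blow-down, by the same argument. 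With that tiny adjustment, the proof is complete and proves the correct form of the theorem.
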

	
	For completeness, we note that a vast generalization of the Thom Conjecture is true. Indeed, the moral of the Thom Conjecture is that nonsingular complex curves in $\CP^2$ are genus-minimizing in their homology classes (and in this case the genus happens to have a simple expression in terms of the degree of the homology class). More generally, we have the following:
	
	\begin{theorem}[Symplectic Thom Conjecture, Ozsv\'ath-Szab\'o \cite{os:symplectic}] 
		A closed, embedded symplectic surface in a closed, symplectic four-manifold is genus-minimizing in its homology class.
	\end{theorem}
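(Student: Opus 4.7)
The strategy is to reduce the theorem to the Seiberg-Witten adjunction inequality, combined with Taubes' theorem that the canonical Spin$^c$ structure on a symplectic 4-manifold has nontrivial Seiberg-Witten invariant. Let $(X,\omega)$ be a closed symplectic 4-manifold with canonical class $K_\omega$, and let $\Sigma \subset X$ be a closed embedded symplectic surface. The symplectic adjunction formula gives
\[
2g(\Sigma) - 2 \;=\; [\Sigma]\cdot[\Sigma] + \langle K_\omega, [\Sigma] \rangle,
\]
and since any smoothly embedded oriented $\Sigma'$ homologous to $\Sigma$ has the same self-intersection and the same pairing with $K_\omega$, my task reduces to producing a matching lower bound for $2g(\Sigma') - 2$ from a gauge-theoretic invariant of $X$.

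First I would dispatch the case $b^+(X) > 1$. Taubes' SW $=$ Gr theorem says that the Spin$^c$ structure $\mathfrak{s}_\omega$ with $c_1(\mathfrak{s}_\omega) = -K_\omega$ is SW-basic, and positivity of intersections between pseudo-holomorphic curves ensures $\langle K_\omega, [\Sigma] \rangle \geq 0$. The standard SW adjunction inequality for a basic class then yields, for any smoothly embedded $\Sigma'$ with $g(\Sigma') \geq 1$ and $[\Sigma']\cdot[\Sigma'] \geq 0$, the bound
\[
2g(\Sigma') - 2 \;\geq\; [\Sigma']\cdot[\Sigma'] + |\langle c_1(\mathfrak{s}_\omega), [\Sigma']\rangle| \;=\; [\Sigma]\cdot[\Sigma] + \langle K_\omega, [\Sigma] \rangle \;=\; 2g(\Sigma) - 2,
\]
which is exactly what we need. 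The case $g(\Sigma) = 0$ is vacuous, and the case $[\Sigma]\cdot[\Sigma] < 0$ can be handled by symplectically blowing up $X$ enough times to push the self-intersection up while keeping the genus fixed, then applying the above.

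The main obstacle, and the real content of Ozsv\'ath-Szab\'o's paper, is the case $b^+(X) = 1$ --- essential for recovering the original Thom conjecture (Theorem~\ref{thm:thom}) in $\CP^2$, where $b^+ = 1$. Here the Seiberg-Witten invariants depend on a choice of chamber in the positive cone of $H^2(X;\R)$, wall-crossing can flip which Spin$^c$ structures are basic, and the naive adjunction inequality can fail. The plan would be to establish a chamber-sensitive refinement: cut $X$ along a carefully chosen 3-manifold (for example, the circle bundle over $\Sigma$ with Euler number $[\Sigma]\cdot[\Sigma]$ obtained from a tubular neighborhood boundary), analyze the relative Seiberg-Witten invariants on the two pieces, and combine them with Taubes' identification of the symplectic chamber to exhibit $\mathfrak{s}_\omega$ as basic in the chamber that matters. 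Once this refined bound is in hand, the three-line computation above closes the $b^+ = 1$ case, and only routine bookkeeping (connectedness reductions, blow-ups, trivial cases) remains.
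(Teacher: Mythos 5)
The paper does not prove this theorem; it is stated once (as a contextual remark on the Thom Conjecture) with a citation to Ozsv\'ath-Szab\'o and then dropped, so there is no ``paper's own proof'' to compare against. Evaluating your proposal on its own merits: the overall architecture (symplectic adjunction formula plus Taubes' nonvanishing plus the Seiberg-Witten adjunction inequality, with the $b^+=1$ chamber analysis isolated as the hard part) is the right skeleton of the Ozsv\'ath-Szab\'o argument.

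However, your treatment of the case $[\Sigma]\cdot[\Sigma]<0$ is wrong, and this is not a minor bookkeeping issue. You propose to ``symplectically blow up $X$ enough times to push the self-intersection up while keeping the genus fixed,'' but blowing up never increases self-intersection: if you blow up at a point off $\Sigma$ the square $[\Sigma]\cdot[\Sigma]$ is unchanged, and if you blow up at a point on $\Sigma$ and take the proper transform the square drops by one. There is no blowup move that raises the self-intersection of a fixed surface class. In fact, the negative self-intersection case (together with $b^+=1$) is precisely why the symplectic Thom conjecture resisted proof for years after Kronheimer-Mrowka's $\CP^2$ result; in the actual Ozsv\'ath-Szab\'o paper it is handled via a relative Seiberg-Witten invariant and an adjunction relation obtained by splitting $X$ along the circle bundle $\partial N(\Sigma)$, not by a reduction to the nonnegative case. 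Your sketch of the $b^+=1$ step gestures at the right tools (cutting along the circle bundle, relative invariants, Taubes' chamber), but collapsing the negative-square case into ``routine bookkeeping'' misidentifies where the real difficulty lives.

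A smaller caveat: the assertion that ``positivity of intersections between pseudo-holomorphic curves ensures $\langle K_\omega,[\Sigma]\rangle\geq 0$'' is not automatic. Positivity of intersections controls $[\Sigma]\cdot[C]$ for a $J$-holomorphic curve $C$ representing the canonical class, but this requires knowing such a $C$ exists, requires $\Sigma$ not to be a component of $C$, and fails outright for exceptional spheres and other low-square rational curves (e.g.\ $K\cdot[\CP^1]=-3$ in $\CP^2$). So even the ``easy'' $b^+>1$ branch needs the minimality and genus-zero reductions spelled out rather than waved through.
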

	
	\subsection{The local minimal genus problem}
	
	Every knot $K$ in $S^3$ bounds embedded surfaces inside $S^3$, but we can often find simpler surfaces bounded by $K$ if we allow the interior of the surface to be embedded in $B^4$. This motivates the following definition:
	
	\begin{definition}
		Given a knot $K$ in $S^3$, a \emph{slice surface} for $K$ is a compact, connected, orientable surface $(\Sigma,\partial \Sigma) \hookrightarrow (B^4,S^3)$ with $\partial \Sigma = K$. The \emph{slice genus} of $K$ is
		$$
		g_4(K) = \min \left\{ g(\Sigma) \mid  \text{$\Sigma$ is a slice surface for $K$}\right\}.
		$$
	\end{definition}
	
	We say a knot $K$ is \emph{slice} if it has slice genus zero. Historically, this is motivated by the observation that slice knots are precisely those knots that arise as the transverse intersection (or slice) of  an embedded 2-sphere in $S^4$ along the equatorial $S^3 \subset S^4$. 
	
	\begin{example}
		A slice disk  bounded by the stevedore knot is depicted in Figure~\ref{fig:stevedore}. Here the disk is represented diagrammatically as a self-transverse immersed disk in $S^3$; the double-points occur as ``ribbon'' intersections, and we may obtain an embedded surface in $B^4$ by pushing the interior of the disk into $B^4$ such that the intersecting sheets occur at different heights.
	\end{example}
	
	\begin{figure}
		\center
		\includegraphics[width=.225\linewidth]{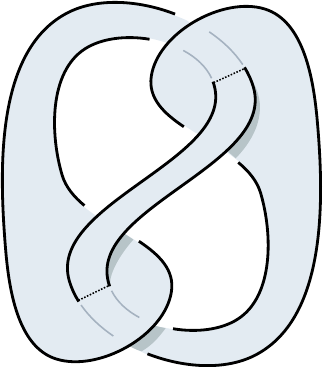}
		\caption{A ribbon-immersed disk bounded by the stevedore knot.}
		\label{fig:stevedore}
	\end{figure}
	
	\begin{example}
		The trefoil knot is \emph{not} slice, i.e., does not have slice genus zero. (Its slice genus is one.) To see this, consider a cuspidal degree-3 curve $\Sigma \subset \CP^2$; the trefoil arises as the link of the unique singularity of $\Sigma$. If the trefoil were smoothly slice, one could delete the conic singularity from $\Sigma$ and glue in the smooth slice disk to produce a smooth 2-sphere in the same homology class as $\Sigma$, namely $3[\CP^1] \in H_2(\CP^2)$. However, this would contradict Proposition~\ref{prop:cubic}.
	\end{example}

	\begin{remark}
		As discussed in \cite{acmps},  a related strategy can be used to show that the figure-eight knot is not smoothly slice; if it were, one could construct a smooth 2-sphere representing the class $(1,3)$ in $H_2(\CP^2 \# \CP^2;\Z) \cong \Z \oplus \Z$, the existence of which can be ruled out using Rohlin's theorem. Remarkably, Aceto-Castro-Miller-Park-Stipsicz \cite{acmps} show that a refinement of this approach can be used to show that the $(2,1)$-cable of the figure-eight knot is not smoothly slice, providing an alternate solution to a longstanding question (originally resolved in \cite{dkmps}). In place of Rohlin's theorem, this refinement uses obstructions from Seiberg-Witten theory on smooth spin 4-manifolds equipped with a $\Z/2\Z$-symmetry.
	\end{remark}

	We can also consider embedded surfaces in the continuous setting. To avoid pathological behavior, it is common to work with a somewhat restricted class of  topologically embedded, locally flat surfaces: 
	
	\begin{definition}[Topological embeddings]
		Let $\Sigma$ be a surface and $X$ a topological 4-manifold, each possibly with boundary. A map $\iota: \Sigma \hookrightarrow X$ is a \emph{topological embedding} if $\iota$ is a homeomorphism onto its image. The embedding is \emph{locally flat} if every point $p \in \iota(\Sigma) \subset X$ has a neighborhood in which the pair $(X,\iota(\Sigma))$ is homeomorphic to $(\R^4,\R^2 \times 0)$. 
		For brevity, we say a knot $K \subset S^3$ is \emph{topologically slice} if it bounds a topologically embedded, locally flat disk in $B^4$. 
	\end{definition}

	In many cases, there are more direct obstructions to sliceness (in both the smooth and topological settings). A key obstruction dates back to work of Fox and Milnor, who studied the question of which knots and links arise as links of singularities of piecewise-linear 2-spheres in $S^4$ \cite{fox-milnor}. If knots $K_1,\ldots,K_n$ form the links of singularities of such a sphere $\Sigma \subset S^4$, then the knot $K_1 \# \cdots \# K_n$ bounds a nonsingular disk in $B^4$. (To see this,  take a path $\gamma$ that joins the singularities,  then remove a neighborhood $U \cong \mathring{B}^4$ of $\gamma$ from $\Sigma$ to obtain a nonsingular disk $D \subset \Sigma$ that lies in $S^4 \setminus U \cong B^4$ and is bounded by $K_1 \# \cdots \# K_n$.) Fox and Milnor showed that the Alexander polynomial provides a slicing obstruction:
	
	\begin{theorem}[Fox-Milnor \cite{fox-milnor}]
		If $K$ is (smoothly or topologically) slice, then its Alexander polynomial factors as $\Delta_K(t)=f(t)f(1/t)$ for some polynomial $f \in \Z[t]$.
	\end{theorem}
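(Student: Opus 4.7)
The plan is to reduce the theorem to a claim about Seifert matrices. Fix a Seifert surface $\Sigma$ of genus $g$ for $K$, with Seifert matrix $V$ on $H_1(\Sigma;\mathbb{Z}) \cong \mathbb{Z}^{2g}$, so that $\Delta_K(t) \doteq \det(tV - V^T)$ up to units $\pm t^k$. The key structural claim is that sliceness of $K$ produces a \emph{metabolizer}: a rank-$g$ direct summand $L \subset H_1(\Sigma;\mathbb{Z})$ on which the Seifert form vanishes. Given such an $L$, picking an adapted basis puts $V$ in block form
\[
V = \begin{pmatrix} 0 & A \\ B & C \end{pmatrix}
\]
with $g \times g$ blocks. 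A short determinant calculation then yields $\det(tV - V^T) = (-1)^g \det(tA - B^T)\det(tB - A^T)$; setting $f(t) = \det(tA - B^T) \in \mathbb{Z}[t]$, the identity $\det(tB - A^T) = \det(tB^T - A) = (-1)^g t^g f(t^{-1})$ gives $\Delta_K(t) \doteq f(t)\, f(1/t)$ up to units.

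To construct the metabolizer, let $D \subset B^4$ be a slice disk for $K$ (smooth or locally flat). Push the interior of $\Sigma$ into $B^4$ and cap along $K$ with $D$, producing a closed, genus-$g$, orientable surface $\widehat{F} \subset B^4$. Since $H_2(B^4) = 0$ the class $[\widehat{F}]$ vanishes, and a transversality argument (leaning on topological normal bundle technology of Freedman-Quinn in the locally flat case) promotes this to a compact, orientable, embedded 3-manifold $M \subset B^4$ with $\partial M = \widehat{F}$. Since $D$ is a disk, inclusion induces an isomorphism $H_1(\Sigma;\mathbb{Z}) \cong H_1(\widehat{F};\mathbb{Z})$, so the kernel of $H_1(\widehat{F};\mathbb{Z}) \to H_1(M;\mathbb{Z})$ sits naturally inside $H_1(\Sigma;\mathbb{Z})$. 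The half-lives-half-dies principle — obtained by running the long exact sequence of $(M,\partial M)$ against Poincar\'e-Lefschetz duality — shows this kernel has rank $g$, and after passing to its saturation it becomes a rank-$g$ direct summand $L$ of $H_1(\Sigma;\mathbb{Z})$.

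The metabolizer property itself is checked geometrically. If $\alpha, \beta$ are curves on $\Sigma$ whose classes lie in $L$, then $\beta$ bounds a surface in $M$. The Seifert pairing $V(\alpha,\beta) = \lk(\alpha, \beta^+)$ can be reinterpreted as a four-dimensional intersection number between $\alpha$ (pushed slightly into $B^4$) and a 2-chain in $B^4$ cobounded by $\beta^+$; after arranging transversality and disjointness using the fact that $\alpha$ also bounds in $M$, this count must vanish. The main obstacle will be precisely this metabolizer construction: producing the embedded 3-manifold $M$ in the topological category, and upgrading a rational Lagrangian to an integral direct summand on which $V$ vanishes \emph{exactly} rather than merely modulo torsion — a lattice-theoretic refinement that is not automatic from half-lives-half-dies. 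Once the block form of $V$ is in hand, the determinant manipulations producing the factorization of $\Delta_K$ are routine linear algebra.
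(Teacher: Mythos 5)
The paper does not prove this theorem; it states it as a citation to Fox-Milnor and moves on. So the comparison here is really between your proof and the standard one, which you have reproduced: the metabolizer/Seifert-matrix argument is the canonical modern proof of the Fox-Milnor condition, and your outline of it is correct. The block-determinant computation is right (swapping the two $g\times g$ row-blocks contributes $(-1)^g$, block upper-triangularity gives $\det(tB-A^T)\det(tA-B^T)$, and the sign and power of $t$ wash out into the unit ambiguity $\doteq$). The construction of $M$, the half-lives-half-dies rank count, and the geometric verification that $\lk(\alpha,\beta^+)=0$ by pushing the Seifert chain $B\subset M$ off $M$ and capping with an annulus to $\beta^+\subset S^3$ are all the standard steps; the only genuinely delicate point, as you note, is carrying out the transversality argument producing $M$ in the locally flat category, for which Freedman--Quinn is exactly what is needed.

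One small correction: the ``lattice-theoretic refinement'' you flag as a potential obstacle is in fact automatic and requires no extra work. If $L'=\ker\bigl(H_1(\widehat F;\Z)\to H_1(M;\Z)\bigr)$ has rank $g$ over $\Q$ and $L\supset L'$ is its saturation (a primitive, hence split, rank-$g$ sublattice of $H_1(\Sigma;\Z)$), then for $x,y\in L$ there are nonzero integers $n,m$ with $nx,my\in L'$, so $nm\,V(x,y)=V(nx,my)=0$, and since $\Z$ is torsion-free this forces $V(x,y)=0$. In other words, a bilinear form over $\Z$ that vanishes on a finite-index subgroup vanishes on the whole thing; there is no ``modulo torsion'' loss to worry about. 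So the only real technical input beyond the smooth case is topological transversality, which you have correctly identified.
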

	
	\begin{corollary}
		If $K$ is (smoothly or topologically) slice, then the value $\left|\Delta_K(-1)\right|$, known as the determinant of $K$, is a square.
	\end{corollary}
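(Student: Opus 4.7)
The plan is to deduce this directly from the Fox--Milnor theorem just stated, since the corollary is really just an evaluation at a specific value of $t$.

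First, I would apply the Fox--Milnor theorem to obtain a factorization $\Delta_K(t) = f(t) f(1/t)$ for some polynomial $f \in \Z[t]$, granting myself the usual caveat that the Alexander polynomial is only well-defined up to multiplication by units $\pm t^k$ in $\Z[t,t^{-1}]$; in particular, the identity may really read $\Delta_K(t) \doteq f(t) f(1/t)$ (equality up to such a unit). The second step is simply to specialize the formula at $t = -1$. Since $f(1/t)$ evaluated at $t = -1$ yields $f(-1)$, the factorization gives
\[
\Delta_K(-1) \;=\; \pm(-1)^k\, f(-1) \cdot f(-1) \;=\; \pm(-1)^k\, f(-1)^2.
\]
Passing to absolute values and noting that $f(-1) \in \Z$, we conclude $|\Delta_K(-1)| = f(-1)^2$, which is a perfect square.

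There is no real obstacle here beyond book-keeping with the unit ambiguity, which is why I would present the computation exactly as above and take absolute values at the end to absorb any sign coming from the normalization. The hardest part, conceptually, is keeping straight that $f(1/t)$ and $f(t)$ agree when $t = -1$, so the two factors contribute the same integer and their product is automatically a square.
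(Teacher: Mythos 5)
Your argument is correct and is exactly the intended one: evaluate the Fox--Milnor factorization at $t=-1$, observe that $1/(-1)=-1$ so both factors become $f(-1)$, and take absolute values to absorb the unit ambiguity. The paper states this as an unproved corollary of the Fox--Milnor theorem, and your write-up supplies precisely the routine verification it leaves implicit.
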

	
	\begin{example}
		The trefoil knot has Alexander polynomial $\Delta_K(t)=t-1+t^{-1}$, hence has $|\Delta_K(-1)|=3$ and is not smoothly or topologically slice.
	\end{example}
	
	\begin{exercise}\label{exer:942}
		Find the Alexander polynomial of the knot $9_{42}$ shown in the middle of Figure~\ref{fig:not942} and use it to prove that $9_{42}$ is not (smoothly or topologically) slice.
	\end{exercise}
	
	Remarkably, Freedman showed that the Alexander polynomial can provide a sufficient condition for topological slicing:
	
	\begin{theorem}[Freedman \cite{freedman}]
		If $\Delta_K(t) \equiv 1$, then $K$ is topologically slice.
	\end{theorem}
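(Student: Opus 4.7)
The plan is to topologically cut down a pushed-in Seifert surface for $K$ to a locally flat disk by means of ambient surgery, with the deep input being Freedman's disk embedding theorem. The starting point is a Seifert surface $\Sigma$ of some genus $g$ for $K$ in $S^3$; pushing its interior slightly into $\mathring{B}^4$ produces a smoothly, properly embedded genus-$g$ surface $F \subset B^4$ with $\partial F = K$. If I can find $2g$ pairwise disjoint, locally flat embedded disks in $B^4 \setminus F$ whose boundaries form a symplectic basis of $H_1(F)$ with compatible framings, then ambient surgery along these disks drops the genus to zero and yields a topologically slice disk for $K$.

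The first step is algebraic: translate the hypothesis $\Delta_K \equiv 1$ into a statement about the Seifert form $V$ of $\Sigma$. Since $\Delta_K(t) \doteq \det(tV - V^T)$, the hypothesis forces $V$ to be unimodular in a strong sense, and a standard manipulation (possibly after stabilizing $\Sigma$ by plumbing on Hopf bands) produces a symplectic basis $\{a_1,b_1,\ldots,a_g,b_g\}$ of $H_1(\Sigma)$ in which the half-basis $a_1,\ldots,a_g$ is self-annihilating for $V$. Realizing the $a_i$ by disjoint simple closed curves on $\Sigma$, each $a_i$ is then nullhomologous in $S^3 \setminus \Sigma$, so after pushing into $B^4$ it bounds an immersed disk $\Delta_i$ with prescribed framing, algebraically disjoint from $F$ and from the other $\Delta_j$. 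This is Casson's observation that nullhomologous curves in a Seifert complement cap off by kinky disks.

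The crux of the argument, and the main obstacle, is the third step: upgrading the immersed $\Delta_i$ to disjoint, locally flat, topologically embedded disks $\widetilde{\Delta}_i$ with the same boundary and framing data. This is exactly the content of Freedman's disk embedding theorem applied in the complement of a tubular neighborhood of $F$ in $B^4$. The theorem requires the ambient fundamental group to be ``good'' in the sense of Freedman-Quinn; here that group is free on meridians of $F$, and free groups are known to be good. The input is deep and thoroughly nonsmooth: one thickens each $\Delta_i$ to a Casson handle and, via an infinite convergent tower of reimbedding moves, finds inside this Casson handle a genuine locally flat disk. This is precisely where the smooth category breaks down, so the resulting disks exist only topologically.

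Once the $2g$ disjoint, flat, embedded disks are in hand, the final step is routine ambient topological surgery: excise an annular neighborhood of each $a_i$ from $F$ and glue in two parallel copies of $\widetilde{\Delta}_i$ via the compatible framings. Each surgery drops the genus of $F$ by one, and after $g$ such surgeries one is left with a properly embedded, topologically locally flat disk in $B^4$ bounded by $K$, exhibiting $K$ as topologically slice.
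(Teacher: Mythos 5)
The paper does not actually prove this theorem --- it is stated with a citation to Freedman and accompanied only by a remark recording the full statement of \cite[Theorem 1.13]{freedman} --- so there is no argument in the text to compare against. Your outline does follow the general contour of a modern exposition (push in a Seifert surface, find a lagrangian half-basis, cap those curves with locally flat disks via the disc embedding theorem, and ambiently surger down to a disk). But as written it has genuine gaps, concentrated exactly where the hypothesis $\Delta_K\equiv 1$ must do the real work.

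First, the algebraic reduction in your second step throws away most of the hypothesis. Finding a self-annihilating half-basis for the Seifert form $V$ is just algebraic sliceness, which is far weaker than $\Delta_K\equiv 1$ (plenty of algebraically slice knots are not topologically slice --- e.g.\ by Casson--Gordon invariants). The disc embedding theorem requires the \emph{equivariant} intersection and self-intersection numbers of the immersed caps $\Delta_i$ to vanish in the group ring $\Z[\pi_1]$, not merely the ordinary $\Z$-valued linking numbers; here $\pi_1\bigl(B^4\setminus\nu(F)\bigr)\cong\Z$, so you need vanishing in $\Z[t,t^{-1}]$. That stronger vanishing is exactly what triviality of the Alexander module buys you, and your sketch never invokes it. Second, the disc embedding theorem needs algebraically (framed, immersed) dual spheres for the caps, which you do not produce; without duals the theorem is not known to hold even with trivial fundamental group. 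Third, the assertion that ``free groups are known to be good'' is false as a general statement: whether nonabelian free groups are good in the Freedman--Quinn sense is one of the central open problems of topological $4$-manifold theory. You are saved here because the complement of a connected pushed-in Seifert surface in $B^4$ has $\pi_1\cong\Z$, which \emph{is} known to be good; but the justification you give is wrong, and if the ambient group really were free of rank $\geq 2$ the argument would founder. Patching these three points --- using the full $\Z[\Z]$-level triviality coming from $\Delta_K\equiv 1$, exhibiting duals, and correctly identifying $\pi_1$ as $\Z$ --- is precisely the substance of Freedman's argument.
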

	
	\begin{example}\label{ex:pretzel}
		The pretzel knot $P(3,-5,-7)$ depicted in Figure~\ref{fig:pretzel} has trivial Alexander polynomial $\Delta(t) \equiv 1$, hence is topologically slice.
	\end{example}
	
	\begin{figure}
		\center
		\includegraphics[width=.5\linewidth]{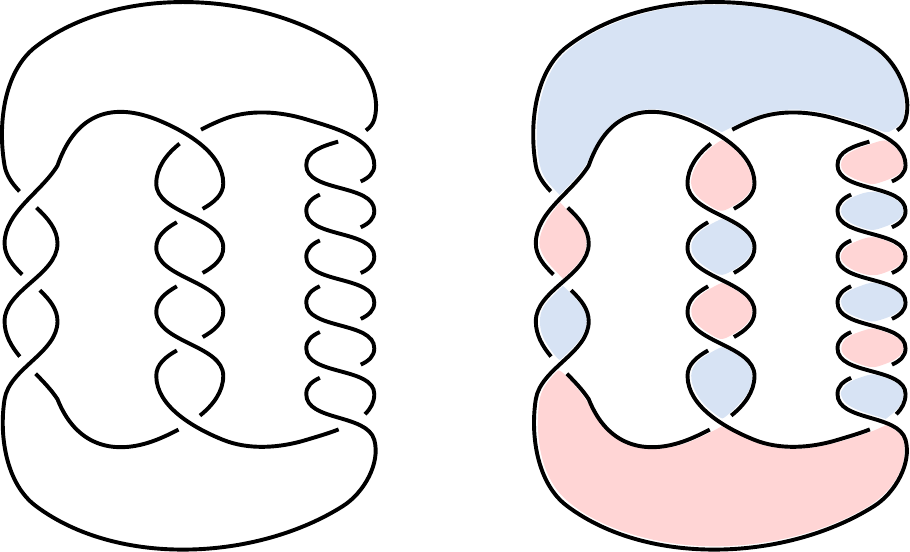}
		\caption{The pretzel knot $P(3,-5,-7)$ and a genus-one Seifert surface.}
		\label{fig:pretzel}\end{figure}
	
	This contrasts with the smooth setting:
	
	\begin{proposition}[\cite{rudolph:qp-obstruction}]\label{prop:pretzel}
		The pretzel knot $P(3,-5,-7)$ is not smoothly slice.
	\end{proposition}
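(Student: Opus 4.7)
The plan is to exhibit $P(3,-5,-7)$ as a \emph{quasipositive} knot---that is, the closure of a braid that can be written as a product of conjugates of positive Artin generators $w_i \sigma_{j_i} w_i^{-1}$---and then invoke Rudolph's slice-Bennequin inequality (equivalently, the symplectic Thom conjecture applied to a quasipositive Seifert surface) to conclude that the slice genus matches the genus of that surface, which is one.

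First, I would produce an explicit quasipositive braid word $\beta$ whose closure is $P(3,-5,-7)$. One natural route is to realize the pretzel as the transverse $C$-link of a carefully chosen piece of an algebraic curve in $\C^2$ and read off the quasipositive factorization from the corresponding branched cover description; another is a direct ad hoc combinatorial presentation (which is how Rudolph's original table entries were constructed). From a quasipositive factorization with $c$ conjugate factors on $n$ strands, the closure bounds a naturally associated \emph{quasipositive surface} $F$ pushed into $B^4$ whose genus is $g(F)=(c-n+1)/2$. I would check that the arithmetic gives $g(F)=1$, matching the genus-one Seifert surface already visible in Figure~\ref{fig:pretzel}.

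Next, I would apply Rudolph's theorem that quasipositive surfaces are genus-minimizing among smoothly embedded orientable surfaces in $B^4$ with the same boundary. This follows from the fact that $F$ can be realized as a piece of a holomorphic curve, hence a symplectic surface, inside a Stein ball, together with the slice-Bennequin inequality proved via Kronheimer-Mrowka's gauge-theoretic input (the same circle of ideas underlying the Thom conjecture cited as Theorem~\ref{thm:thom}, and made explicit in the adjunction-type inequalities to be discussed in Lecture~4). Applying this to $F$ yields $g_4(P(3,-5,-7)) = g(F) = 1$, so in particular the knot is not smoothly slice.

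The main obstacle is the first step: finding the explicit quasipositive braid word. Once that word is in hand, the genus count is a trivial combinatorial check and the gauge-theoretic input is cited as a black box. Note that, by contrast, all topological obstructions from the Alexander polynomial are vacuous here (as observed in Example~\ref{ex:pretzel}), so the separation between smooth and topological slicing is genuine and reflects the difference between $\Delta_K$-type abelian invariants and the symplectic/gauge-theoretic content captured by quasipositivity.
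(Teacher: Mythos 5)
Your proposal is correct and rests on exactly the same underlying input (the Kronheimer--Mrowka Thom Conjecture and a (strongly) quasipositive presentation of $P(3,-5,-7)$), but you cite Rudolph's genus-minimization theorem for quasipositive surfaces as a black box, whereas the paper actually carries out the argument that proves that theorem in this case. Concretely, the paper takes the genus-one braided Seifert surface $F$ for the strongly quasipositive braid, embeds it (by adding bands and sliding where necessary) into the standard Seifert surface $F'$ of the torus link $T(6,6)$, which has genus $10$, and then replaces a neighborhood of the singular point of a cuspidal degree-$6$ curve in $\CP^2$ with $F'$ to get a smooth genus-$10$ surface $\Sigma$ representing $6[\CP^1]$. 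If $P(3,-5,-7)$ were smoothly slice, cutting out $F\subset\Sigma$ and gluing in the slice disk would produce a genus-$9$ surface in the class $6[\CP^1]$, contradicting Theorem~\ref{thm:thom}. So the paper's proof is the Rudolph argument made explicit, which is what you would have to supply to fill in your citation of ``Rudolph's theorem.'' One small precision worth noting: you should say \emph{strongly} quasipositive rather than quasipositive, since the step that embeds $F$ into $F'\subset S^3$ uses the Bennequin surface of a strongly quasipositive braid in $S^3$, not merely a quasipositive surface pushed into $B^4$; the genus formula $g(F)=(c-n+1)/2$ you invoke is correct in either case, but the explicit band-embedding into the torus-link fiber requires the stronger hypothesis. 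Otherwise your route and the paper's coincide, and the only difference is which theorem is stated and which is proved.
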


	\begin{proof}
		The strategy will build on the approach from previous examples: We will first embed a genus-one Seifert surface bounded by $P(3,-5,-7)$ from Figure~\ref{fig:pretzel} into a smooth, closed surface $\Sigma$ of degree $d=6$ in $\CP^2$ that minimizes the genus in its homology class. Then we will show that, if  $P(3,-5,-7)$ were smoothly slice, we could modify $\Sigma$ to produce a smooth surface of smaller genus in the same homology class, violating  Theorem~\ref{thm:thom}. 
		
		To that end, we observe that the pretzel knot $P(3,-5,-7)$ can be expressed as the closure of a braid (in particular, a \emph{strongly quasipositive braid} \cite{rudolph:braided-surface,rudolph:kauffman-bound}) as illustrated in Figure~\ref{fig:sqp-pretzel}. The right side of Figure~\ref{fig:sqp-pretzel} depicts a natural genus-one Seifert surface $F$ associated to this braid.

	\begin{figure}[b]
		\center
		\includegraphics[width=.975\linewidth]{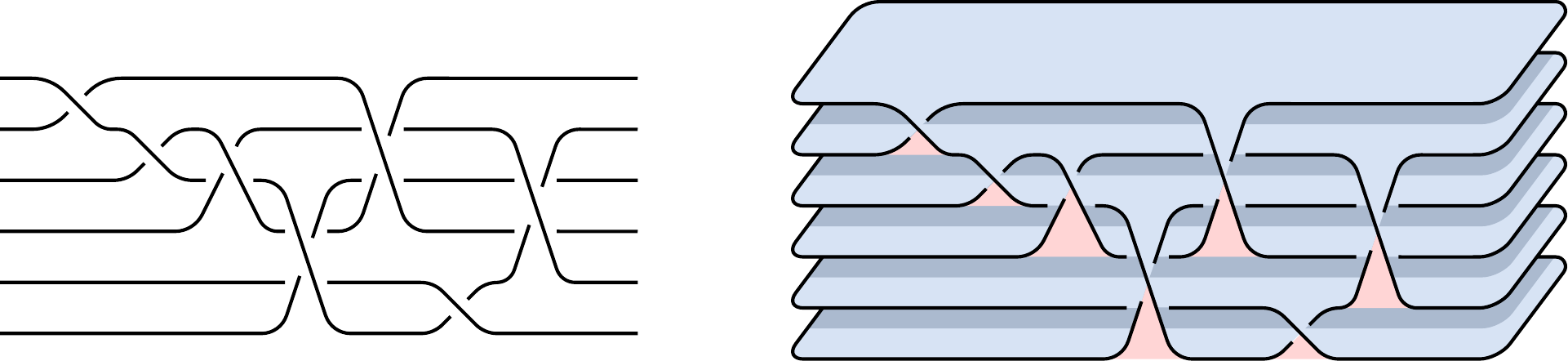}
		\caption{Expressing the pretzel knot $P(3,-5,-7)$ as a braid, together with a braided Seifert surface.}
		\label{fig:sqp-pretzel}\end{figure}
		
		By attaching bands to $F$ and, where necessary, sliding the original  bands over the new bands as in Figure~\ref{fig:band-attachment}, we can embed $F$ into the standard Seifert surface $F'$ for the torus link $T(6,6)$ depicted in Figure~\ref{fig:torus-braid}. Note that $g(F')=10$. This torus link arises as the link of the unique singularity for a cuspidal degree-6 curve in $\CP^2$. We may replace a neighborhood of the singularity with $F'$ (and smooth corners) to produce a smooth surface $\Sigma$ of genus $g(\Sigma)=g(F')=10$
		representing $6[\CP^1] \in H_2(\CP^2)$.
		
		If $P(3,-5,-7)$ were smoothly slice, then we could replace the genus-one subsurface $F \subset \Sigma$ with a slice disk  and produce a smooth, closed surface of genus 9 representing $6[\CP^1] \in H_2(\CP^2)$,  violating Theorem~\ref{thm:thom}.
	\end{proof}

	\begin{figure}
		\center
		\includegraphics[width=.65\linewidth]{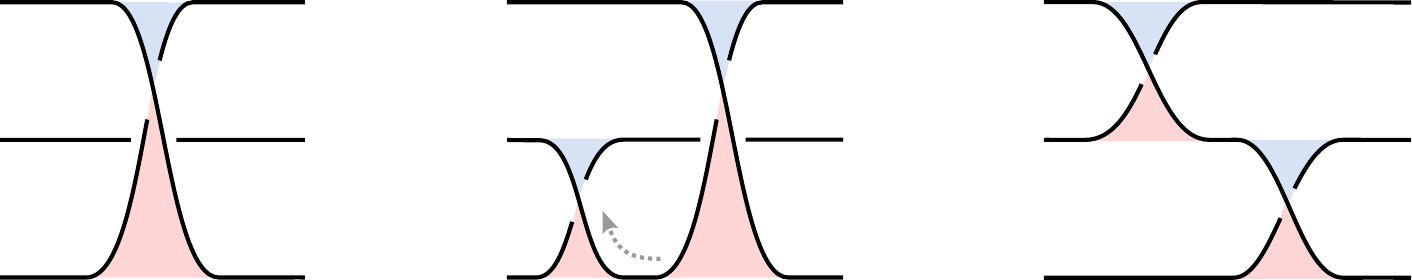}
		\caption{Adding a band and performing a band slide.}
		\label{fig:band-attachment}\end{figure}

	\begin{figure}
		\center
		\includegraphics[width=.55\linewidth]{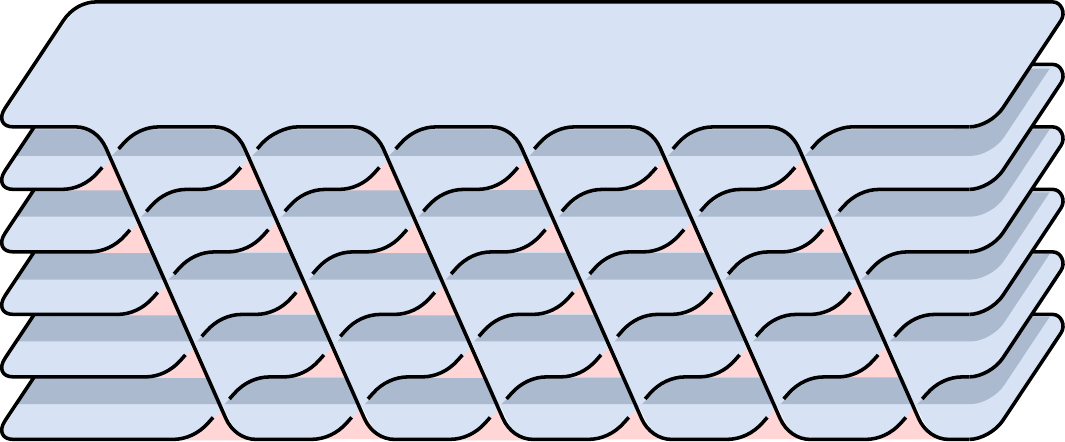}
		\caption{The standard Seifert surface for the torus link $T(6,6)$.}
		\label{fig:torus-braid}\end{figure}
	
	\begin{remark}
		Combined with Example~\ref{ex:pretzel}, the preceding argument shows that the Thom Conjecture fails in the topological category. In particular, since $P(3,-5,-7)$ is topologically slice, the construction above can be used to produce a topologically embedded, locally flat surface of genus 9 representing $6[\CP^1]\in H_2(\CP^2)$.
	\end{remark}

	\begin{remark}
		For completeness, here is the full statement from \cite[Theorem 1.13]{freedman}: \emph{Consider a smooth embedding $f:S^1 \hookrightarrow S^3$. The knot $K=f(S^1)$ has Alexander polynomial $\Delta_K(t) =1$ if and only if $f$ extends to some topological embedding $F: D^2 \hookrightarrow B^4$ such that}
		\begin{enumerate}
			\item \emph{$F$ is smooth except at a single point $p \in D^2$ which is nevertheless local-homotopically unknotted (or, equivalently, the end of $B^4 \setminus F(D^2)$ is proper homotopy equivalent to the standard end $S^1 \times D^2 \times [0,\infty)$);}
			
			\item \emph{$\pi_1\big(B^4 \setminus F(D^2)\big) \cong \Z$.}
		\end{enumerate}
	\end{remark}

	\subsection{An exotic $\boldsymbol{\mathbf{R}^4}$} To sketch the existence of an exotic $\R^4$, we will need a  result that ensures the 4-manifold we construct admits a smooth structure at all. 
	
	\begin{theorem}[Quinn \cite{quinn:ends}]\label{thm:quinn}
		Any connected  noncompact topological 4-manifold admits a smooth structure (which extends the unique smooth structure up to diffeomorphism on any boundary components).
	\end{theorem}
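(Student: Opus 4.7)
The plan has two ingredients: vanishing of a cohomological smoothing obstruction, and a delicate isotopy argument specific to the noncompact setting. First I would note that by Moise's theorem every topological 3-manifold admits a unique smooth structure, so the given smooth structure on $\partial X$ is already the unique one, and the content of the theorem is that it extends across $X$.

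The first ingredient is Kirby--Siebenmann smoothing theory, which assigns to $X$ an obstruction class $\operatorname{ks}(X) \in H^4(X, \partial X; \Z/2)$ (or $H^4(X; \Z/2)$ when $\partial X = \emptyset$) whose vanishing is necessary for the existence of a smooth structure extending the one on the boundary. When $X$ is connected and noncompact, this group vanishes by Poincar\'e--Lefschetz duality: there is no top-dimensional fundamental class against which to pair cochains, and correspondingly $H_0^{\mathrm{BM}}(X;\Z/2)=0$ because any point can be capped off by a proper ray to infinity. Hence the obstruction is automatically trivial in our setting.

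The second ingredient, and the genuine obstacle, is to upgrade this vanishing to an actual smooth structure. In dimension four, $\operatorname{ks}(X) = 0$ does not suffice in general: the closed topological 4-manifold with intersection form $E_8 \oplus E_8$ has $\operatorname{ks} = 0$ (since $\sigma/8 = 2$ is even) yet is not smoothable, by Donaldson's diagonalization theorem. Quinn's insight is that once $\operatorname{ks}(X) = 0$, the failure of smoothability can be localized to finitely many singular points inside a topological handle decomposition of $X$, and these singular points can be resolved by controlled isotopies \emph{provided} there is somewhere to push them. In a closed 4-manifold there may be nowhere to go and genuine obstructions can persist, but in a noncompact manifold one can isotope each singularity out along the end using Quinn's end theorem. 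The boundary case reduces to the boundaryless one by fixing a smooth collar $\partial X \times [0, 1)$ and applying the absolute result to its complement while respecting the collar. In a lecture setting I would treat this controlled-topology machinery as a black box --- a serious sketch would take us well beyond our scope, and it is truly Quinn's work on ends of manifolds that does the real work here. The obstruction-vanishing step above is essentially formal; the end-pushing step is where all the difficulty lives.
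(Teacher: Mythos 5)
The paper does not prove this theorem; it cites Quinn and uses the statement as a black box in the sketch of Theorem~\ref{thm:exoticR4}. So there is no proof in the paper to compare your proposal against. That said, your outline is a reasonable high-level account of why the result holds, and the two ingredients you isolate are the right ones. The cohomological step is essentially correct: the Kirby--Siebenmann class lies in $H^4(X,\partial X;\Z/2)$, and for a connected noncompact $4$-manifold this group vanishes by Lefschetz duality with $H_0^{\mathrm{BM}}(X;\Z/2)$, exactly as you say. Your $E_8\oplus E_8$ example (with $\sigma/8 \equiv 0 \bmod 2$ but no smooth structure by Donaldson) is a good illustration of why this vanishing is far from sufficient in dimension four, and it makes clear that the real content lies in Quinn's second step.

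One thing worth sharpening if you were to flesh this out: the precise form of Quinn's localization is that any topological $4$-manifold is smoothable in the complement of a locally finite, discrete set of points (this is sometimes stated as ``smoothable away from points''), and the ``pushing to infinity'' is implemented via Quinn's controlled $h$-cobordism and handle-straightening results, which in turn rest on Freedman's disk embedding theorem. You are right that in a lecture one should treat this machinery as a black box, and you are also right that Moise's theorem handles the uniqueness of the smooth structure on the boundary; the collar reduction you describe for the boundary case is the standard one. Your sketch is about as much as could reasonably be said here without reproducing a substantial portion of Freedman--Quinn.
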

	
	This result in hand, we can sketch the existence of an exotic $\R^4$.

	\begin{proof}[Sketch for Theorem~\ref{thm:exoticR4}.]
		Let $K$ be any knot in $S^3$ that is topologically but not smoothly slice in $B^4$; for example, one may take $K=P(3,-5,-7)$ by Example~\ref{ex:pretzel} and Proposition~\ref{prop:pretzel}. Viewing $S^4$ as a union of two 4-balls $B^4_-$ and $B^4_+$, we write $$\R^4 = S^4\setminus\infty = B^4_-\cup (B^4_+\setminus 0)$$
		
		Let $K$ be embedded in the equatorial $S^3=\partial B^4_\pm$, and let $\Delta \subset (B^4_+ \setminus 0)$ be a topological slice disk bounded by $K$. Since $\Delta$ is a topologically embedded locally flat disk, it has a tubular neighborhood $N(\Delta)\cong \Delta \times D^2$. We may view $N(\Delta)$ as a 2-handle and attach it to $B^4_-$ to produce a topological embedding of a 4-manifold $X=B^4 \cup_{N(K)} D^2 \! \times \! D^2$ into $\R^4$. (This is an example of a \emph{knot trace}; see Exercise~\ref{exer:trace}.)  Let $f: X \hookrightarrow \R^4$ denote this embedding and set $E= \R^4 \setminus f( \mathring{X})$. 
		
		Now let us define an alternative smooth structure on the underlying topological manifold $\R^4$, yielding a new smooth 4-manifold $\mathcal{R}$: On $f(X) \subset \R^4$, the smooth structure is defined to be the pushforward of the standard smooth structure on $X=B^4 \cup_{N(K)} D^2 \! \times \! D^2$. By Theorem~\ref{thm:quinn}, we can extend this to a smooth structure on $E=\R^4 \setminus f(\mathring{X})$. By construction, this new smooth 4-manifold $\mathcal{R}$ is homeomorphic to $\R^4$ (since we have not altered the underlying topological manifold). 
		
		Now observe that $\mathcal{R}$ contains an embedded 2-sphere formed from the cone on $K$ inside $B^4_-$ and a copy of the core disk $D^2 \! \times \! 0$ inside the 2-handle in $D^2 \! \times \! D^2$. This 2-sphere is smooth away from its unique conical singularity over $K$. If $\mathcal{R}$ were diffeomorphic to $\R^4$, then the Fox-Milnor observation discussed above would imply that $K$ is smoothly slice in $B^4$, a contradiction. We conclude that $\mathcal{R}$ is an exotic  $\R^4$.
	\end{proof}

	As a historical remark, we note that the existence of an exotic $\R^4$ was originally proven using a different construction and obstruction. In particular, an exotic $\R^4$ is found as a certain subspace of the \emph{Kummer surface}, commonly known as $K3$, which is given by the equation
	$$K3 = \{[z_0:z_1:z_2:z_3] \in \mathbb{C}P^3 \, \mid \, z_0^4 +z_1^4+z_2^4+z_3^4 =0\}.$$
	
	The relevant obstruction comes from Donaldon's Diagonalization Theorem:
	
	\begin{theorem}[Donaldson \cite{donaldson}]
		Let $X$ be a smooth, closed, oriented 4-manifold. If the intersection form $Q_X$ is definite, then $Q_X$ is diagonalizable (hence equivalent to  $\pm I_n$).
	\end{theorem}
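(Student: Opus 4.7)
The plan is to invoke Donaldson's original gauge-theoretic argument via the moduli space of anti-self-dual (ASD) instantons. After reversing orientation if necessary, I assume $Q_X$ is negative definite of rank $n$, so $b^+(X) = 0$ and $\sigma(X) = -n$; the goal becomes $Q_X \cong -I_n$.

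I would first fix a Riemannian metric $g$ on $X$ and choose an $SU(2)$-bundle $P \to X$ with $c_2(P) = 1$. The associated moduli space $\mathcal{M} = \mathcal{M}(P,g)$ of gauge-equivalence classes of ASD connections has expected dimension $8 c_2(P) - 3(1 + b_1(X) + b^+(X)) = 5$ (in the simply connected case; the general case requires minor adjustments). Standard Fredholm and transversality theory, combined with the vanishing of $b^+$, show that for generic $g$ the irreducible stratum $\mathcal{M}^{\ast}$ is a smooth, oriented $5$-manifold, with singularities of $\mathcal{M}$ occurring precisely at the reducible connections.

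Next, I would classify those reducibles. A reducible ASD connection arises from an orthogonal splitting $P = L \oplus L^{-1}$ for some complex line bundle $L$, and the constraints $c_2(P) = 1$ and anti-self-duality force $c_1(L)^2 = -1 \in H^4(X;\Z)$. Thus reducibles are in bijection with unordered pairs $\{\pm v\}$ of elements $v \in H^2(X;\Z)$ with $Q_X(v,v) = -1$, and a neighborhood of each reducible in $\mathcal{M}$ is modeled on the cone over $\CP^2$. The crucial analytic input is Uhlenbeck's compactness theorem: since $c_2(P) = 1$, the only noncompactness of $\mathcal{M}^{\ast}$ comes from a single concentration of curvature at a point of $X$, and a delicate gluing argument identifies the ideal boundary of $\mathcal{M}^{\ast}$ with a collar diffeomorphic to $X$ itself. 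Excising small cone neighborhoods of each reducible and truncating the bubbling collar then produces a compact, smooth, oriented $5$-manifold $W$ whose boundary is a disjoint union of $X$ together with $r$ copies of $\CP^2$ (with appropriate orientations), where $r$ is the number of reducibles. Since signature vanishes on the boundary of any compact oriented $5$-manifold and $|\sigma(\CP^2)|=1$, a careful accounting of orientations gives $r = n$.

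To finish, a short lattice-theoretic argument shows that in a negative-definite \emph{unimodular} lattice of rank $n$, the $n$ mutually compatible pairs $\{\pm v_i\}$ of vectors of square $-1$ produced above must form a $\Z$-basis, which realizes $Q_X \cong -I_n$. By far the hardest part of this program is the analytic machinery: proving smoothness and orientability of $\mathcal{M}^{\ast}$, constructing the Uhlenbeck compactification, and establishing the gluing theorem that recovers $X$ as the bubbling collar. These elliptic-analytic ingredients are what made Donaldson's theorem such a deep and surprising result, and they lie entirely outside the handle-theoretic and homological framework developed in these notes.
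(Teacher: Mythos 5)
The paper itself offers no proof of Donaldson's diagonalization theorem; it is stated as a cited black box at the end of Lecture~3, used only to motivate the historical route to an exotic $\R^4$ via the $K3$ surface. So there is no in-text argument to compare against. Your sketch is a faithful high-level account of Donaldson's original instanton proof: the degree-one $SU(2)$ moduli space of formal dimension five, the identification of the singular points with reducible connections corresponding to pairs $\{\pm v\}$ with $Q_X(v,v)=-1$ and with cone-on-$\CP^2$ neighborhoods, the Uhlenbeck/Taubes picture identifying the bubbling end with a collar diffeomorphic to $X$, the resulting oriented cobordism from $X$ to a disjoint union of copies of $\pm\CP^2$, and the signature-plus-lattice-theory finish. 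You also correctly flag that the analytic inputs (transversality, orientability of the moduli space, compactness, gluing) are the real content, and that they lie entirely outside the handle-theoretic framework of these notes.

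Two small corrections. First, the virtual dimension formula for the ASD moduli space of an $SU(2)$ bundle with $c_2 = k$ is $8k - 3(1 - b_1 + b^+)$, not $8k - 3(1 + b_1 + b^+)$; the sign on $b_1$ matters in the non--simply-connected case you allude to, even though both give $5$ when $b_1 = b^+ = 0$. Second, when you say the $n$ pairs ``must form a $\Z$-basis,'' it is worth recording the one-line lattice computation that makes this work: for distinct $(-1)$-vectors $v,w$ in a negative definite lattice, expanding $Q(v\pm w, v\pm w) \le 0$ forces $Q(v,w) \in \{-1,0,1\}$, and the extreme values would give $v = \pm w$, so distinct pairs are automatically orthogonal; $n$ pairwise-orthogonal $(-1)$-vectors in a rank-$n$ negative definite unimodular lattice then span it and exhibit $Q_X \cong -I_n$. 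With those repairs your outline is an accurate, if necessarily compressed, account of a proof the paper deliberately omits.
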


	\subsection{Addendum and additional exercises} We conclude this lecture with some additional material and exercises that focus on the relationship between smooth and singular surfaces.
	
	\begin{exercise}
		[Neighborhoods of PL spheres] Suppose that $\Sigma$ is a 2-sphere in a smooth 4-manifold $X$ that is smoothly embedded away from a finite number of singularities, each modeled on the cone of a knot in $B^4$. Show that $\Sigma$ has a neighborhood $N(\Sigma)$ that is diffeomorphic to the $n$-framed trace of some knot $K \subset S^3$, where $n$ is the self-intersection number $[\Sigma]\cdot [\Sigma]$. 
	\end{exercise}

	The next exercise presents a useful lemma that is closely related to Fox and Milnor's observation about piecewise-linear 2-spheres in $\R^4$  \cite{fox-milnor}.
	
	\begin{exercise}[Trace embedding lemma]
		Let $X(K)$ denote the 0-framed trace of a knot $K \subset S^3$. Show that $X(K)$ embeds smoothly into $\R^4$ if and only if $K$ is slice.
	\end{exercise}
	
	\begin{remark}
		In \cite{picc:conway}, Piccirillo used the Trace Embedding Lemma to help prove that the Conway knot (denoted  $11n{34}$) is not slice, resolving a longstanding question. All prior slicing obstructions fail to obstruct the Conway knot from being slice directly, including the $s$-invariant (which vanishes for all slice knots) defined by Rasmussen \cite{rasmussen:s} via Khovanov homology \cite{khovanov00}. Piccirillo's elegant proof circumvents this by identifying a second knot $K$ whose 0-trace is diffeomorphic to that of the Conway knot, and  then she shows that $s(K) \neq 0$; this implies $K$ is not slice, hence $X_0(K)$ cannot embed smoothly in $\R^4$. Since $X_0(K) \cong X_0(11n{34})$, this in turn implies that the Conway knot is not slice either. 
	\end{remark}

	\begin{exercise}As a toy example of Piccirillo's strategy, work out the following  alternative (and overkill!) proof that the knot $9_{42}$ is not slice (cf. Exercise~\ref{exer:942}): Use software to calculate the $s$-invariants of the knots in Figure~\ref{fig:not942}.\footnote{For example, use SnapPy \cite{snappy} to find a planar diagram or Dowker-Thistlethwaite code for each knot, then calculate  $s$-invariants using  KnotJob \cite{knotjob} or the {KnotTheory'} package \cite{knottheory} in Mathematica \cite{mathematica}.} You should find that the first knot $K=9_{42}$ has $s(K)=0$, whereas the second knot $K'$ has $s(K') \neq 0$. Apply the Trace Embedding Lemma to conclude that neither knot is slice.
	\end{exercise}
	
		\bigskip

	\section*{\textbf{Lecture 4: The contact and Stein setting}} 
	\addtocounter{section}{1}
	
		\setcounter{subsection}{0}
	\setcounter{theorem}{0}
		
		\medskip

	Next we will consider the geometric setting of Stein 4-manifolds and contact 3-manifolds arising as their boundaries.
	
	\subsection{Contact geometry and the slice-Bennequin inequality}
	
	Recall that a \emph{contact structure} on a 3-manifold $Y$ is a 2-plane distribution $\xi \subset TY$ that can be locally expressed as the kernel of a 1-form $\alpha$ such that $\alpha \wedge d\alpha \neq 0$; we call $\alpha$ a \emph{contact form}. 	
	\begin{example}
		The standard contact structure on $\R^3$ is given by $\xi=\ker(dz+x\, dy)$. This has  a natural extension to a standard contact structure on $S^3$; the latter   can also be realized as the set of affine complex lines that are tangent to the unit $S^3 \subset \C^2$.
	\end{example}
	
	A knot $K$ in a contact 3-manifold $(Y,\xi)$ is called \textit{Legendrian}, if it is tangent to $\xi$. Diagrammatically, Legendrian knots in the standard contact $\R^3$  are typically studied via the ``front projection'' $(x,y,z)\mapsto(y,z)$. 
	
	\begin{example}
		The front projection for a Legendrian representative of the (right-handed) trefoil is shown in Figure~\ref{fig:legtref}.
	\end{example}
	
	\begin{figure}[h]
		\centering
		\def\svgwidth{0.275\linewidth} 
\begingroup%
  \makeatletter%
  \providecommand\color[2][]{%
    \errmessage{(Inkscape) Color is used for the text in Inkscape, but the package 'color.sty' is not loaded}%
    \renewcommand\color[2][]{}%
  }%
  \providecommand\transparent[1]{%
    \errmessage{(Inkscape) Transparency is used (non-zero) for the text in Inkscape, but the package 'transparent.sty' is not loaded}%
    \renewcommand\transparent[1]{}%
  }%
  \providecommand\rotatebox[2]{#2}%
  \newcommand*\fsize{\dimexpr\f@size pt\relax}%
  \newcommand*\lineheight[1]{\fontsize{\fsize}{#1\fsize}\selectfont}%
  \ifx\svgwidth\undefined%
    \setlength{\unitlength}{279.36789926bp}%
    \ifx\svgscale\undefined%
      \relax%
    \else%
      \setlength{\unitlength}{\unitlength * \real{\svgscale}}%
    \fi%
  \else%
    \setlength{\unitlength}{\svgwidth}%
  \fi%
  \global\let\svgwidth\undefined%
  \global\let\svgscale\undefined%
  \makeatother%
  \begin{picture}(1,0.69065968)%
    \lineheight{1}%
    \setlength\tabcolsep{0pt}%
    \put(0,0){\includegraphics[width=\unitlength,page=1]{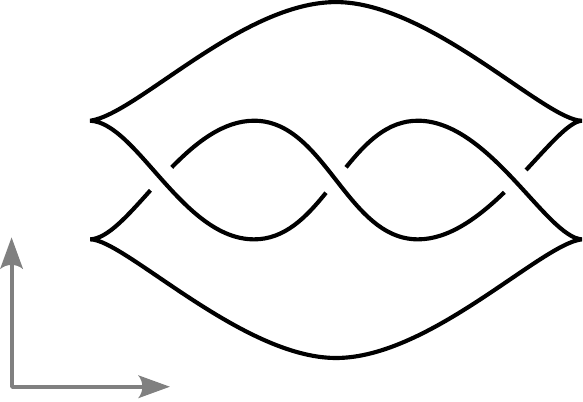}}%
    \put(-0.00087675,0.31464895){\color[rgb]{0.50196078,0.50196078,0.50196078}\makebox(0,0)[lt]{\smash{\begin{tabular}[t]{l}$z$\end{tabular}}}}%
    \put(0.30977696,0.01224977){\color[rgb]{0.50196078,0.50196078,0.50196078}\makebox(0,0)[lt]{\smash{\begin{tabular}[t]{l}$y$\end{tabular}}}}%
  \end{picture}%
\endgroup%

		\caption{A Legendrian representative of the right-handed trefoil.}
		\label{fig:legtref}
	\end{figure}

	The differential equation $x=-dz/dy$ imposes two basic features on front projection: cusps instead of vertical tangencies and overcrossing conditions. To state them more precisely, suppose a Legendrian knot is parametrized via $t \mapsto \sigma(t)=\left(x(t),y(t),z(t)\right)$ where  $\sigma'(t)$ is nowhere-vanishing: 
	
	\begin{enumerate}
		\item Front projections have cusps instead of vertical tangencies. Such points occur when $y'(t)=0$, which forces $z'(t)=0$, in turn implying that $\sigma'(t)$ points in the $\partial/\partial x$ direction whenever $y'(t)=0$.
		\item At crossings (which are required to project to a pair of transverse arcs), the overstrand has more negative slope. This is because the slope $dz/dy$ corresponds to the negative of the $x$-coordinate.
	\end{enumerate}
	It turns out that essentially any planar diagram satisfying these conditions is realized as the projection of a Legendrian knot or link (unique up to Legendrian isotopy). In a related vein, one can prove:
	
	\begin{proposition}
		Every knot can be made Legendrian by a small isotopy.
	\end{proposition}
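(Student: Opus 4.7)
My plan is to use the front projection. Work in $(\R^3,\xi_{\mathrm{std}})$ with contact form $\alpha=dz+x\,dy$, and let $\pi\colon\R^3\to\R^2$ be the projection $(x,y,z)\mapsto(y,z)$. The defining condition for a parametrized curve $\sigma(t)=(x(t),y(t),z(t))$ to be Legendrian is $\alpha(\sigma')=z'(t)+x(t)y'(t)=0$, i.e.\ $x(t)=-z'(t)/y'(t)$ wherever $y'(t)\neq 0$. Thus a Legendrian knot is determined by its front projection together with the recipe $x=-dz/dy$, and conversely any admissible front diagram (no vertical tangencies except cusps, overstrand having the more negative slope at each crossing) lifts uniquely to a Legendrian knot in $\R^3$.

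The strategy is to approximate $K$ by an admissible front diagram whose Legendrian lift is $C^0$-close to $K$. The immediate obstruction is that the $x$-coordinate of the lift is dictated by the slope of the front, while the slopes of a literal projection of $K$ bear no relation to the actual $x$-coordinate of $K$. To overcome this, I would use the standard \emph{zigzag} (stabilization) trick: inserting a small pair of cusps along an edge of the front is a Legendrian move that produces a segment whose instantaneous slope alternates between two prescribed values, while its total displacement in $y$ and $z$ can be made arbitrarily small. By chaining together many small zigzags whose average slope tracks $-x(t)$, one can build a Legendrian arc that remains $C^0$-close to any prescribed smooth arc in $\R^3$.

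Concretely, I would proceed in three steps. First, after a $C^\infty$-small isotopy, arrange that $\pi\circ K$ is a generic immersed curve in the $yz$-plane with only transverse double points and finitely many non-degenerate vertical tangencies. Second, replace each vertical tangency by a small left- or right-pointing cusp, choosing the orientation so that the sign of $-dz/dy$ on either side matches the actual $x(t)$ of $K$ at the tangency. Third, partition $K$ into short arcs on which $x(t)$ is nearly constant, and along each arc replace the front by a sawtooth whose two alternating slopes have average equal to $-x(t)$ and whose total $y$- and $z$-excursion is a chosen small $\varepsilon$; this is possible because the zigzag can be made as thin as we like. At the finitely many double points, a further local zigzag insertion on whichever strand needs the more positive slope restores the overstrand convention without changing the knot type.

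The main obstacle is verifying that the output is genuinely $C^0$-close to $K$ and of the same ambient isotopy type. The closeness follows from a compactness argument: as the partition of $K$ is refined, the zigzag amplitudes may be shrunk uniformly, so the resulting front lies in an arbitrarily small neighbourhood of $\pi\circ K$ and the lifted $x$-coordinate converges uniformly to the original $x(t)$. Preservation of knot type then comes for free, since sufficiently $C^0$-small perturbations of a smooth knot are smoothly isotopic to it (new crossings in the front correspond to the zigzag passing through itself, which is excluded by making $\varepsilon$ smaller than the minimal distance between distinct branches of $K$). This produces a Legendrian knot, isotopic to $K$ through an arbitrarily small isotopy, as required.
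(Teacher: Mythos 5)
The paper does not actually prove this proposition; it only illustrates the procedure on a single example (the figure-eight knot): make the planar projection generic, replace vertical tangencies with cusps, and insert zig-zags at the crossings that violate the overstrand condition. Your proof fills in what the paper leaves tacit, and in particular takes the phrase ``small isotopy'' seriously by arguing for $C^0$-closeness. The core mechanism you use --- zig-zag (stabilization) moves whose slopes approximate $-x(t)$ --- is exactly the standard argument, so your approach is a more careful version of the paper's sketch rather than a different one.

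Two points you should tighten. First, saying the two alternating sawtooth slopes ``have average equal to $-x(t)$'' is not quite the right condition: the $x$-coordinate of the Legendrian lift is $-dz/dy$ on each leg, so to keep the lift $C^0$-close to $K$ you need \emph{both} slopes to lie within a small spread $\delta$ of $-x(t)$, and then you must vary the relative lengths of the two legs so that the net direction of the sawtooth in the $(y,z)$-plane matches the slope of the original front. The amplitude of the teeth and the spread $\delta$ are independent parameters, and both must be sent to zero; making the zig-zag ``thin'' controls only the first. Second, ``sufficiently $C^0$-small perturbations of a smooth knot are smoothly isotopic to it'' is false as a blanket claim (a $C^0$-small perturbation can wildly reknot inside a tiny ball); what saves you here is that your construction produces a curve confined to a tubular neighborhood of $K$ that is isotopic to the core by an explicit straight-line homotopy, not merely $C^0$-close. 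Rephrasing these two steps would make the argument complete. The crossing-fixing step you add at the end is actually redundant once the lift's $x$-coordinate uniformly approximates $x(t)$: the overstrand/understrand relation at each crossing is determined by the sign of $\Delta x$, which is preserved by the approximation.
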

	\begin{example}\label{figure8}
		We illustrate this for the figure-eight knot, also denoted $4_1$, in Figure~\ref{fig:leg-41}. Vertical tangencies are replaced with cusps. Two of the original crossings satisfy the overstrand condition, and we add zig-zags to fix the remaining two crossings.

		\begin{figure}[h!]
			\centering
			\includegraphics[width=0.49\linewidth]{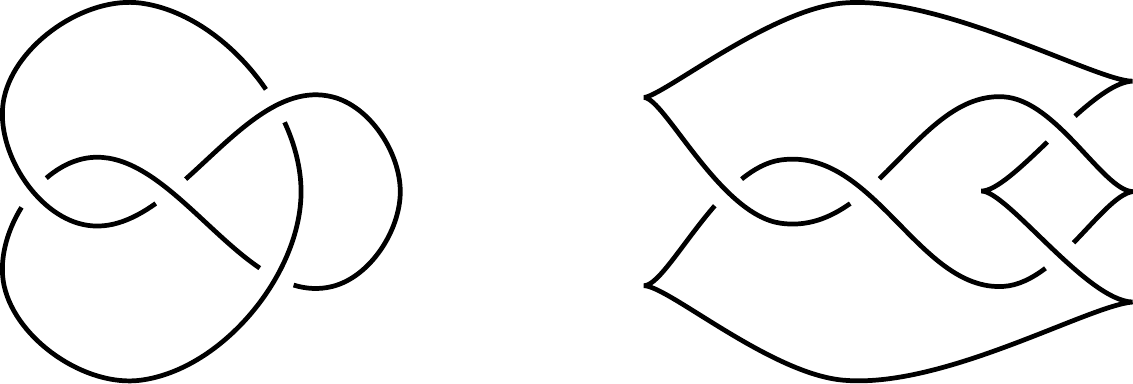}
			\caption{Bringing the figure-eight knot  into Legendrian position.}
			\label{fig:leg-41}
		\end{figure}
		
	\end{example}
		\newpage

	Legendrian knots have two ``classical'' invariants.
	
	\begin{definition}
		Given an oriented Legendrian knot diagram $D$, define
		\begin{itemize}
			\item the \emph{Thurston-Bennequin number} $\tb(D):=\writhe(D)-\frac{1}{2}\#\{\text{cusps}\}$, where the writhe is the difference between the numbers of positive and negative crossings; 
			\item the \emph{rotation number} $\rot(D):=\frac{1}{2}\left(\#\text{\{down cusps\}}-\#\text{\{up cusps\}}\right)$.	\end{itemize}
	\end{definition}
	
	These two quantities have  intrinsic geometric definitions as well, which is convenient for establishing their invariance under Legendrian isotopy (without appealing to a Legendrian version of Reidemeister's theorem). Roughly speaking, the rotation number measures the rotation of the tangent vector to the knot inside the contact planes relative to $\partial/\partial x$, and the Thurston-Bennequin number measures the linking between the knot and a pushoff along a vector field which is transverse to the contact planes.

	\begin{exercise}
		Fix an orientation on the Legendrian trefoil $K$ in Figure~\ref{fig:legtref} and verify that it has $\tb=1$ and $\rot =0$.
	\end{exercise}
	
	\begin{exercise}
		\begin{enumerate}
			\item[\textbf{(a)}] For the various possible orientations, find the Thurston-Bennequin and rotation numbers of the Legendrian unknots in Figure~\ref{fig:unknots}. 
			\item[\textbf{(b)}] In general, how does a change of orientation affect these invariants? 
			\item[\textbf{(c)}] How does the introduction of zig-zags (called \emph{stabilization}) affect these invariants?
		\end{enumerate}
		\begin{figure}
			\centering
			\includegraphics[width=0.4\linewidth]{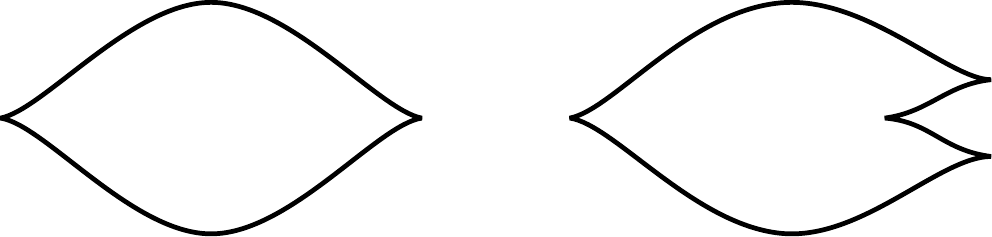}
			\caption{Two diagrams for the unknot with different Thurston-Bennequin and rotation numbers.}
			\label{fig:unknots}
		\end{figure}
	\end{exercise}
	
	These classical invariants can be used to articulate a constraint on the (smooth) slice genus of a knot; the strongest possible constraint is then obtained by minimizing over all Legendrian representatives of a given knot.
	
	\begin{theorem}[slice-Bennequin inequality]\label{thm:slice-Bennequin}
		Given a Legendrian representative of $K$, we have 
		$$2g_4(K)-1\geq \tb(K)+|\rot(K)|,$$
		where $g_4$ denotes the smooth slice genus.
	\end{theorem}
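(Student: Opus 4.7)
The plan is to realize $K$ and any slice surface it bounds as part of a closed embedded surface inside a Stein 4-manifold, then apply an adjunction-type genus bound. Fix a Legendrian representative of $K$ in the standard contact $(S^3,\xi_{\mathrm{std}}) = \partial B^4$, and let $\Sigma \subset B^4$ be a smoothly embedded, oriented slice surface for $K$ realizing $g_4(K)$. By Eliashberg's theorem on Stein handle attachment, the standard Stein structure on $B^4$ extends across a 2-handle $h$ attached along $K$ precisely when the framing is $\tb(K) - 1$; doing so yields a compact Stein surface $X = B^4 \cup h$. Capping $\Sigma$ off with the core disk $D \subset h$ produces a closed, oriented, smoothly embedded surface $\hat\Sigma \subset X$ of genus $g_4(K)$.

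Two standard calculations then drive the argument. First, by the framing-based self-intersection formula underlying Proposition~\ref{prop:linking}, we have $[\hat\Sigma]\cdot[\hat\Sigma] = \tb(K) - 1$. Second, the first Chern class of the almost-complex structure $J$ underlying the Stein structure evaluates on $[\hat\Sigma]$ as $\pm \rot(K)$, with sign depending on the orientation of $K$; geometrically, the rotation number is precisely the obstruction to extending a trivialization of $\xi_{\mathrm{std}}|_K$ over $\hat\Sigma$ as a section of the relevant complex line bundle.

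Feeding these values into the adjunction inequality for Stein 4-manifolds (Lisca-Matic, via Seiberg-Witten theory), which states that any closed, oriented, smoothly embedded surface $F \subset X$ representing a nontrivial homology class satisfies
\[
2g(F) - 2 \ \geq \ [F]\cdot[F] + |\langle c_1(X,J), [F]\rangle|,
\]
we obtain $2g_4(K) - 2 \geq \tb(K) - 1 + |\rot(K)|$, which rearranges to the desired slice-Bennequin inequality.

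The main obstacle is the adjunction inequality itself, a substantial gauge-theoretic input drawing on the same machinery as Kronheimer-Mrowka's proof of the Thom Conjecture (Theorem~\ref{thm:thom}). The secondary ingredients, namely Eliashberg's precise framing constraint for Stein handles and the identification of $\langle c_1, [\hat\Sigma]\rangle$ with $\rot(K)$, are worked out carefully in Gompf-Stipsicz \cite{gompfstipsicz}. A minor caveat is the slice case $g_4(K) = 0$: in some formulations of adjunction the positive-genus hypothesis fails, and one handles this by appealing to the sphere version of adjunction for non-null-homologous classes in Stein surfaces, or by embedding $X$ into a closed K\"ahler surface (Akbulut-Matveyev) and invoking the symplectic Thom Conjecture directly.
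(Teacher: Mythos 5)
Your proof is correct and follows exactly the route the paper outlines and leaves as an exercise: use Eliashberg's Stein handle theorem (Theorem~\ref{thm:stein}) to realize the trace $X_{\tb(K)-1}(K)$ as a Stein domain, cap a minimal-genus slice surface with the $2$-handle's core to obtain a closed surface $\hat\Sigma$ of genus $g_4(K)$ satisfying $[\hat\Sigma]\cdot[\hat\Sigma] = \tb(K)-1$ (via Proposition~\ref{prop:linking}) and $\langle c_1(X), [\hat\Sigma]\rangle = \pm\rot(K)$ (via Theorem~\ref{thm:stein}), then feed these into the adjunction inequality (Theorem~\ref{thm:adjunction}). The caveat you raise about the case $g_4(K)=0$ is reasonable but moot in this paper's framework, since Theorem~\ref{thm:adjunction} as stated requires only that the class be nonzero in $H_2(X)$ and imposes no positive-genus hypothesis.
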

	
	The slice-Bennequin inequality was originally established by Rudolph \cite{rudolph:qp-obstruction} using the Thom conjecture (Theorem~\ref{thm:thom}) \cite{k-m:thom}. We will hint at an alternative proof  further below using the  ``adjunction inequality'' for surfaces in Stein domains.
	
	\begin{example}
		For each orientation, the Legendrian figure-eight knot in Figure~\ref{fig:leg-41} has 
		\begin{align*}
			\tb&= \writhe - \tfrac{1}{2} \# \{\text{cusps}\}=(2-2)-\tfrac{1}{2}(6)=-3
			\\
			\rot & = \frac{1}{2} \left(\#\text{\{down cusps\}}-\#\text{\{up cusps\}}\right) = \frac{1}{2}(3-3)=0.
		\end{align*}
		
		The slice-Bennequin inequality then implies   $2g_4-1\geq -3+|0|$, which reduces to $g_4\geq -1$. In this case, the slice-Bennequin inequality fails to provide a useful obstruction. Fortunately, there are many examples in which the bound is effective and superior to classical tools, such as the slicing obstructions from the Alexander polynomial.
	\end{example}

	\begin{exercise}
		Find a Legendrian representative of the pretzel knot $P(3,-5,-7)$ that has $\tb=1$ and $\rot=0$. \emph{(One solution is given in Figure~\ref{fig:leg-pretzel}.)} Then prove $P(3,-5,-7)$ is not (smoothly) slice. 
	\end{exercise}

	\begin{figure}
		\centering
		\includegraphics[width=0.42\linewidth]{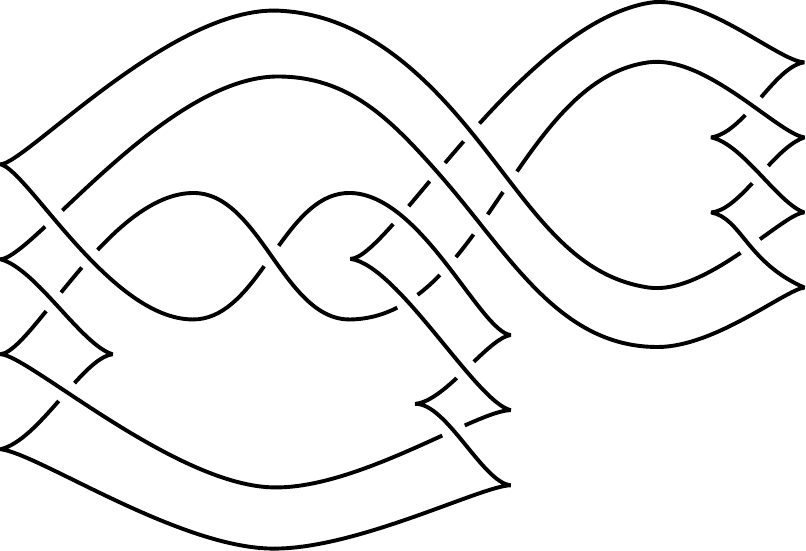}
		\caption{A Legendrian representative of the pretzel knot $P(3,-5,-7)$.}
		\label{fig:leg-pretzel}
	\end{figure}

	\subsection{Stein surfaces}
	Recall Whitney's embedding theorem, which states that any smooth $n$-manifold can be smoothly embedded into $\R^{2n}$. In contrast, many complex manifolds do not admit  holomorphic embeddings into $\C^n$ for any $n$. (For example, embeddings of closed complex manifolds $Z$ are obstructed by the maximum principle for holomorphic maps, since the coordinate projections $Z \hookrightarrow \C^n \to \C$ must be constant.) The class of complex manifolds  that do admit embeddings into some $\C^n$ are called \textit{Stein manifolds}.
	
	\begin{definition}
		A \textit{Stein surface} $Z$ is a complex surface ($\dim_\C Z=2$) admitting a proper holomorphic embedding into $\C^n$ for some $n$.
	\end{definition}
	
	\begin{remark}
		This is not the original definition (which involves the notions of holomorphic convexity and separability), but it is an elegant alternative  characterization.
	\end{remark}
	
	A natural compact analogue is a \textit{Stein domain}. For our purposes, we will view these as the compact 4-manifolds $X$ (with boundary) arising as subsets $Z \cap B^{2n}(r)$ where $Z \subset \C^n$ is a Stein surface and $B^{2n}(r)$ is a ball of radius $r>0$.  The boundary inherits a natural contact structure by considering the complex tangencies. (In particular, at each point $x \in \partial X$, the contact plane $\xi_x$ is the unique plane in the 3-dimensional tangent space $T_x \partial X$ that is fixed under complex multiplication in the complex vector space  $T_x \C^n$.) We also note that manifolds of this form have a natural symplectic structure.
	
	The following important property of Stein domains brings us back to the minimal genus problem considered in the previous lecture.
	
	\begin{theorem}[Adjunction inequality \cite{lisca-matic}]\label{thm:adjunction}
		If $X$ is a Stein domain and $\Sigma\subset X$ is a smoothly embedded surface representing a nonzero class in $H_2(X)$, then
		$$[\Sigma] \cdot [\Sigma]+\big|\langle c_1(X,[\Sigma])\rangle \big|\leq 2g(\Sigma)-2,$$
		where $c_1(X)$ denotes the first Chern class of $X$ as a complex manifold.
	\end{theorem}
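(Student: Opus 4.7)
The strategy is to promote $X$ to a closed symplectic $4$-manifold for which Seiberg--Witten theory provides a basic class, then invoke the generalized adjunction inequality of Kronheimer--Mrowka and restrict the Chern class back to $X$. In spirit this echoes the Symplectic Thom Conjecture: one trades the Stein bound for an inequality on a closed symplectic cap.

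First I would build a closed symplectic $4$-manifold $\overline{X}$ containing $X$ with $b_2^+(\overline{X})>1$. Since $X$ inherits a K\"ahler form from its embedding in $\C^n$, its boundary $\partial X$ is strongly fillable as a contact $3$-manifold, so by the symplectic cap construction of Lisca--Matic (building on Eliashberg and Gompf) one can glue a symplectic cap onto $\partial X$ and close it off. A symplectic fibre sum with, say, an elliptic surface performed away from $X$ then arranges $b_2^+(\overline{X})>1$ without disturbing $\Sigma$ or the Chern class of the Stein structure. Now by Taubes' theorem, the canonical $\mathrm{spin}^c$ structure $\mathfrak{s}_\omega$ on $\overline{X}$ has nonzero Seiberg--Witten invariant, so $c_1(\mathfrak{s}_\omega)=c_1(T\overline{X},J)$ is a basic class, and the Kronheimer--Mrowka generalized adjunction inequality gives
\[
2g(\Sigma)-2 \;\geq\; [\Sigma]\cdot[\Sigma] + \bigl|\bigl\langle c_1(T\overline{X}),\,[\Sigma]\bigr\rangle\bigr|.
\]
Because the inclusion $X\hookrightarrow\overline{X}$ is holomorphic on $X$, we have $c_1(T\overline{X})|_X=c_1(X)$; by naturality of the pairing, the right-hand side equals $[\Sigma]\cdot[\Sigma] + |\langle c_1(X),[\Sigma]\rangle|$, which is the desired inequality.

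The main obstacle is the first step --- the symplectic cap construction. The gauge-theoretic input from Taubes and Kronheimer--Mrowka is a mature black box, so the real difficulty is producing the closed symplectic completion: constructing caps for arbitrary contact $3$-manifolds is delicate (and fails in the holomorphic category, since Stein domains have no closed holomorphic completions), and one must further verify that the fibre-sum stabilization can be done without perturbing $[\Sigma]$ or the Chern number $\langle c_1(X),[\Sigma]\rangle$.
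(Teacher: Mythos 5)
The paper itself does not prove this theorem; it states it with a citation to Lisca--Matic, so there is no internal argument to compare against. That said, your outline follows the accepted route in spirit, and it is worth contrasting with what Lisca and Matic actually did. Rather than glue an abstract symplectic cap to $\partial X$, they show directly that any Stein domain $X$ embeds holomorphically into a closed minimal K\"ahler surface $S$ with $b_2^+(S)>1$ and with $c_1(S)$ restricting to $c_1(X)$; this sidesteps the obstacle you (rightly) flag as the hard part, since no general cap construction is required and the Chern class compatibility is built in. The symplectic-cap route you sketch is legitimate in hindsight (Eliashberg and Etnyre later established that every contact $3$-manifold admits a symplectic cap), but it postdates Lisca--Matic and trades the direct K\"ahler embedding for the extra generality of capping arbitrary contact boundaries.

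The more serious gap is in the step you wave off as a ``mature black box.'' The Kronheimer--Mrowka / Morgan--Szab\'o--Taubes generalized adjunction inequality, in the form you invoke with $c_1$ of the canonical $\mathrm{spin}^c$ structure as a basic class, is proved under the hypotheses $[\Sigma]\cdot[\Sigma]\geq 0$ and $g(\Sigma)\geq 1$. A surface representing a nonzero class in a Stein domain can perfectly well have negative self-intersection (indeed the inequality you are proving forces $[\Sigma]\cdot[\Sigma]\leq 2g(\Sigma)-2$), so the middle step of your argument does not apply as stated. One must either blow up $\overline{X}$ at points of $\Sigma$ and use the Seiberg--Witten blow-up formula to reduce to the nonnegative self-intersection case, or appeal to the Ozsv\'ath--Szab\'o extension of the adjunction inequality to surfaces of arbitrary square, which needs a simple-type hypothesis (supplied here by Taubes' structure theorems for symplectic $4$-manifolds with $b_2^+>1$). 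The genus-zero case also requires a separate short argument, since the standard statement assumes $g\geq 1$. Without one of these repairs, your sketch proves the inequality only for nonnegative-square, positive-genus surfaces, which is strictly weaker than what is claimed.
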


	How do we calculate this in practice? Work of Eliashberg \cite{yasha:stein,yasha:knots} and Gompf \cite{gompf:stein} establishes  a handle theory for Stein domains that admits a convenient diagrammatic framework. For our purposes, we focus on the case of 4-manifolds built with a single 0-handle and some number of 2-handles.
	
	\begin{theorem}[Eliashberg \cite{yasha:stein,yasha:knots}, cf. Weinstein \cite{weinstein}]\label{thm:stein}
		Let $X$ be a 4-manifold obtained by attaching 2-handles to $B^4$ along a framed, oriented Legendrian link $L=\sqcup_1^n K_i$  in the standard contact $S^3$. If the framing on each component $K_i$ equals $\tb(K_i)-1$, then $X$ admits a Stein structure. 
		
		Moreover, the first Chern class $c_1(X) \in H^2(X)$ is determined by the rotation numbers of $L$ via the formula $$\langle c_1(X),h_i\rangle =\rot(K_i),$$
		where $h_i$ is the generator of $H_2(X)$ corresponding to the 2-handle attached along $K_i$ (with orientation).
	\end{theorem}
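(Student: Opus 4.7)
The plan is to prove existence of a Stein structure via the characterization (due to Grauert) of Stein manifolds as those admitting strictly plurisubharmonic exhaustion functions, and then to compute the first Chern class by identifying the obstruction to extending a complex trivialization across each handle. I would start from the standard Stein structure on $B^4\subset \C^2$, given as a sublevel set $\{|z_1|^2+|z_2|^2 \leq 1\}$ of the strictly plurisubharmonic function $\varphi_0=|z_1|^2+|z_2|^2$, and recall that the induced contact structure on $\partial B^4$ is precisely the standard one, namely the field of complex tangencies $\xi_{\text{std}}=T S^3 \cap J(T S^3)$. The goal is then to extend $\varphi_0$ across the 2-handles while preserving strict plurisubharmonicity.

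Next I would construct a local Stein model for a single 2-handle, following Weinstein. On $\C^2$ with coordinates $z_j=x_j+iy_j$, the function $\psi(z_1,z_2)=-x_1^2+\tfrac12 y_1^2 + \tfrac12(x_2^2+y_2^2)$ is strictly plurisubharmonic and has an index-$2$ critical point at the origin whose stable manifold under its Liouville gradient is a Lagrangian disk. The boundary of this disk in nearby contact-type level sets is a Legendrian unknot, and a direct computation shows that the framing induced on this unknot by the core of the resulting handle is precisely $\tb-1$ relative to the Seifert framing. This identifies Weinstein's model handle as a $(\tb-1)$-framed $2$-handle attached along a Legendrian sphere. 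The main obstacle, which is the analytic heart of Eliashberg's theorem, is the gluing step: given a Legendrian link $L\subset \partial X$ in the contact boundary of a Stein domain, the contact neighborhood theorem for Legendrian knots produces a contactomorphism between a neighborhood of each $K_i$ and of the model Legendrian unknot, allowing the model handle to be glued in compatibly; Eliashberg's interpolation argument then extends the strictly plurisubharmonic function across the glued region, modifying it in a collar if necessary but preserving its defining properties. Iterating over the components of $L$ produces the desired Stein structure on $X$.

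For the Chern class formula, I would use the fact that $c_1(X)\in H^2(X)$ is represented by the obstruction to trivializing the complex line bundle $\Lambda^2_{\C}(TX,J)$. Over the $0$-handle, this bundle is trivialized by the holomorphic section $\partial_{z_1}\wedge\partial_{z_2}$, so pairing $c_1(X)$ with the class $h_i$ reduces to computing the winding obstruction encountered when extending this trivialization across the core disk of the $i$-th $2$-handle. In the Weinstein model, the Lagrangian core disk $D$ provides a real frame whose $J$-complexification gives a complex trivialization of $TX|_D$; the obstruction to matching this trivialization with the one inherited from $B^4$ along the attaching region is measured by the winding of a nonvanishing section of the contact plane field $\xi$ along the Legendrian knot $K_i$, relative to the frame given by the Lagrangian disk. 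By the intrinsic definition of the rotation number, this winding equals $\rot(K_i)$, which yields $\langle c_1(X),h_i\rangle = \rot(K_i)$ after checking naturality under the gluing. The last identification, verified in the model handle, is the step I would expect to require the most careful bookkeeping of orientations and framings.
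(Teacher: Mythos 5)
The paper does not prove Theorem~\ref{thm:stein}; it cites Eliashberg and Gompf and moves on to applications. Your proposal follows the standard route from the literature (Weinstein/Eliashberg model handles, the contact neighborhood theorem for the gluing, and the Chern class as the obstruction to extending a trivialization of the canonical line bundle), so the overall architecture is sound and consistent with the references the paper cites.

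However, there is a concrete error in your local model. The function
\[
\psi(z_1,z_2)=-x_1^2+\tfrac12 y_1^2+\tfrac12(x_2^2+y_2^2)
\]
is neither strictly plurisubharmonic nor of Morse index~$2$. Computing the complex Hessian using $\partial_{z}\bar\partial_{\bar z}(x^2)=\partial_{z}\bar\partial_{\bar z}(y^2)=\tfrac12$, the Levi form of $\psi$ has coefficient $-\tfrac12+\tfrac14=-\tfrac14<0$ in the $z_1$ direction, so it is not plurisubharmonic; and the real Hessian has eigenvalues $(-2,1,1,1)$, so the Morse index at the origin is~$1$, not~$2$. A correct index-$2$ strictly plurisubharmonic model in $\C^2$ is, e.g.,
\[
\varphi(z_1,z_2)=x_1^2+x_2^2-\tfrac12\bigl(y_1^2+y_2^2\bigr),
\]
whose Levi form coefficients are $\tfrac12-\tfrac14=\tfrac14>0$ in each $z_j$, whose real Hessian has eigenvalues $(2,-1,2,-1)$ (index~$2$), and whose descending manifold is the Lagrangian plane $\{x_1=x_2=0\}$. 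Beyond this, two claims you assert by fiat would need real work in a complete proof: that the handle framing induced by the model equals $\tb-1$ (this shift is subtle and is precisely where many expositions spend their effort), and that the plurisubharmonic function extends across the glued collar without destroying strict plurisubharmonicity. With the model corrected and those two steps expanded, your outline matches the standard proof structure, including the computation of $\langle c_1(X),h_i\rangle=\rot(K_i)$ via the winding of the Legendrian tangent inside the contact planes.
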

	
	Note that reversing the orientation of $K_i$ has the effect of replacing $h_i$ with $-h_i$.
	
	\begin{example} Since the Legendrian trefoil $K$ in Figure~\ref{fig:legtref} has $\tb(K)=1$, its 0-framed knot trace $X=X_0(K)$ admits a Stein structure. Moreover, since  $\rot(K)=0$, we see that the class $c_1(X) \in H^2(X) \cong \Z$ vanishes. Applying the adjunction inequality, we see that any  smoothly embedded surface $\Sigma \subset X$ representing a nontrivial homology class satisfies $g(\Sigma) \geq 1$ (using the fact that $[\Sigma]\cdot [\Sigma]=0$).
	\end{example}

	\begin{exercise}
		Given a Legendrian knot $K$ in the standard contact $S^3$, use Theorems~\ref{thm:stein} and the adjunction inequality (Theorem~\ref{thm:adjunction}) to establish the slice-Bennequin inequality for $K$ (Theorem~\ref{thm:slice-Bennequin}).
	\end{exercise}
	
	By similar methods we can get constraints on the genera of surfaces in many other 4-manifolds as well (cf.~\cite{akbulut-matveyev}).
	
	{\small
	\bibliographystyle{alpha}
	\bibliography{biblio}}

\newcommand{\etalchar}[1]{$^{#1}$}
\begin{thebibliography}{FdBMNS24}

\bibitem[ACM{\etalchar{+}}23]{acmps}
Paolo Aceto, Nickolas~A Castro, Maggie Miller, JungHwan Park, and Andr{\'a}s
  Stipsicz.
\newblock Slice obstructions from genus bounds in definite 4-manifolds.
\newblock {\em arXiv preprint arXiv:2303.10587}, 2023.

\bibitem[AM97]{akbulut-matveyev}
S.~Akbulut and R.~Matveyev.
\newblock Exotic structures and adjunction inequality.
\newblock {\em Turkish J. Math.}, 21(1):47--53, 1997.

\bibitem[CDGW]{snappy}
Marc Culler, Nathan~M. Dunfield, Matthias Goerner, and Jeffrey~R. Weeks.
\newblock Snap{P}y, a computer program for studying the geometry and topology
  of $3$-manifolds.
\newblock \url{http://snappy.computop.org}.

\bibitem[DKM{\etalchar{+}}24]{dkmps}
Irving Dai, Sungkyung Kang, Abhishek Mallick, JungHwan Park, and Matthew
  Stoffregen.
\newblock The (2,1)-cable of the figure-eight knot is not smoothly slice.
\newblock {\em Inventiones mathematicae}, 238(2):371--390, 2024.

\bibitem[Don83]{donaldson}
S.~K. Donaldson.
\newblock An application of gauge theory to four-dimensional topology.
\newblock {\em J. Differential Geom.}, 18(2):279--315, 1983.

\bibitem[Eli90]{yasha:stein}
Ya. Eliashberg.
\newblock Topological characterization of {S}tein manifolds of dimension
  {$>2$}.
\newblock {\em Internat. J. Math.}, 1(1):29--46, 1990.

\bibitem[Eli93]{yasha:knots}
Ya. Eliashberg.
\newblock {L}egendrian and transversal knots in tight contact $3$-manifolds.
\newblock In {\em Topological methods in modern mathematics (Stony Brook, NY,
  1991)}, pages 171--193. Publish or Perish, Houston, TX, 1993.

\bibitem[FdBMNS24]{singularities:book}
Javier Fern{\'a}ndez~de Bobadilla, Marco Marengon, Andr{\'a}s N{\'e}methi, and
  Andr{\'a}s Stipsicz, editors.
\newblock {\em Singularities and low dimensional topology. {Lecture} notes of
  the spring semester 2023, {Erd{\H{o}}s} {Center}, {Budapest}, {Hungary}
  2023}, volume~30 of {\em Bolyai Soc. Math. Stud.}
\newblock Cham: Springer, 2024.

\bibitem[FM66]{fox-milnor}
Ralph~H. Fox and John~W. Milnor.
\newblock Singularities of {$2$}-spheres in {$4$}-space and cobordism of knots.
\newblock {\em Osaka Math. J.}, 3:257--267, 1966.

\bibitem[FQ90]{freedman-quinn}
Michael~H. Freedman and Frank Quinn.
\newblock {\em Topology of 4-manifolds}, volume~39 of {\em Princeton
  Mathematical Series}.
\newblock Princeton University Press, Princeton, NJ, 1990.

\bibitem[Fre82]{freedman}
Michael~Hartley Freedman.
\newblock The topology of four-dimensional manifolds.
\newblock {\em J. Differential Geom.}, 17(3):357--453, 1982.

\bibitem[Gom98]{gompf:stein}
R.~Gompf.
\newblock Handlebody construction of {S}tein surfaces.
\newblock {\em Ann. of Math. (2)}, 148(2):619--693, 1998.

\bibitem[GS99]{gompfstipsicz}
Robert~E. Gompf and Andr\'{a}s~I. Stipsicz.
\newblock {\em {$4$}-manifolds and {K}irby calculus}, volume~20 of {\em
  Graduate Studies in Mathematics}.
\newblock Amer.~Math.~Society, Providence, RI, 1999.

\bibitem[Kho00]{khovanov00}
Mikhail Khovanov.
\newblock A categorification of the {J}ones polynomial.
\newblock {\em Duke Math. J.}, 101(3):359--426, 2000.

\bibitem[KM61]{kervaire-milnor}
Michel~A. Kervaire and John~W. Milnor.
\newblock On 2-spheres in 4-manifolds.
\newblock {\em Proceedings of the National Academy of Sciences of the United
  States of America}, 47(10):1651--1657, 1961.

\bibitem[KM94]{k-m:thom}
P.~B. Kronheimer and T.~S. Mrowka.
\newblock The genus of embedded surfaces in the projective plane.
\newblock {\em Mathematical Research Letters}, Volume 1(6):797 -- 808, 1994.

\bibitem[kno]{knottheory}
\texttt{KnotTheory`}.
\newblock Mathematica Package, \texttt{katlas.math.toronto.edu}, accessed June
  2013.

\bibitem[LM98]{lisca-matic}
P.~Lisca and G.~Mati{\'c}.
\newblock Stein $4$-manifolds with boundary and contact structures.
\newblock {\em Topology Appl.}, 88:55--66, 1998.

\bibitem[LP72]{LP72}
Fran\c{c}ois Laudenbach and Valentin Po\'{e}naru.
\newblock A note on {$4$}-dimensional handlebodies.
\newblock {\em Bull. Soc. Math. France}, 100:337--344, 1972.

\bibitem[Man16]{manolescu}
Ciprian Manolescu.
\newblock {P}in(2)-equivariant {S}eiberg-{W}itten {F}loer homology and the
  triangulation conjecture.
\newblock {\em Journal of the American Mathematical Society}, 29(1):147--176,
  2016.

\bibitem[OS00]{os:symplectic}
Peter Ozsvath and Zoltan Szabo.
\newblock The symplectic {T}hom conjecture.
\newblock {\em Annals of Mathematics}, 151(1):93--124, 2000.

\bibitem[Pic20]{picc:conway}
Lisa Piccirillo.
\newblock The {C}onway knot is not slice.
\newblock {\em Ann. of Math. (2)}, 191(2):581--591, 2020.

\bibitem[Qui82]{quinn:ends}
Frank Quinn.
\newblock {Ends of maps. III. Dimensions 4 and 5}.
\newblock {\em Journal of Differential Geometry}, 17(3):503 -- 521, 1982.

\bibitem[Ras10]{rasmussen:s}
Jacob Rasmussen.
\newblock Khovanov homology and the slice genus.
\newblock {\em Invent. Math.}, 182(2):419--447, 2010.

\bibitem[Roh52]{rohlin}
Vladimir Rohlin.
\newblock New results in the theory of four-dimensional manifolds.
\newblock {\em Doklady Acad. Nauk. SSSR (N.S.)}, 84:221--224, 1952.

\bibitem[Rud83]{rudolph:braided-surface}
L.~Rudolph.
\newblock Braided surfaces and {S}eifert ribbons for closed braids.
\newblock {\em Comment. Math. Helv.}, 58(1):1--37, 1983.

\bibitem[Rud90]{rudolph:kauffman-bound}
L.~Rudolph.
\newblock A congruence between link polynomials.
\newblock {\em Math. Proc. Cambridge Philos. Soc.}, 107(2):319--327, 1990.

\bibitem[Rud93]{rudolph:qp-obstruction}
L.~Rudolph.
\newblock Quasipositivity as an obstruction to sliceness.
\newblock {\em Bull. Amer. Math. Soc. (N.S.)}, 29(1):51--59, 1993.

\bibitem[Sav12]{saveliev}
Nikolai Saveliev.
\newblock {\em An Introduction to the Casson Invariant}.
\newblock De Gruyter, Berlin, Boston, 2012.

\bibitem[Sch21]{knotjob}
Dirk Sch\"{u}tz.
\newblock {K}not{J}ob.
\newblock Available at \url{http://www.maths.dur.ac.uk/~dma0ds/knotjob.html},
  2021.

\bibitem[Wei91]{weinstein}
Alan Weinstein.
\newblock Contact surgery and symplectic handlebodies.
\newblock {\em Hokkaido Math. J.}, 20(2):241--251, 1991.

\bibitem[{Wol}]{mathematica}
{Wolfram Research{,} Inc.}
\newblock Mathematica, {V}ersion 13.2.
\newblock Champaign, IL, 2022.

\end{thebibliography}
	
\end{document}